\pgfplotsset{compat=1.14}
\newcommand{\Id}{\mathrm{Id}}
\newcommand{\R}{\mathbb{R}}
\renewcommand{\div}{\mathrm{div}}
\newcommand{\G}{\mathcal{G}}
\newcommand{\F}{\mathcal{F}}
\newcommand{\A}{\mathcal{A}}
\newcommand{\N}{\mathbf{N}}
\newcommand{\Ur}{U^r}
\newcommand{\tw}{\Tilde{w}}
\newcommand{\hw}{\hat{w}}
\newcommand{\Ve}{\mathbf{V}^e}
\newcommand{\Vi}{\mathbf{V}^i}
\newcommand{\tiV}{\mathbf{\Tilde{V}}}
\newcommand{\nabs}{\nabla^{\sigma}}
\newcommand{\Xp}{\Dot{X}}
\newcommand{\weing}{\mathrm{L}}
\newcommand{\tub}{\mathcal{T}_0}
\newcommand{\tubp}{\mathcal{T}_1}
\newcommand{\norz}{\mathbf{n}_0}
\newcommand{\Nor}{\mathbf{N}}
\newcommand{\nor}{\mathbf{n}}
\newcommand{\rhop}{\Dot{\rho}}
\newcommand{\sigmap}{\Dot{\sigma}}
\theoremstyle{plain}
\newtheorem{thm}{Theorem}[section]
\newtheorem{lem}[thm]{Lemma}
\newtheorem{prop}[thm]{Proposition}
\newtheorem{cor}[thm]{Corollary}
\theoremstyle{definition}
\newtheorem{defn}{Definition}[section]
\theoremstyle{remark}
\newtheorem{rem}[thm]{Remark}
\title[Shape derivatives for the DtN and cellular protrusion]{Shape derivative for Dirichlet-to-Neumann Operator on a manifold and application to cellular protrusion}
 \author{F.Noisette}
  \date{\today}
\begin{document}

\begin{abstract}
We establish a shape-derivative formula for the Dirichlet-to-Neumann operator on a compact manifold.
Then, we apply this formula to obtain the well-posedness in $H^1$ under a specific Rayleigh-Taylor condition to an equation describing cell protrusions.
This equation is a generalisation of the theoretical part of \cite{Cellule1} to any -2D as well as 3D- surfaces.
\end{abstract}

\maketitle

\tableofcontents

{
\small	
\textbf{\textit{Keywords:}} Dirichlet-to-Neumann operator, Free boundary problems, Shape derivative, Microbiology, Cell protrusions 
}

\section{Introduction}

\subsection{The biological context}

\textit{Invadopodium} is a name given to a phenomenon through which cancerous cells enter the bloodstream.
The cells in question develop a long protrusion, which can fit inside small holes, and allows the cell to gain a foothold there.
The creation of those protrusions is the result of chemical reactions both inside and outside the cell.
The boundary of the cell produces some enzymes that degrades the extra-cellular matrix (ECM) in which the cell lies.
In response to this degradation, a signal propagates in the cell, causing \textit{actin} to polymerize.
When actin polymerizes, it forms long filaments that push the boundary of the cell and create the protrusion.
The model we present here is a simplified model, but with the complexity of the phenomena at stake, it is useful to have toy models, that isolate the theoretical and numerical difficulties.
We refer to \cite{Holmes-Edelstein} for a general survey on cell motility.
A reader willing to investigate the biology of the phenomena in question can read \cite{Bio1} for  experimental examples of cell invadopodia and \cite{Bio2} for experimental examples of cell migrations.

On a mathematical standpoint, following \cite{Cellule1} and \cite{Cellule2}, we adopt a very simple model: the degradation $c^*$ of the ECM as well as the signal $\sigma$ inside the cell are at diffusion equilibrium
\begin{equation*}
\Delta c^* = 0, \quad \text{inside the ECM},
\end{equation*}
and 
\begin{equation*}
\Delta \sigma = 0, \quad \text{inside the cell}.
\end{equation*}
Those elliptics problems are coupled with boundary conditions.
We take as datum of the problem the flux $\Psi$ of (MT1-MMPs), which is the enzyme responsible for the degradation of the ECM:
\begin{equation*}
\partial_n c^* = \Psi, \quad \text{ on the boundary of the cell}.
\end{equation*}
The coupling condition between the interior and the exterior is given by the fact that the signal inside the cell is generated by the perceived degradation of the ECM:
\begin{equation*}
\sigma = c^*, \quad \text{ on the boundary of the cell}.
\end{equation*}
And finally the cell moves according to the signal
\begin{equation*}
v = \nabla\sigma, \quad \text{ on the boundary of the cell}.
\end{equation*}

\begin{figure} \centering  
\includegraphics[scale= 0.5]{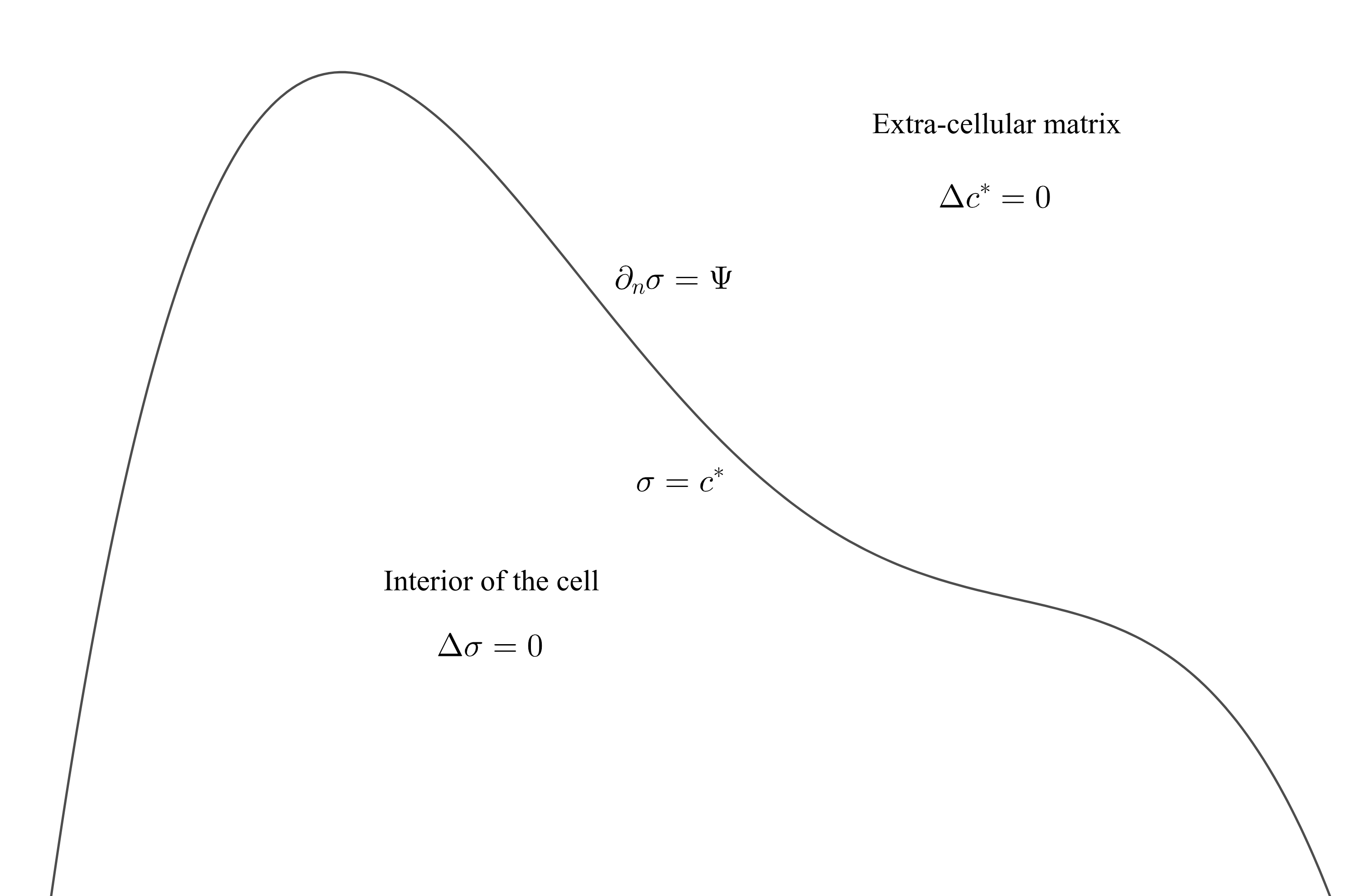}
\caption{Schematic description of the biological model.} \label{fig3} 
\end{figure}

The next logical step in term of modelisation is to introduce the Dirichlet-to-Neumann operator.
This operator associates to a function $\Psi$ on the boundary of a domain the normal derivative of its harmonic extension on this domain:
\begin{subequations}
\begin{align}
\Delta \Phi &= 0,  \\
\Phi_{|\partial\Omega} &= \Psi,
\end{align}
\end{subequations}
and the Dirichlet-to-Neumann operator is given by
\begin{equation}
\G[\Omega]\Psi := \partial_n\Phi, \quad \text{on } \partial \Omega.
\end{equation}
In this paper, the surface of the cell $\Gamma$ will be parameterised in normal-tangential coodirnates as a graph over a reference surface $\Gamma_0$ (see Fig \ref{fig1}).
A given surface correspond to two domains, the interior of the cell $\Omega^i$ and the exterior $\Omega^e$.
Instead of $\G[\Omega^e]$ and $\G[\Omega^i]$, by abuse of notation, we will write $\G^e[\rho]$ and $\G^i[\rho]$, 
\begin{equation*}
\G^e[\rho] := \sqrt{1+|\nabla \rho|^2} \G[\Omega^e],
\quad \text{and} \quad
\G^i[\rho] := \sqrt{1+|\nabla \rho|^2} \G[\Omega^i],
\end{equation*}
where $\rho$ is the parametrisation of $\Gamma$ and $\sqrt{1+|\nabla \rho|^2}$ is a renormalisation due to the fact that one works on the reference domain instead of the real one.
This operator intervenes naturally in our equations since both the signal $\sigma$ and the deterioration $c^*$ are harmonic.
Therefore, the study of our model can be reduced to the study of the Dirichlet-to-Neumann operator as well as its inverse.
The evolution equation  for the parametrization $\rho$ of the boundary of the cell reads -more or less- as 
\begin{equation}\label{e:prot-equa-intro}
\partial_t\rho - \A[\rho](|N|\psi) = 0,
\end{equation}
where $|\Nor| = \sqrt{1+|\nabla \rho|^2}$ is the norm of the non-unit vector used to describe the Dirichlet to Neumann operator and   the operator $\A$ is the composition of the Dirichlet-to-Neumann operator of the inside of the cell with the Neumann-to-Dirichlet operator (its inverse) on the outside of the cell,
\begin{equation}
\A[\rho](\psi) = \G^i[\rho]\left(\left(\G^e[\rho]\right)^{-1}(\psi)\right), 
\end{equation}
and the function $\psi$ is a datum of the problem, namely, the flux on the boundary of enzymes produced by the cell that cause the degradation of the extra cellular matrix.
The difficulty here lies in that the unknown of the equation is the boundary of the cell -or in our case its parametrization $\rho$- and therefore \eqref{e:prot-equa-intro} is completely non-linear and non-local.

Several free boundary problems in fluid mechanic have a structure similar to \eqref{e:prot-equa-intro}.
The most famous one is maybe the water-wave equation in Zhakarov-Craig-Sulem formulation (introduced in \cite{WW-init2} and \cite{WW-init1}), that describes irrotationnal incompressible water waves under the influence of gravity only:
\begin{subequations}
\begin{align}
\partial_t \zeta 
    - \G[\zeta]\psi 
    &= 0, \\
\partial_t\psi 
    + g\zeta
    +\tfrac{1}{2}|\nabla \psi|^2
    -\tfrac{1}{2} \frac{\left(\G[\zeta]\psi + \nabla\zeta\cdot\nabla\psi\right)^2}{1+|\nabla\zeta|^2}
    &= 0,
\end{align}
\end{subequations}
where $\zeta$ is the surface elevation and $\psi$ the potential of the velocity on the surface.
Another equation famously linked to a free boundary problem is the Hele-Shaw equation.
It corresponds to the scenario where the fluid velocity and pressure are related through Darcy's law, namely
\begin{equation}
u = -\nabla (P + gz),
\end{equation}
which combined with the free boundary condition on the free surface gives 
\begin{equation}
\partial_t\zeta + g\, \G[\zeta](\zeta) = 0.
\end{equation}
Finally, we mention the Muskat equation (introduced for the first time in \cite{MuskatInit}), that describe the interface of two fluids with different densities, both obeying Darcy's law.
It writes as follows
\begin{equation}
\partial_t f + \mathcal{L}(f) f = 0,
\end{equation}
where the operator $\mathcal{L}$ is defined -in 3D- through 
\begin{equation}
\mathcal{L}(f)g 
    = -\tfrac{1}{2\pi} \int_{\R^2}{\tfrac{\alpha\cdot \big(\nabla g(x+\alpha)-\nabla g(x)\big)}{\left(|\alpha|^2 + |f(x+\alpha)-f(x)|^2 \right)^{3/2}} d\alpha}.
\end{equation}
Both equations convey the same kind of properties as \eqref{e:prot-equa-intro}: the main operator is non-local, non-linear and not of order $0$.
A good strategy to start in this case is to look at the linearised equation around a fixed state.
We refer to \cite{Lannes} for the case of WW and \cite{QuasiLinMusk3D} for the case of $3D$-Muskat.
Alazard wrote recently a good survey \cite{ParaLinSurvey} on this kind of problematics.

When computing the linearisation of \eqref{e:prot-equa-intro}, the crucial point is to understand the limit 
\begin{equation*}
\lim_{\varepsilon\rightarrow 0}{\tfrac{\A[\rho + \varepsilon\rhop] -\A[\rho]}{\varepsilon}},
\end{equation*}
that is to compute the derivative of $\A$ with respect to the parametrisation of the domain, or \textit{shape derivative} of the operator $\A$.
But as we said, the operator $\A$ is the composition of Dirichlet-to-Neumann on the interior domain with the Neumann-to-Dirichlet operator on the exterior domain.
Thus, this shape derivative can be reduced to the shape derivative of the Dirichlet-to-Neumann operator itself.
This is a problem that was already explored, in the case of the \textit{strip} $\mathcal{S}:=\R^d\times[0,1]$ for water waves (see \cite{Lannes} for the shape derivative and \cite{ParaLinDtN} for the paralinearization, which is the next step at low regularity).
On a generic manifold, the shape derivative of the Dirichlet to Neumann operator writes as follows:
\begin{equation} \label{e:shape-der-form-intro}
\mathrm{d}_{\rho}\G(\rhop)\psi 
    = -\G(\rhop w) 
    - \tfrac{1}{\det(K)}\div_{\Gamma_0}(\rhop \mathbf{V})
    + \rhop\, a_0,
\end{equation}
where $w$, $\mathbf{V}$, $a_0$ and $K$ are functions of $\psi$ and $\rho$, but independent of $\rhop$.
This formula, which is a useful result out of itself,  is similar to the one on $\mathcal{S}$, except for the last term $\rhop\, a_0$, which is due to the curvature of the boundary of the reference domain $\Gamma_0$: the formulas for $w$ and $\mathbf{V}$ are also slightly different for the same reason.

In \cite{Cellule1} and \cite{Cellule2}, Gallinato, Poignard and their co-authors studied  \eqref{e:prot-equa-intro}.
Their aim was mainly to produce numerical simulations for the biological model presented above. 
They also provided a justification for the well-posedness of the linearization of \eqref{e:prot-equa-intro} around the circle.
Our generalization has two new difficulties. 
First of, since we are not working in $2D$, we don't have the tools of complex analysis for integro-differential operators (like Dirichlet-to-Neumann).
Secondly, since we are on a generic manifold, we don't have access to Fourier series.

\subsection{Description of the Chapter and sketch of the proofs}

This Chapter has three main objectives.

We start, in Section \ref{s:shape-der} by computing the shape derivative of the Dirichlet to Neumann operator on a compact manifold.
The Paragraphs \ref{ss:diff-geom}, \ref{ss:adm-diff} and \ref{ss:def-DtN} are devoted to the definition of the Dirichlet to Neumann operator on a moving domain $\Omega$, but seen from a fixed boundary $\Gamma_0$:
in \ref{ss:diff-geom}, we recall the definition of tubular neighborhoods and tangential and normal coordinates and derivatives;
in \ref{ss:adm-diff}, we construct a diffeomorphism in order to work on an artificially fixed domain $\Omega_0$, even if the domain $\Omega$ is not;
in \ref{ss:def-DtN} we construct the Dirichlet-to-Neumann operator.
Then, in Paragraph \ref{ss:shape-der-DtN}, we prove that the Dirichlet-to-Neumann operator is differentiable on the open set $\Ur$, introduced in Paragraph \ref{ss:adm-diff}, and give an explicit formula for its shape derivative in Theorem \ref{t:shape-der-formula}.
The ideas behind the proof of this formula are to use \textit{Alinhac's good unknown}, as well as the fact that, due to harmonicity, even on the boundary, one has $\Delta\Phi = 0$, which allows to cancel normal derivatives with tangential ones.

Then, in Section \ref{s:Lin-Equa}, we linearize \eqref{e:prot-equa-intro} around $0$.
We start by recalling precisely the equation at stake in Paragraphs  \ref{ss:remind-bio} and \ref{ss:evol-equa}.
Then, in Paragraph \ref{ss:lin-equa}, we deduce from Theorem \ref{t:shape-der-formula}, the shape derivative for the operator $\A$ and linearize the equation \eqref{e:prot-equa-intro}.

Finally, in Section \ref{s:wp}, we prove the well-posedness in $H^1(\Gamma_0)$ of the linearized equation \eqref{e:prot-linearised}.
In Paragraph \ref{ss:reg-prop}, we prove some regularity property of $(\Id-\A_0)$, that we use to simplify the skew-symmetric terms of \eqref{e:prot-linearised}.
In Paragraph \ref{ss:L2-est}, we prove the $L^2$ estimate using the coercivity of Dirichlet-to-Neumann.
In Paragraph \ref{ss:H1-est}, we do the same for the $H^1$ estimate, except that we have to avoid some difficulties with regard to commutations.
Finally, in Paragraph \ref{ss:wp}, we prove well-posedness via a Leray scheme using the result of \ref{ss:L2-est} and \ref{ss:H1-est}.


\section{Shape derivative of the Dirichlet-to-Neumann operator} \label{s:shape-der}


\subsection{Reminders of differential geometry} \label{ss:diff-geom}

Let $d\in\mathbb{N}$ and  $\Omega_0 \subset \R^{d+1}$ be a smooth bounded domain.
We denote by $\Gamma_0$ its boundary 
$$\Gamma_0 := \partial\Omega_0.$$
As $\Omega_0$ is smooth, its boundary admits an open \textit{tubular neighborhood} $\tub$, 
that is, there exists a neighborhood of $\Gamma_0$ such that each point $Y\in \tub$ can be uniquely represented as $Y=X+x\norz(X)$, with $X\in \Gamma_0$, $x\in \R$ and $\norz(X)$ the exterior normal vector to $\Gamma_0$ at point $X$.

Let $\Omega$ be a Lipschitz domain, with $\partial\Omega \subset \tub$.
We denote $\Gamma:=\partial\Omega$.
We also assume that $\Gamma$ admits a \textit{normal parametrization} over $\Gamma_0$, that is, there exists a Lipschitz function $\rho$ such that:
\begin{equation*}
\Gamma = \{X+\rho(X)\norz(X) \,;\, X\in \Gamma_0\}.
\end{equation*}
In particular, if we denote by $\varepsilon_0$ the \textit{size} of $\tub$, that is
\begin{equation}
\varepsilon_0 := \min_{X\in \Gamma_0}\sup\{x\in \R_+; X-x\norz(X)\in \tub \text{ and } X+x\norz(X)\in \tub\},
\end{equation}
we have 
\begin{equation}
\forall X\in \Gamma_0, \quad |\rho(X)| \leq \varepsilon_0.
\end{equation}

\begin{figure} \centering  
\includegraphics[scale= 0.8]{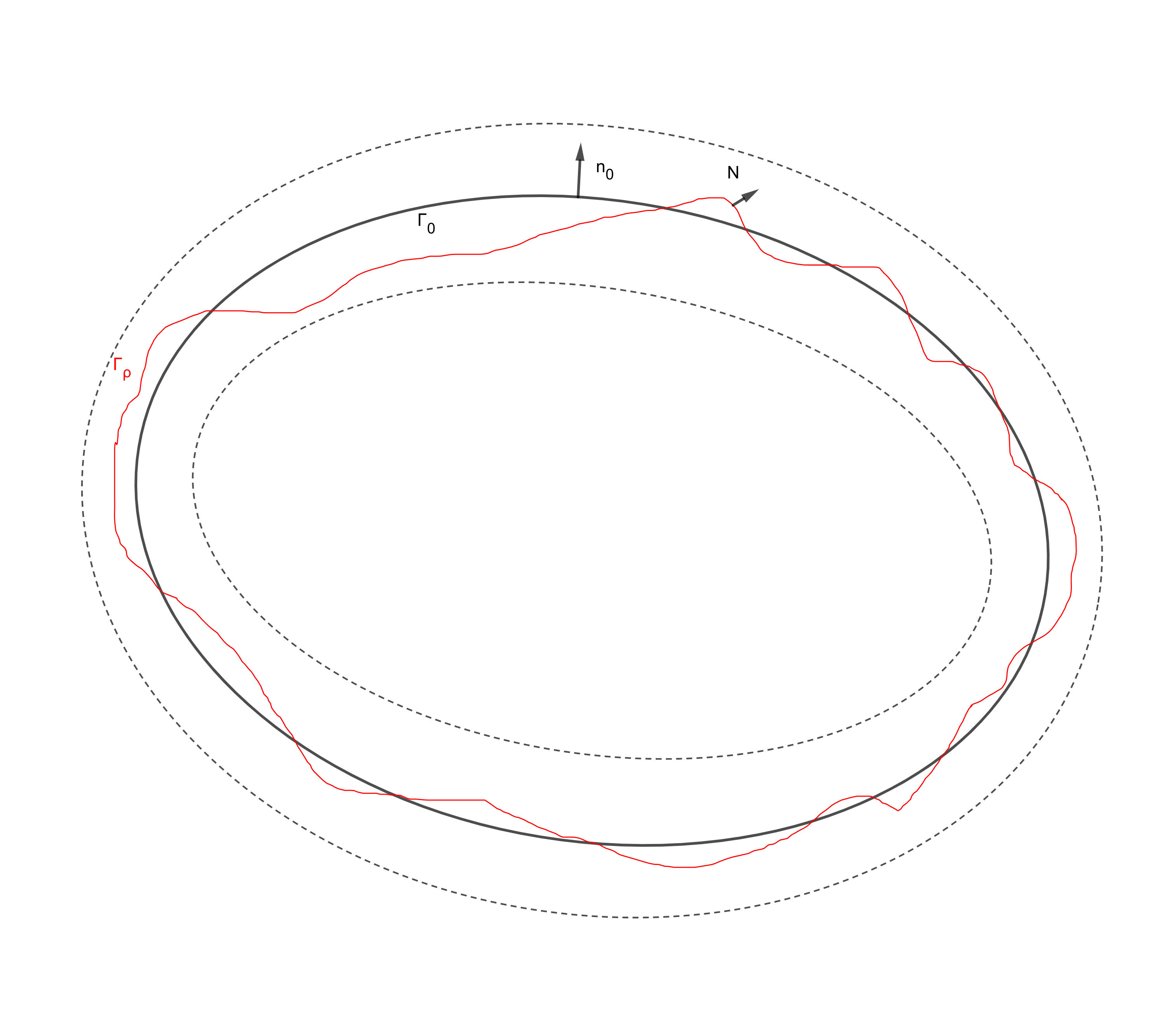}
 \caption{Parametrization of the cell over a reference cell $\Omega_0$:  In black the boundary $\Gamma_0$ of the reference cell, the dotted line represent the boundaries of the tubular neighborhood $\tub$ and the red line represent $\Gamma(\rho)$, the boundary of $\Omega$} \label{fig1} 
\end{figure}

Let us introduce the tangential gradient and divergence operators on the tubular neighborhood $\tub$.

\begin{defn}
Let $f\in C^1(\tub,\R)$ and $\mathbf{v}\in C^1(\tub,\R^{d+1})$, we denote:
\begin{align}
\nabla_{\Gamma_0} f &:= \nabla f - (\norz\cdot \nabla)f\, \norz, \\
\div_{\Gamma_0}(\mathbf{v}) &:= \div(\mathbf{v}) - \mathrm{d}\mathbf{v}(\norz)\cdot \norz.
\end{align}
\end{defn}

\begin{prop}
Let $f,g\in C^1(\tub,\R)$ and $\mathbf{v},\mathbf{w}\in C^1(\tub,\R^{d+1})$, we have the following:
\begin{enumerate}
\item[(i)]
if $f_{|\Gamma_0} = g_{|\Gamma_0}$, then 
\begin{equation}
(\nabla_{\Gamma_0}f)_{|\Gamma_0}
    =(\nabla_{\Gamma_0}g)_{|\Gamma_0},
\end{equation}
\item[(ii)]
if $\mathbf{v}_{|\Gamma_0} = \mathbf{w}_{|\Gamma_0}$, then 
\begin{equation}
(\div_{\Gamma_0}\mathbf{v})_{|\Gamma_0}
    =(\div_{\Gamma_0}\mathbf{w})_{|\Gamma_0}.
\end{equation}
\end{enumerate}
\end{prop}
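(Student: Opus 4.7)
The plan is to reduce both statements to the single observation that the tangential gradient/divergence operators ``see'' only the trace on $\Gamma_0$, which is precisely the purpose of subtracting the normal contribution. By linearity, setting $h := f-g$ in part (i) and $\mathbf{u} := \mathbf{v}-\mathbf{w}$ in part (ii), it suffices to prove: if $h\in C^1(\tub,\R)$ vanishes on $\Gamma_0$, then $(\nabla_{\Gamma_0} h)_{|\Gamma_0} = 0$, and if $\mathbf{u}\in C^1(\tub,\R^{d+1})$ vanishes on $\Gamma_0$, then $(\div_{\Gamma_0}\mathbf{u})_{|\Gamma_0} = 0$.

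For the scalar case, fix $X\in\Gamma_0$ and any tangent vector $\mathbf{t}\in T_X\Gamma_0$. Since $\Gamma_0$ is smooth, there is a $C^1$ curve $\gamma:(-\delta,\delta)\to\Gamma_0$ with $\gamma(0)=X$ and $\gamma'(0)=\mathbf{t}$. The chain rule applied to $h\circ\gamma\equiv 0$ gives $\mathbf{t}\cdot\nabla h(X) = 0$. As $\mathbf{t}$ ranges over $T_X\Gamma_0$, this forces $\nabla h(X)$ to be parallel to $\norz(X)$, i.e.
\begin{equation*}
\nabla h(X) = \bigl(\norz(X)\cdot \nabla h(X)\bigr)\,\norz(X).
\end{equation*}
Substituting this into the definition of $\nabla_{\Gamma_0} h$ yields $(\nabla_{\Gamma_0}h)(X) = 0$.

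For the vector case, I would work in Cartesian coordinates on $\R^{d+1}$. Writing $\mathbf{u} = (u_1,\dots,u_{d+1})$ and $\norz = (n_1,\dots,n_{d+1})$, the definition unpacks as
\begin{equation*}
\div_{\Gamma_0}\mathbf{u} \;=\; \sum_{i} \partial_i u_i \;-\; \sum_{i,j} n_i n_j\, \partial_j u_i .
\end{equation*}
Each component $u_i$ vanishes on $\Gamma_0$, so the scalar case applied to $u_i$ gives $\partial_j u_i(X) = n_j(X)\,(\norz(X)\cdot\nabla u_i(X))$ at every $X\in\Gamma_0$. Plugging this into both sums and using $|\norz|^2=1$ (so $\sum_j n_j^2=1$) makes the two terms identical, proving the cancellation.

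The argument is essentially definitional once the key lemma ``$h_{|\Gamma_0}=0$ implies $\nabla h$ is normal on $\Gamma_0$'' is isolated; there is no genuine obstacle beyond being careful that the tangent-curve construction only needs the $C^1$ regularity of $\Gamma_0$ already granted by the smoothness of $\Omega_0$, and that the coordinate computation for the divergence is performed \emph{pointwise} on $\Gamma_0$ so that the trace relation $\partial_j u_i = n_j(\norz\cdot\nabla u_i)$ may legitimately be used.
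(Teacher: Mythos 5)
Your proof is correct and follows essentially the same route as the paper: both parts reduce to the key observation that a $C^1$ function vanishing on $\Gamma_0$ has a purely normal gradient at every point of $\Gamma_0$, after which the conclusion is definitional. The only cosmetic difference is in part (ii), where you apply this componentwise in Cartesian coordinates and cancel the two sums using $|\norz|^2=1$, whereas the paper notes directly that all tangential directional derivatives of $\mathbf{v}-\mathbf{w}$ vanish, so that $\div(\mathbf{v}-\mathbf{w})$ collapses to the normal--normal term $\mathrm{d}(\mathbf{v}-\mathbf{w})(\norz)\cdot\norz$ that the definition of $\div_{\Gamma_0}$ subtracts off; both computations are equally valid.
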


\begin{proof}
Let us fix $X\in \Gamma_0$ and $u\in T_X\Gamma_0$.
Since $f-g$ and $\mathbf{v}-\mathbf{w}$ are constant equal to zero on $\Gamma_0$, one has:
\begin{equation}
\nabla (f-g)(X) \cdot \mathbf{u} = 0,
\quad \text{and,} \quad
\mathrm{d}_X(\mathbf{v}-\mathbf{w})(\mathbf{u}) \cdot \mathbf{u} =0.
\end{equation}
Therefore 
\begin{equation}
\nabla_{\Gamma_0} (f-g)(X) = 0,
\end{equation}
and 
\begin{equation}
\mathrm{d}_X(\mathbf{v}-\mathbf{w})(\norz) \cdot \norz 
    = \div(\mathbf{v}-\mathbf{w})(X),
\end{equation}
which leads to 
\begin{equation}
\div_{\Gamma_0}(\mathbf{v})(X) = \div_{\Gamma_0}(\mathbf{w})(X).
\end{equation}
\end{proof}

A useful consequence of this proposition is that it allows to define the tangential gradient and divergence for functions and vector fields defined on the boundary $\Gamma_0$.

\begin{defn}
Let $f\in C^1(\Gamma_0,\R)$, we define its tangential gradient $\nabla_{\Gamma_0}f$ as $\left(\nabla_{\Gamma_0}f^{\dagger}\right)_{|\Gamma_0}$, where $f^{\dagger}$ is any extension of $f$ to $\tub$.
Similarly, for $\mathbf{v}\in C^1(\Gamma_0, \R^{d+1})$, its divergence on $\Gamma_0$ is defined as $\div_{\Gamma_0}(\mathbf{v}^{\dagger})_{|\Gamma_0}$, where $\mathbf{v}^{\dagger}$ is any extension of $\mathbf{v}$ to $\tub$.
\end{defn}

One has basic calculus rules on those operators.
In particular, using the fact that 
\begin{equation}
d_X(f\mathbf{v})(\norz(X)) 
    = (\norz(X)\cdot \nabla f(X)) v(X)
     + f(X) d_X\mathbf{v}(\norz(X)),
\end{equation}
one gets
\begin{equation} \label{e:divg-prod}
\div_{\Gamma_0}(f\mathbf{v}) = f\,\div_{\Gamma_0}(\mathbf{v}) + \nabla_{\Gamma_0}f \cdot \mathbf{v},
\end{equation}
and on the boundary, we have
\begin{equation} \label{e:mean-curv}
\div_{\Gamma_0}(f\norz) = H_0 f,
\end{equation}
where $H_0:= \div_{\Gamma_0}(\norz)$ is the \textit{mean curvature} of $\Gamma_0$.

For $X\in\Gamma_0$, we denote by $\Pi_0(X)$ the orthogonal projector onto $T_{X}\Gamma_0$ and $\Pi_0^{\perp}(X)$ the orthogonal projector onto $\R\norz(X)$,
\begin{align}\label{art4-e:Pi0_def}
\Pi_0(X)& :\mathbf{u}\in \R^{d+1}\mapsto \mathbf{u} - (\mathbf{u}\cdot\norz(X))\norz(X) \in  T_{X}\Gamma_0, \\
\Pi_0^{\perp}(X)& :\mathbf{u}\in \R^{d+1}\mapsto (\mathbf{u}\cdot\norz(X))\norz(X) \in  \R\norz(X).
\end{align}
We admit that $\div_{\Gamma_0}$ corresponds to the divergence operator on $\Gamma_0$.
In particular, we have:
\begin{thm}[Stokes]
For $\mathbf{v}\in C^1(\Gamma_0,\R^{d+1})$, one has the formula:
\begin{equation}
\int_{\Gamma_0}{\div_{\Gamma_0}(\Pi_0 \mathbf{v})} =0.
\end{equation}
\end{thm}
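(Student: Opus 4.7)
The plan is to reduce the statement to the classical divergence theorem in $\R^{d+1}$ applied to a thin tubular shell around $\Gamma_0$. The first step is to choose a well-adapted extension $\tilde{\mathbf{u}}$ of $\mathbf{u} := \Pi_0 \mathbf{v}$ to the tubular neighborhood $\tub$. Specifically, I would define it to be constant along normal lines: $\tilde{\mathbf{u}}(X + x\norz(X)) := \mathbf{u}(X)$. This extension has two key features. First, since $\tilde{\mathbf{u}}$ is constant along the flow of $\norz$, the directional derivative $\mathrm{d}\tilde{\mathbf{u}}(\norz)$ vanishes identically on $\tub$, and consequently $\div_{\Gamma_0}(\mathbf{u}) = \div(\tilde{\mathbf{u}})$ on $\Gamma_0$ by definition. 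Second, a standard property of the tubular parametrization is that $\norz(X)$ is the unit normal to the parallel surface $\Gamma_x := \{X + x\norz(X) : X \in \Gamma_0\}$ at the point $X + x\norz(X)$, so the tangency of $\mathbf{u}(X)$ to $\Gamma_0$ transfers, via the extension, to tangency of $\tilde{\mathbf{u}}$ to every $\Gamma_x$ for $|x| < \varepsilon_0$.

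The second step is to apply the classical divergence theorem in $\R^{d+1}$ to $\tilde{\mathbf{u}}$ on the tubular shell $\Omega_\epsilon := \{X + x\norz(X) : X \in \Gamma_0,\, x \in (-\epsilon,\epsilon)\}$ for $0 < \epsilon < \varepsilon_0$. Its boundary is $\Gamma_\epsilon \cup \Gamma_{-\epsilon}$, and by the tangency property above the flux of $\tilde{\mathbf{u}}$ through $\partial \Omega_\epsilon$ is zero, giving $\int_{\Omega_\epsilon} \div(\tilde{\mathbf{u}})\, dY = 0$. Changing to tubular coordinates then yields
\begin{equation*}
\int_{-\epsilon}^{\epsilon}\int_{\Gamma_0} \div(\tilde{\mathbf{u}})(X + x\norz(X))\, J(X,x)\, d\sigma(X)\, dx = 0,
\end{equation*}
where $J$ is the Jacobian of the change of variables, a continuous function with $J(\cdot,0) \equiv 1$. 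Dividing by $2\epsilon$ and letting $\epsilon \to 0^+$ then gives $\int_{\Gamma_0} \div(\tilde{\mathbf{u}})\, d\sigma = \int_{\Gamma_0} \div_{\Gamma_0}(\Pi_0 \mathbf{v}) \, d\sigma = 0$, which is the claim.

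The only step requiring care is the geometric identity that the unit normal to each parallel surface $\Gamma_x$ coincides with $\norz$ transported along normal lines; this follows from differentiating $|\norz|^2 = 1$ combined with the explicit parametrization $\Phi_x(X) = X + x\norz(X)$, but it is the linchpin that makes the boundary flux vanish on the shell. Everything else is a routine application of the Gauss formula together with Lebesgue differentiation in the thin-shell limit.
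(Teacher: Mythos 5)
Your argument is correct, but it is worth noting that the paper does not actually prove this statement: it is \emph{admitted}, on the grounds that $\div_{\Gamma_0}$ coincides with the intrinsic divergence operator of the Riemannian manifold $\Gamma_0$, for which the Stokes formula is classical. Your proof is therefore a genuine addition rather than a variant, and it has the merit of working entirely with the paper's extrinsic definition of $\div_{\Gamma_0}$ (divergence of an ambient extension minus the normal--normal component), bypassing the identification with the intrinsic divergence altogether. All the steps check out: the normal-constant extension $\tilde{\mathbf{u}}$ satisfies $\mathrm{d}\tilde{\mathbf{u}}(\norz)=0$, so $\div(\tilde{\mathbf{u}})=\div_{\Gamma_0}(\Pi_0\mathbf{v})$ on $\Gamma_0$ by the paper's definition; the tangency of $\tilde{\mathbf{u}}$ to each parallel surface $\Gamma_x$ holds because the Weingarten map $\weing(X)=d_X\norz$ preserves $T_X\Gamma_0$ (a consequence of $|\norz|^2=1$), so $T_{X+x\norz(X)}\Gamma_x = (\Id+x\weing(X))(T_X\Gamma_0)=T_X\Gamma_0$ and the unit normal to $\Gamma_x$ is still $\norz(X)$; since $\Gamma_0=\partial\Omega_0$ is compact without boundary, the shell $\Omega_\epsilon$ has no lateral boundary and the total flux vanishes; and the thin-shell limit is licit because the integrand $x\mapsto\int_{\Gamma_0}\div(\tilde{\mathbf{u}})(X+x\norz(X))\det\left(\Id+x\weing(X)\right)d\sigma(X)$ is continuous with value $\int_{\Gamma_0}\div_{\Gamma_0}(\Pi_0\mathbf{v})$ at $x=0$. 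The one point you should make explicit if you include this is the regularity of the extension: $\Pi_0$ and the tubular chart are smooth because $\Omega_0$ is smooth, so $\tilde{\mathbf{u}}\in C^1(\tub,\R^{d+1})$ and the divergence theorem applies on $\Omega_\epsilon$.
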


Let us also denote by $\partial_{\norz}$ the normal derivative to $\Gamma_0$
\begin{equation}
\partial_{\norz}f := (\norz\cdot\nabla)f.
\end{equation}
We denote by $\weing$ the Weingarten map:
\begin{equation}\label{e:def-weingarten}
\forall X\in \Gamma_0, \forall H\in T_X\Gamma_0, \qquad 
\weing(X)(H) := d_X\norz(H).
\end{equation}

\subsection{Construction of an admissible diffeomorphism} \label{ss:adm-diff}

The goal of this paragraph is to construct (under suitable hypothesis) an \textit{admissible} diffeomorphism $\sigma$ sending $\Omega_0$ to $\Omega$. 
The conditions to be admissible, listed in Definition \ref{d:adm-diffeo}, are here to ensure that the elliptic problem with the laplacian on $\Omega$ can be transported on an elliptic problem on $\Omega_0$ via $\sigma$.
There is also an additional condition on the neighborhood of the boundary to ensure that all the computation later on run smoothly.

\begin{defn} \label{d:adm-diffeo}
Let $r\geq 1$ be an integer.
Let $\rho\in W^{r,\infty}(\Gamma_0)$ and $\Omega$ be the domain associated with $\rho$. 
We say that a diffeomorphism $\sigma : \Omega_0 \rightarrow \Omega$ is admissible when the following conditions are satisfied
\begin{enumerate}
\item 
there exists a neighborhood $\tubp\subset \tub$ of $\Gamma_0$ such that 
\begin{equation} \label{e:diff-bound-val}
\forall X+x\norz(X)\in \tubp, \quad
 \sigma(X+x\norz(X)) = X+ ((1+x)\rho(X)+x)\norz(X),
\end{equation}
\item there exists a constant $c_0 >0$ such that $\det(d\sigma)$ is uniformly bounded from below by $c_0$,
\item there exists a constant $M$ such that $\Vert d\sigma\Vert_{\mathcal{L}(\R^{d+1},\R^{d+1})}$ is uniformly bounded by $M$.
\end{enumerate}

We say that a domain is admissible when it admits a parametrisation $\rho \in \Ur(\Gamma_0)$.
\end{defn}

\begin{rem}
The condition \eqref{e:diff-bound-val} ensures us in particular that the value of $\sigma$ in a neighborhood of the boundary $\Gamma_0$ does not depends on the choice of $\sigma$.
\end{rem}

\begin{defn} \label{d:adm-param}
We denote by $\varepsilon_0$ the size of the tubular neighborhood $\tub$.
Without loss of generality, we assume that $\varepsilon_0\leq \tfrac{1}{2} < 1$.
We say that a function $\rho\in W^{r,\infty}(\Gamma_0)$ belongs to $\Ur(\Gamma_0)$ when 
\begin{equation}
\Vert \rho\Vert_{L^{\infty}}
    < \min \{\tfrac{\varepsilon_0}{6}, \tfrac{1}{18\Vert \weing\Vert_{L^{\infty}}}\},
\quad \text{ and } \quad
\Vert \rho\Vert_{W^{1,\infty}} 
    < 1.
\end{equation}
\end{defn}

The meaning behind Definition \ref{d:adm-param} is that such a parametrisation allows us to construct admissible diffeomorphism as shown in Proposition \ref{p:adm-diff-constr} below.

\begin{prop} \label{p:adm-diff-constr}
Let $\rho\in\Ur(\Gamma_0)$.
There exists an admissible diffeomorphism $\sigma : \Omega_0\rightarrow \Omega$ of class~$W^{r,\infty}$.
\end{prop}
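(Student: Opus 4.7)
The strategy is to construct $\sigma$ explicitly by cutting off in the normal direction the extension of $\rho$ already prescribed by formula \eqref{e:diff-bound-val}, gluing to the identity outside a thin tubular collar, and then to verify the three admissibility conditions by direct Jacobian estimates. The numerical constants built into Definition \ref{d:adm-param} are precisely what guarantees that $\sigma$ is a diffeomorphism with controlled derivatives.

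\textbf{Construction.} Fix a smooth function $\chi:\R\to[0,1]$, depending only on $\varepsilon_0$, with $\chi\equiv 1$ on $[-\varepsilon_0/6,\varepsilon_0/6]$ and $\supp \chi\subset[-\varepsilon_0/3,\varepsilon_0/3]$. Using the tubular coordinates $Y=X+x\norz(X)$ on $\tub$, set
\begin{equation*}
\sigma(Y):=Y+\chi(x)(1+x)\rho(X)\norz(X) \quad\text{for } Y\in\tub,
\end{equation*}
and $\sigma(Y):=Y$ for $Y\in\Omega_0\setminus\tub$. Since $\chi$ vanishes identically for $|x|\geq \varepsilon_0/3$, the two pieces glue to a $W^{r,\infty}$ map on $\Omega_0$, whose regularity is inherited from that of $\rho$ together with the smoothness of $\chi$ and of the tubular chart. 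Choosing $\tubp:=\{X+x\norz(X):|x|<\varepsilon_0/6\}$, condition (1) of Definition \ref{d:adm-diffeo} holds by construction.

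\textbf{Jacobian estimates and bijectivity.} In tubular coordinates, factor $\sigma=\Psi\circ\Theta\circ\Psi^{-1}$ with $\Psi(X,x)=X+x\norz(X)$ and $\Theta(X,x)=(X,\,x+\chi(x)(1+x)\rho(X))$. The differential $d\Psi$ has block form $(\Id_{T_X\Gamma_0}-x\weing(X))\oplus \norz(X)$, uniformly invertible on the support of $\chi$ thanks to $\|\weing\|_{L^\infty}\|\rho\|_{L^\infty}<1/18$ and $\varepsilon_0\leq 1/2$. A direct computation yields
\begin{equation*}
\det d\Theta \;=\; 1+\chi'(x)(1+x)\rho(X)+\chi(x)\rho(X),
\end{equation*}
while the only nonzero off-diagonal entry of $d\Theta$ is $\chi(x)(1+x)\nabla_{\Gamma_0}\rho(X)$. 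Using $\|\chi'\|_{L^\infty}\lesssim 1/\varepsilon_0$ combined with $\|\rho\|_{L^\infty}<\varepsilon_0/6$ and $\|\rho\|_{W^{1,\infty}}<1$, one obtains a uniform lower bound $\det d\Theta \geq c_0 >0$ and a uniform upper bound on $\|d\Theta\|$; conjugation by $d\Psi$ transfers both bounds to $d\sigma$, giving conditions (2) and (3). For bijectivity, fix $X\in\Gamma_0$: the real map $x\mapsto x+\chi(x)(1+x)\rho(X)$ is strictly increasing (its derivative is bounded below by the same estimate), sends $0$ to $\rho(X)$ and preserves the sign of $x$ on $\supp\chi$. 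Together with $\sigma\equiv\Id$ outside $\tub$ and the fact that $\Gamma_0,\Gamma\subset\tub$, this gives $\sigma(\Gamma_0)=\Gamma$ and $\sigma(\Omega_0)=\Omega$; the inverse function theorem then upgrades this to a diffeomorphism.

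\textbf{Main obstacle.} The delicate part is the Jacobian estimate, where two competing constraints must be met simultaneously: the cut-off must transition over a scale comparable to $\varepsilon_0$, so $\|\chi'\|_\infty\sim 1/\varepsilon_0$ and the term $\chi'(x)(1+x)\rho$ forces $\|\rho\|_{L^\infty}/\varepsilon_0$ to be small; at the same time the Jacobian $\Id-x\weing$ of the tubular chart must stay uniformly invertible on a neighborhood containing $\supp\chi$. The explicit constants $1/6$ and $1/(18\|\weing\|_{L^\infty})$ in Definition \ref{d:adm-param} are tailored so that both constraints are satisfied once the universal cut-off $\chi$ has been fixed, after which the rest of the argument is routine matrix arithmetic.
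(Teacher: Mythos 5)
Your construction follows the same route as the paper's: an explicit formula in tubular coordinates, a cutoff in the normal variable interpolating between the prescribed boundary behaviour $(1+x)\rho(X)+x$ and the identity, a lower bound on the Jacobian determinant, and strict monotonicity of the scalar map $x\mapsto x+\chi(x)(1+x)\rho(X)$ to get injectivity. The organisation via $\sigma=\Psi\circ\Theta\circ\Psi^{-1}$ is only a cosmetic repackaging of the paper's direct computation of $d\sigma$, whose determinant factors as $\det\left(\Id+(\delta(x)\rho(X)+x)\weing(X)\right)\,(\delta'(x)\rho(X)+1)$; your two factors $\det d\Psi$ and $\det d\Theta$ play exactly these roles. (Minor point: the tangential block of $d\Psi$ is $\Id+x\weing(X)$, not $\Id-x\weing(X)$.)

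The step you rightly identify as the main obstacle, however, does not close with the constants you chose, for two reasons. First, your transition band $[-\varepsilon_0/3,-\varepsilon_0/6]$ has length $\varepsilon_0/6$, so any smooth $\chi$ going from $0$ to $1$ across it has $\Vert\chi'\Vert_{L^\infty}>6/\varepsilon_0$. Paired with $\Vert\rho\Vert_{L^\infty}<\varepsilon_0/6$, this only gives $|\chi'(x)(1+x)\rho(X)|<\Vert\chi'\Vert_{L^\infty}\varepsilon_0/6$, a quantity that for a generic admissible $\chi$ exceeds $1$; there is then no margin left to absorb the additional term $\chi(x)\rho(X)$, and the claimed uniform bound $\det d\Theta\geq c_0>0$ is not established (your own justification, "$\Vert\chi'\Vert_{L^\infty}\lesssim 1/\varepsilon_0$", yields only an unspecified constant divided by $6$, not something below $1$). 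Second, the uniform invertibility of $\Id+x\weing(X)$ for $|x|\leq\varepsilon_0/3$ is attributed to $\Vert\weing\Vert_{L^\infty}\Vert\rho\Vert_{L^\infty}<1/18$ and $\varepsilon_0\leq 1/2$, but neither hypothesis controls $\varepsilon_0\Vert\weing\Vert_{L^\infty}$, which is the relevant quantity. The paper resolves both difficulties simultaneously by shrinking the band to width $\varepsilon_1:=\min\left(\varepsilon_0,\tfrac{1}{3\Vert\weing\Vert_{L^\infty}}\right)$ and imposing $\delta'\leq 3/\varepsilon_1$: then $|x|\,\Vert\weing\Vert_{L^\infty}\leq 1/3$ on the band, and
\begin{equation*}
|\delta'(x)\rho(X)|
\;\leq\; \max\left(\tfrac{3}{\varepsilon_0},\,9\Vert\weing\Vert_{L^{\infty}}\right)\,
\min\left(\tfrac{\varepsilon_0}{6},\,\tfrac{1}{18\Vert\weing\Vert_{L^{\infty}}}\right)
\;\leq\;\tfrac{1}{2},
\end{equation*}
which is precisely what the two-term minimum in Definition \ref{d:adm-param} is designed for; your write-up uses only the $\varepsilon_0/6$ half of that minimum in the determinant estimate. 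The fix is to let your cutoff vary over a band of width comparable to $\varepsilon_1$ rather than $\varepsilon_0/6$, after which the rest of your argument goes through as in the paper.
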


\begin{proof}
We denote by $\varepsilon_1 := \min\left(\varepsilon_0,\tfrac{1}{3\Vert \weing\Vert_{L^{\infty}}}\right)$.
Let $\delta \in C^{\infty}([-\infty, 0])$ be a non-decreasing cutoff function verifying
\begin{equation}
\forall x\in [-\varepsilon_1/2,0], \, \delta(x) = x+1,
\quad \text{and} \quad
\forall x\in [-\infty, -\varepsilon_1], \, \delta(x) = 0.
\end{equation}
Moreover, we assume that 
\begin{equation*}
\delta' \leq \tfrac{3}{\varepsilon_1}.
\end{equation*}
This function allows us to construct the function $\sigma:\Omega_0\rightarrow\Omega$ through
\begin{subequations} \label{e:syst-diffeo-def}
\begin{align}
\sigma_{|\Omega_0\setminus (\Omega_0\cap \tub)} &= \Id, \\
\forall X+x\norz(X)\in \Omega_0\cap \tub, \quad \sigma(X+x\norz(X)) &= X+ (\delta(x)\rho(X)+x)\norz(X).
\end{align}
\end{subequations}
We  differentiate this expression with respect to $X+x\norz(X)$:
\begin{align*}
d_{X+x\norz(X)}\sigma(H+h\norz(X))
    =& H
       + (\delta(x)\rho(X)+x)d_{X}\norz(H) \\
     & + (\nabla_{\Gamma_0}\rho(X)\cdot H)\norz(X)
       + h(\delta'(x)\rho(X)+1)\norz(X).
\end{align*}
In particular, the determinant of $d\sigma$ is given by:
\begin{equation}
\det(d\sigma)
    = \det\left(\Id+(\delta(x)\rho(X)+x)\weing(X)\right) \, \left( \delta'(x)\rho(X)+1 \right).
\end{equation}
Then, due to the hypothesis on $\varepsilon_1$ and $\rho$, one has, for $X\in\Gamma_0$ and $x\in [-\varepsilon_1,0]$:
\begin{align*}
\Vert (\delta(x)\rho(X)+x)L(X)\Vert
    &\leq (\varepsilon_1+ \Vert\rho\Vert_{L^{\infty}})\Vert L\Vert_{L^{\infty}} \\
    &\leq \tfrac{2}{3} <1,
\end{align*}
and 
\begin{align*}
|\delta'(x)\rho(X)|
    &\leq \tfrac{3}{\varepsilon_1}\Vert\rho\Vert_{L^{\infty}} \\
    &\leq \max\left(\tfrac{3}{\varepsilon_0},9\Vert\weing\Vert_{L^{\infty}}\right)
        \min\left(\tfrac{\varepsilon_0}{6},\tfrac{1}{18\Vert\weing\Vert_{L^{\infty}}}\right) \\
    &\leq \frac{1}{2} <1.
\end{align*}
Therefore for $x\in[-\varepsilon_1,0]$, we have $\det(d\sigma)\geq \tfrac{1}{6}>0$ bounded by below.

Moreover, if we denote by $\underline{\delta}_X$ the function
\begin{equation}
\underline{\delta}_X : x\mapsto \delta(x)\rho(X) +x,
\end{equation}
we just proved that its derivative is positive.
Hence this function is invertible, therefore so is $\sigma$ and in $\sigma(\Omega_0\cap\tub)$, one has:
\begin{equation}
\sigma^{-1}(X+y\norz(X)) = X + \underline{\delta}_X^{-1}(y) \norz(X).
\end{equation}

Since $\rho$ belongs to $W^{1,\infty}$, so does $\sigma$, which ensures that $d\sigma$ is uniformly bounded.
\end{proof}

Due to the fact that we have an explicit formula for admissible diffeomorphisms near $\Gamma_0$, one can compute their gradient and inverse explicitly in this neighbourhood as well.
The proof is straightforward and omitted.

\begin{prop}
Let  $r\in \N$, with $r\geq 2$ and $\rho \in \Ur$.
Let $\sigma$ be an admissible diffeomorphism and $\tubp$ an associated tubular neighborhood.
Let us introduce the application $K$ through:
\begin{equation} \label{e:def-K}
K: X+x\norz(X) \in\tubp \mapsto \mathrm{Id}_{T_X\Gamma_0}+((x+1)\rho(X)+x) \weing(X) \in \mathcal{L}(T_X\Gamma_0).    
\end{equation}

Then one has
\begin{enumerate}
\item[(i)] for $X+y\norz(X)\in \sigma(\tubp)$, one has
\begin{equation} \label{e:sigmaInvForm}
\sigma^{-1}(X+y\norz(X)) 
    = X+\tfrac{y-\rho(X)}{1+\rho(X)}\norz(X),
\end{equation}
\item[(ii)] for $X+x\norz(X)\in\tubp$, one can now write $d\sigma$ matricially by block
\begin{equation}
d_{X+x\norz(X)}\sigma 
    = 
    \begin{pmatrix}
      K                               & 0 \\
      (x+1)\,^t\nabla_{\Gamma_0}\rho  & 1+\rho
    \end{pmatrix},
\end{equation}
in the decomposition $\R^{d+1}= T_{X}\Gamma_0 \oplus \R \norz(X)$,
\item[(iii)] 
the linear application $K$ is invertible and one gets:
\begin{equation}
(d_{X+x\norz(X)}\sigma)^{-1}(H+h\norz) 
    = K^{-1}H
     - \tfrac{x+1}{1+\rho} \left(\nabla_{\Gamma_0}\rho \cdot K^{-1}H\right)\norz
     + \tfrac{1}{1+\rho} h\norz.
\end{equation}

One can also compute the transposition $\,^t(d\sigma)^{-1}$
\begin{equation} \label{e:FormulaDSigInv}
\,^t(d_{X+x\norz(X)}\sigma)^{-1} = \,^tK^{-1}H + \tfrac{h}{1+\rho}\left(\norz-(x+1)\,^tK^{-1}\nabla_{\Gamma_0}\rho\right).
\end{equation}
\end{enumerate}
\end{prop}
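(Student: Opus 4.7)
The proof is purely computational and consists of unpacking the explicit formula
$$\sigma(X+x\norz(X)) = X + ((x+1)\rho(X)+x)\norz(X)$$
valid on $\tubp$ (which is condition (1) of Definition \ref{d:adm-diffeo} with $\delta(x)=x+1$ there). My plan is to handle the four assertions one by one.

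For (i), I would observe that the formula above shows $\sigma$ maps every normal line $\{X + x\norz(X): x \in \R\}\cap\tubp$ into itself. Hence if $X+y\norz(X)\in\sigma(\tubp)$, its preimage must be of the form $X+x\norz(X)$ with the \emph{same} base point $X$. The scalar equation $y = (x+1)\rho(X)+x$ is linear in $x$, and inverting it gives $x = \tfrac{y-\rho(X)}{1+\rho(X)}$; here $1+\rho > 0$ because $\|\rho\|_{L^\infty} < \tfrac{1}{2}$ by Definition \ref{d:adm-param}.

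For (ii), I would differentiate $\sigma$ directly in the decomposition $\R^{d+1}=T_X\Gamma_0\oplus\R\norz(X)$. An infinitesimal displacement $H+h\norz(X)$ corresponds to varying the base point in direction $H\in T_X\Gamma_0$ and the normal coordinate in direction $h$. Three objects vary: the identity term $X$ gives $H$; the scalar $(x+1)\rho(X)+x$ varies by $h(1+\rho(X)) + (x+1)(\nabla_{\Gamma_0}\rho\cdot H)$; and $\norz(X)$ varies by $\weing(X)(H)\in T_X\Gamma_0$ via \eqref{e:def-weingarten}. Regrouping tangential and normal contributions yields
\begin{equation*}
d_{X+x\norz}\sigma(H+h\norz) = \bigl[H + ((x+1)\rho+x)\weing(H)\bigr] + \bigl[(x+1)\nabla_{\Gamma_0}\rho\cdot H + h(1+\rho)\bigr]\norz,
\end{equation*}
which is exactly the block matrix of (ii) with $K$ as in \eqref{e:def-K}.

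For (iii), I first note that $K$ is invertible on $\tubp$: by Definition \ref{d:adm-param} and the choice of $\tubp$, the operator norm of $((x+1)\rho+x)\weing$ is strictly less than $1$ (this is exactly the estimate carried out in the proof of Proposition \ref{p:adm-diff-constr}), so $K = \Id + ((x+1)\rho+x)\weing$ is a small perturbation of the identity. Then I invert the block lower-triangular matrix via the standard formula
\begin{equation*}
\begin{pmatrix} K & 0 \\ (x+1)\,^t\nabla_{\Gamma_0}\rho & 1+\rho \end{pmatrix}^{-1}
= \begin{pmatrix} K^{-1} & 0 \\ -\tfrac{x+1}{1+\rho}\,^t\nabla_{\Gamma_0}\rho\cdot K^{-1} & \tfrac{1}{1+\rho} \end{pmatrix},
\end{equation*}
apply it to $H+h\norz$ to recover the formula for $(d\sigma)^{-1}$, and finally transpose (swapping the off-diagonal block to the upper right) to obtain \eqref{e:FormulaDSigInv}.

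There is no real obstacle: the only subtlety is bookkeeping, namely remembering that $\weing$ takes values in $T_X\Gamma_0$ so that it sits in the tangential diagonal block, and that the $(x+1)$ factor on the off-diagonal entry comes from differentiating the product $(x+1)\rho(X)$ with respect to $X$ while keeping $x$ fixed. This is why the author flags the result as routine and omits the details.
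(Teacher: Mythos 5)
Your proof is correct and follows exactly the route the paper intends: the paper omits the argument as routine, but the differential computation you perform is the one already carried out in the proof of Proposition \ref{p:adm-diff-constr} (specialised to $\delta(x)=x+1$, $\delta'(x)=1$ on $\tubp$, which in fact fixes the missing $\delta(x)$ factor in front of $\nabla_{\Gamma_0}\rho\cdot H$ in that earlier display), and the invertibility of $K$ plus the block lower-triangular inversion and transposition are the same bookkeeping the author has in mind. Nothing is missing.
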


\subsection{Definition of the Dirichlet-to-Neumann operator} \label{ss:def-DtN}


The Dirichlet to Neumann operator on a domain $\omega$ is the operator sending a function $\psi \in H^{1/2}(\partial\omega)$ on to the normal derivative of the harmonic extension $\psi^{\mathrm{h}}$ of $\psi$:
\begin{subequations}
\begin{align}
\Delta \psi^{\mathrm{h}} &= 0, \quad \text{on } \omega, \\
\psi^{\mathrm{h}} &= \psi, \quad \text{on } \partial\omega,
\end{align}
\end{subequations}
and 
\begin{equation}
\G[\omega](\psi) := \partial_{\mathbf{n}} \psi^{\mathrm{h}}. 
\end{equation}
In this paragraph, we give a slightly different definition.
The operator that we consider, denoted $\G[\rho]$, will be more or less the pull back of $\G[\Omega(\rho)]$ on to the boundary $\Gamma_0$ of $\Omega_0$. \\

Let $r\in \mathbb{N}_{\geq 2}$ and $\rho\in U^r(\Gamma_0)$.
We fix a diffeomorphism $\sigma$ associated with $\rho$.

Let $f:\Omega_0\rightarrow \R$.
We introduce the transported gradient of $f$ via $\sigma$, denoted $\nabs$, by:
\begin{align*}
\nabs f 
    &:= \nabla (f\circ \sigma^{-1}) \circ \sigma \\ 
    &=\, ^t(d\sigma)^{-1} \, \nabla f            \\
    &=\, ^tK^{-1}\nabla_{\Gamma_0}f + \tfrac{\norz\cdot \nabla f}{1+\rho} (\norz - (x+1)\,^tK^{-1}\nabla_X \rho).
\end{align*}

We introduce $P(\sigma)$ the following matrix:
\begin{equation}
P(\sigma) := |\det(d\sigma)|\, (d\sigma)^{-1}\, ^t(d\sigma)^{-1}.
\end{equation}
Let us remark that for $\rho \in W^{r+1,\infty}(\Gamma_0)$, the matrix $P$ is in $W^{r,\infty}(\Gamma_0)$.
Moreover, for $\rho \in U^r(\Gamma_0)$, the matrix $P(\sigma)$ is uniformly coercive on $\Omega_0$.

Let $\psi\in H^{\tfrac{3}{2}}(\Gamma_0)$.
We introduce $\phi = \phi[\sigma,\psi]$ the solution of the following elliptic problem:
\begin{align}
\div(P(\sigma)\nabla \phi) &= 0  \quad \text{on } \Omega_0, \\
\phi &= \psi                     \quad \text{on } \Gamma_0.
\end{align}
One can check that $\phi \in H^2(\Omega_0)$.
Moreover, if we introduce the function $\Phi:= \phi\circ\sigma^{-1} \in H^2(\Omega)$, then $\Phi$ is the solution of the following problem:
\begin{align}
\Delta\Phi &= 0             \quad \text{on } \Omega, \\
\Phi &= \psi\circ\sigma^{-1}     \quad \text{on } \Gamma.
\end{align}
In particular, for $\rho$ fixed, the function $\Phi_{|\sigma(\tubp\cup\Omega_0)}$ does not depend on the choice of $\sigma$.
A useful consequence of this fact, using \eqref{e:sigmaInvForm}, is that the restriction of $\phi$ to $\tubp\cup\Omega_0$ does not depend on the choice of $\sigma$.
When this does not lead to confusion, we write $\phi[\rho,\psi]$ instead of $\phi[\sigma,\psi]$.

\begin{defn}
The operator $\G[\rho]$, defined through
\begin{equation}
\G[\rho] : 
\psi \in H^{\tfrac{3}{2}}(\Gamma_0)
\mapsto 
\left(\tfrac{1}{\det(K)}P(\sigma)\nabla\phi \cdot \norz\right)_{|\Gamma_0} \in H^{\tfrac{1}{2}}(\Gamma_0),
\end{equation}
is called the \textit{Dirichlet-to-Neumann} operator.
\end{defn}

The Dirichlet-to-Neumann operator is an operator of order $1$ in the sense of the following proposition.
For a proof, we refer to \cite{Mclean}, Theorem 4.21, on the \textit{Poincaré-Steklov operator}, which is an other name for Dirichlet-to-Neumann.

\begin{prop}
For $s\geq 0$ a real number, $r\geq s$ an integer  and $\rho \in U^{r+1}(\Gamma_0)$, the operator $\G[\rho]$ can be extended from $H^{s+1/2}(\Gamma_0)$ to $H^{s-1/2}(\Gamma_0)$.
\end{prop}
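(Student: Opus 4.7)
The plan is to reduce the claim to elliptic regularity for the uniformly coercive variable-coefficient problem $\div(P(\sigma)\nabla\phi)=0$ on $\Omega_0$ with Dirichlet data $\psi$ on $\Gamma_0$, since $\G[\rho]\psi$ differs from the conormal derivative $P(\sigma)\nabla\phi\cdot\norz$ only through multiplication by $1/\det(K)$, a function in $W^{r,\infty}(\Gamma_0)$ which is bounded away from zero and therefore a multiplier on every $H^\tau(\Gamma_0)$ with $|\tau|\leq r$. I would proceed by treating the base case $s=0$ with Lax--Milgram plus a distributional normal trace, and then bootstrapping to general $s$ by boundary elliptic regularity.

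For the base case, given $\psi\in H^{1/2}(\Gamma_0)$, choose any lifting $\widetilde\psi\in H^1(\Omega_0)$ with trace $\psi$. Since $P(\sigma)$ is uniformly coercive and in $L^\infty(\Omega_0)$, the bilinear form $a(u,v):=\int_{\Omega_0}(P(\sigma)\nabla u)\cdot\nabla v$ is continuous and coercive on $H^1_0(\Omega_0)$, so Lax--Milgram produces a unique $\phi\in H^1(\Omega_0)$ with trace $\psi$ satisfying $\|\phi\|_{H^1}\lesssim\|\psi\|_{H^{1/2}}$. The conormal derivative is then defined in $H^{-1/2}(\Gamma_0)$ via
\[
\langle P(\sigma)\nabla\phi\cdot\norz,\,v\rangle_{\Gamma_0}:=\int_{\Omega_0}(P(\sigma)\nabla\phi)\cdot\nabla V,
\]
for any $V\in H^1(\Omega_0)$ with $V_{|\Gamma_0}=v$; the right-hand side is independent of the lifting thanks to the weak equation, and defines a bounded functional on $H^{1/2}(\Gamma_0)$. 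Dividing by $\det(K)$ gives $\G[\rho]\psi\in H^{-1/2}(\Gamma_0)$, establishing the case $s=0$.

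For $s$ a positive integer, I would argue by induction. Applying a tangential derivative $\partial^\alpha_{\Gamma_0}$ of order $|\alpha|\leq s$ to the equation, $\partial^\alpha\phi$ satisfies an elliptic equation of the same form with source built from brackets $[\partial^\alpha,\div P(\sigma)\nabla]\phi$, which involve derivatives of $P(\sigma)$ up to order $|\alpha|\leq s\leq r$ and derivatives of $\phi$ up to order $|\alpha|+1$; since $P(\sigma)\in W^{r,\infty}$ and by induction these lower-order quantities are already controlled, the induction closes and yields $\partial^\alpha\phi\in H^1(\Omega_0)$. Normal derivatives are recovered algebraically: the equation solves for $\partial_n^2\phi$ in terms of lower-order and tangential derivatives, the normal-normal entry of $P(\sigma)$ being bounded away from zero by coercivity. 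One gets $\phi\in H^{s+1}(\Omega_0)$, hence $P(\sigma)\nabla\phi\in H^s(\Omega_0)^{d+1}$ (using $W^{r,\infty}\cdot H^s\subset H^s$ for $r\geq s$), whose normal trace belongs to $H^{s-1/2}(\Gamma_0)$. Non-integer $s$ follows by interpolating between consecutive integer values along the scale $H^\tau$.

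The main obstacle is the boundary elliptic regularity for a variable-coefficient operator whose coefficients have only finite Sobolev regularity $W^{r,\infty}$, in particular tracking the commutators carefully so as to require no more than $r$ derivatives of $P(\sigma)$. This is precisely the content of Theorem~4.21 in \cite{Mclean} on the Poincar\'e--Steklov operator, to which one defers the detailed bootstrap argument.
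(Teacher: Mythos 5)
Your proposal is correct and ends up in the same place as the paper, which offers no proof of this proposition at all beyond citing Theorem~4.21 of \cite{Mclean} on the Poincar\'e--Steklov operator. Your sketch (Lax--Milgram with a weak conormal trace for $s=0$, tangential-derivative bootstrap for integer $s$, interpolation in between, with the multiplier $1/\det(K)$ handled by its $W^{r,\infty}$ regularity and lower bound) is the standard argument behind that citation, so there is nothing to compare: you have simply filled in what the paper delegates to the reference.
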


\begin{rem}
Introducing the vector field $\Nor:\Gamma_0\mapsto \R^{d+1}$ as follows,
\begin{align} \notag
\Nor 
    &:= (1+\rho)\,^t(d\sigma)^{-1}\norz, \\ \label{e:unit-vect}
    &= \norz - \,^tK^{-1}\nabla_{\Gamma_0} \rho,
\end{align}
one can rewrite the Dirichlet-to-Neumann operators as 
\begin{equation}
\G[\rho]\psi = \nabs \phi \cdot \Nor.
\end{equation}
Hence, this definition of $\G$ is not exactly the same as the classical one, since $\Nor$ is normal to $\Gamma$, but not unitary.
It differs from the classical definition through the multiplication by $1/|N|$, which is a function depending on $\rho$.
\end{rem}

\subsection{Shape derivative of the Dirichlet-to-Neumann operator} \label{ss:shape-der-DtN}

The classical setting for \textit{shape derivatives} is to introduce, for $\theta$ a diffeomorphism, the set $\Omega_{\theta} := (\Id + \theta)(\Omega)$.
Then one differentiates with respect to $\theta$ the functions and functionals that depend on $\theta$.
This can be difficult to implement, because the variables with respect to which we differentiate are diffeomorphisms, and more difficult to compute during numerical simulations.

We take a slightly different approach here, made possible by the hypothesis that $\Gamma$ is parameterized over $\Gamma_0$, and look at Fréchet derivatives of our functions with respect to $\rho$:
\begin{equation}
f[\rho+\varepsilon\rhop]
    \underset{\varepsilon\rightarrow 0}{=} f[\rho] 
     + \varepsilon d_{\rho}f(\rhop)
     + o(\varepsilon).
\end{equation}
Both approaches are valid and we refer to the classic \cite{ShapeDerLivre} for more information on shape derivatives.

Our goal in this paragraph is to derive an explicit formula for the shape derivative of the Dirichlet-to-Neumann operator.
Let us start with the differentiability.

\begin{prop} \label{p:shape-derivability}
Let $s\geq 0$ a real number, $r\geq s$ an integer.
The operator $\G$, viewed as a function from $U^{r+1}(\Gamma_0)$ to $ \mathcal{L}\left(H^{s+1/2}(\Gamma_0),H^{s-1/2}(\Gamma_0)\right)$
is continuous and Fréchet differentiable.
\end{prop}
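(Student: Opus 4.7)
The strategy I would follow is to decompose $\G[\rho]\psi$ as a composition of three maps and check smoothness of each. First, $\rho \mapsto \sigma[\rho]$: the formula \eqref{e:syst-diffeo-def} is affine in $\rho$, so $\rho \mapsto d\sigma$ and hence $\rho \mapsto P(\sigma[\rho])$ are $C^\infty$ from $U^{r+1}(\Gamma_0)$ into $W^{r,\infty}(\Omega_0; \mathcal{L}(\R^{d+1}))$; similarly $\rho \mapsto K[\rho]$ and $\rho \mapsto 1/\det K[\rho]$ are smooth by \eqref{e:def-K} and the uniform coercivity established in Proposition \ref{p:adm-diff-constr}. Second, the solution map $(\rho,\psi) \mapsto \phi[\rho,\psi]$ of the elliptic problem $\div(P(\sigma[\rho])\nabla\phi) = 0$ with $\phi_{|\Gamma_0} = \psi$. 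Third, the boundary evaluation $(\rho,\phi) \mapsto \tfrac{1}{\det K} P(\sigma)\nabla\phi\cdot\norz_{|\Gamma_0}$, which is a continuous bilinear map in its arguments and a trace. The first and third steps are straightforward; the main content is in the second.

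For the solution map, I would fix a bounded right-inverse $\mathcal{R} : H^{s+1/2}(\Gamma_0) \to H^{s+1}(\Omega_0)$ of the trace and write $\phi = \mathcal{R}\psi + \tilde\phi$ with $\tilde\phi \in H^{s+1}(\Omega_0)\cap H^1_0(\Omega_0)$. The equation becomes
\begin{equation*}
F(\rho,\tilde\phi,\psi) := \div\bigl(P(\sigma[\rho])\nabla(\mathcal{R}\psi+\tilde\phi)\bigr) = 0.
\end{equation*}
The partial derivative $\partial_{\tilde\phi}F$ is $\tilde u \mapsto \div(P(\sigma[\rho])\nabla \tilde u)$, which by uniform coercivity of $P(\sigma[\rho])$ and Lax-Milgram is an isomorphism $H^{s+1}\cap H^1_0 \to H^{s-1}$ (for $s\ge 0$ and $r\ge s$), the gain in regularity being classical elliptic regularity with coefficients in $W^{r,\infty}$. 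Since $F$ depends smoothly on $(\rho,\tilde\phi,\psi)$, the implicit function theorem yields the Fréchet differentiability of $\tilde\phi$, hence of $\phi$, as a map from $U^{r+1}(\Gamma_0) \times H^{s+1/2}(\Gamma_0)$ into $H^{s+1}(\Omega_0)$. Explicitly, $\dot\phi := d_\rho\phi(\rhop)$ solves the linear Dirichlet problem
\begin{equation*}
\div(P(\sigma[\rho])\nabla\dot\phi) = -\div\bigl(d_\rho P(\sigma)(\rhop)\,\nabla\phi[\rho,\psi]\bigr), \qquad \dot\phi_{|\Gamma_0}=0.
\end{equation*}
Composition with the boundary evaluation then produces a candidate linear map $d_\rho\G(\rhop)$ and the Fréchet remainder estimate pointwise in $\psi$.

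To upgrade this to Fréchet differentiability in the operator-norm topology of $\mathcal{L}(H^{s+1/2},H^{s-1/2})$, I need the remainder to satisfy
\begin{equation*}
\|\G[\rho+\rhop]\psi - \G[\rho]\psi - d_\rho\G(\rhop)\psi\|_{H^{s-1/2}} \le C(\rho)\,\|\rhop\|_{W^{r+1,\infty}}^2\,\|\psi\|_{H^{s+1/2}},
\end{equation*}
with $C(\rho)$ locally uniform. The key input is that the elliptic constant in the estimate $\|\phi\|_{H^{s+1}}\le C(\rho)\|\psi\|_{H^{s+1/2}}$ is linear in $\rho$ on $U^{r+1}$ and quadratic control of the second-order Taylor remainder of $\rho\mapsto P(\sigma[\rho])$, both obtained from the explicit formulas. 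Continuity of $\rho\mapsto \G[\rho]$ follows from the same uniform estimates applied to $\phi[\rho+\rhop,\psi]-\phi[\rho,\psi]$. The main obstacle I foresee is precisely this uniformity in $\psi$: one must track constants carefully through elliptic regularity (especially for non-integer $s$, where one may interpolate, or use that $\Gamma_0$ is smooth and $P(\sigma[\rho])\in W^{r,\infty}$ with $r\ge s$) to ensure the $o(\|\rhop\|)$ bound is uniform on the unit ball of $H^{s+1/2}(\Gamma_0)$.
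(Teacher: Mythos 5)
Your proposal is correct and follows essentially the same route as the paper: your candidate derivative $\dot\phi$ solves exactly the linearized Dirichlet problem with source $-\div\bigl(d_\sigma P(\sigmap)\nabla\phi[\rho,\psi]\bigr)$ that the paper introduces as $\tilde{\phi}$, and the remainder is controlled by elliptic regularity together with the smoothness of $\rho\mapsto P(\sigma[\rho])$. The only real difference is packaging — you invoke the implicit function theorem, whereas the paper writes out the zeroth- and first-order Taylor remainders $R_0$, $R_1$ of the solution map and bounds them directly by the corresponding remainders $P_0$, $P_1$ of the coefficient matrix; your closing concern about uniformity in $\psi$ is exactly what those explicit estimates deliver, since all constants depend only on $\Omega_0$ and $\Vert\phi[\rho+\varepsilon\rhop,\psi]\Vert_{H^{s+1}}\lesssim\Vert\psi\Vert_{H^{s+1/2}}$.
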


\begin{proof}
The idea behind this proof is to look at the solution $\phi$ and its perturbation when $\rho$ changes. 
This perturbation solves an elliptic equation, on which we can perform estimates to prove that it is small.

For $\rho,\rhop\in W^{r+1,\infty}(\Gamma_0)$ and $\psi \in H^{s+1/2}(\Gamma_0)$, let us introduce $\tilde{\phi}[\rho,\rhop,\psi]$ the solution of the elliptic problem:
\begin{align}
\div(P(\sigma)\nabla \tilde{\phi}) &= -\div(d_{\sigma}P(\sigmap)\nabla \phi[\rho,\psi])  \quad \text{on } \Omega_0, \\
\tilde{\phi} &= 0           \quad \text{on } \Gamma_0,
\end{align}
where $d_{\sigma}P$ is the derivative of the matrix $P$ with respect to $\sigma$ and $\sigmap$ is the change of $\sigma$ with respect to~$\rhop$:
\begin{equation}
\sigmap(z) 
    := \begin{cases}
           0 & \text{ if } z \notin \tub, \\
           \delta(x) \rhop(X)\norz(X) & \text{ if } z= X+x\norz(X) \in \tub.
       \end{cases}
\end{equation}
As $\rhop \in W^{r+1,\infty}(\Gamma_0)$, we have $d_{\sigma}P(\sigmap)\in W^{r,\infty}(\Omega_0)$, and then $\div(d_{\sigma}P(\sigmap)\nabla \phi[\rho,\psi])\in H^{s-1}(\Omega_0)$.
Then, due to elliptic regularity, we can deduce that $\tilde{\phi} \in H^{s+1}(\Omega_0)$.

Now, we introduce $R_0 = R_0(\rho,\rhop,\psi,\varepsilon)$ and $R_1= R_1(\rho,\rhop,\psi,\varepsilon)$ the Taylor remainders of the solution $\phi$ at order $0$ and $1$, with respect to $\rho$:
\begin{subequations}
\begin{align}
R_0 &:= \phi[\rho+\varepsilon\rhop,\psi]- \phi[\rho,\psi], \\
R_1 &:= \phi[\rho+\varepsilon\rhop,\psi]- \phi[\rho,\psi] - \varepsilon \tilde{\phi}[\rho,\rhop,\psi].
\end{align}
\end{subequations}
We also introduce $P_0= P_0(\rho,\rhop,\varepsilon)$ and $P_0= P_0(\rho,\rhop,\varepsilon)$ the Taylor remainders of the matrix $P$ with respect to~$\rho$:
\begin{subequations}
\begin{align}
P_0 &:= P(\sigma+\varepsilon\sigmap)-P(\sigma), \\
P_1 &:= P(\sigma+\varepsilon\sigmap)-P(\sigma)- \varepsilon d_{\sigma}P(\sigmap).
\end{align}
\end{subequations}
We have $(R_0)_{|\Gamma_0}=(R_1)_{|\Gamma_0}= 0$, and inside the domain $\Omega_0$, one has:
\begin{subequations}
\begin{align}
\div(P(\sigma)\nabla R_0) 
    &= \div\big( P_0\nabla \phi[\rho+\varepsilon\rhop,\psi] \big), \\
\div(P(\sigma)\nabla R_1) 
    &= \, \div\big( P_1 \nabla \phi[\rho+\varepsilon\rhop,\psi] \big) 
     - \varepsilon \div\big(d_{\sigma}P(\sigmap)\nabla R_0 \big).
\end{align}
\end{subequations}
By elliptic regularity, we get the bounds:
\begin{subequations}
\begin{align}
\Vert R_0\Vert_{H^{s+1}} 
    &\lesssim \Vert P_0\Vert_{W^{r,\infty}} \Vert \phi[\rho+\varepsilon\rhop,\psi]\Vert_{H^{s+1}}, \\
\Vert R_1\Vert_{H^{s+1}} 
   &\lesssim \Vert P_1\Vert_{W^{r,\infty}} \Vert \phi[\rho+\varepsilon\rhop,\psi]\Vert_{H^{s+1}}
    + \varepsilon \Vert d_{\sigma}P(\sigmap)\Vert_{W^{r,\infty}} \Vert R_0\Vert_{H^{s+1}},
\end{align}
\end{subequations}
with constants depending only on the domain $\Omega_0$.
Then, using the fact that $\Vert P_0\Vert_{W^{r,\infty}} \underset{\varepsilon\rightarrow 0}{=} o(1)$ and $\Vert P_1\Vert_{W^{r,\infty}} \underset{\varepsilon\rightarrow 0}{=} o(\varepsilon)$, we get that $\Vert R_0\Vert_{H^{s+1}} \underset{\varepsilon\rightarrow 0}{=} o(1)$, granting us the continuity of $\G$ and then $\Vert R_1\Vert_{H^{s+1}} \underset{\varepsilon\rightarrow 0}{=} o(\varepsilon)$, granting us the differentiability of $\G$.
\end{proof}

\begin{rem}
Such a proof can be easily extended to prove the analiticity of the Dirichlet-to-Neumann operator with respect to shape (see \cite{Lannes} for the case of the strip for example).
\end{rem}

Now that we know that $\G$ is differentiable with respect to shape,
let us compute its derivative explicitly.

\begin{thm} \label{t:shape-der-formula}
The shape derivative of the Dirichlet-to-Neumann operator is given through the following formula:
\begin{equation} \label{e:shape-der-form}
\mathrm{d}_{\rho}\G(\rhop)\psi 
    = -\G[\rho](\rhop w) 
    - \tfrac{1}{\det(K)}\div_{\Gamma_0}(\rhop \mathbf{V})
    + \rhop\, a_0(\rho,\psi),
\end{equation}
where $w$, $\mathbf{V}$ and $a_0$ are given through:
\begin{subequations}
\begin{align}
w          
    &:= \frac{\partial_{\norz}\phi}{1+\rho} 
    = \frac{\G\psi + G\nabla_{\Gamma_0}\rho\cdot \nabla_{\Gamma_0}\psi}{1+|\,^tK^{-1}\nabla_{\Gamma_0}\rho|^2}, \\
\mathbf{V} 
    &:= \det(K)K^{-1}\Pi_0\nabs\phi
    = \det(K)G(\nabla_{\Gamma_0}\psi-w\nabla_{\Gamma_0}\rho), \\
a_0(\rho, \psi)
    &:= - \left(\frac{ H_0}{1+\rho}+\mathrm{tr}(K^{-1}d\norz)\right)\G\psi
     + \frac{1}{\det(K)}\mathbf{V}\cdot d\norz\,^{t}K^{-1}\nabla_{\Gamma_0}\rho, \\
G
    &:= K^{-1}\,^tK^{-1}.
\end{align}
\end{subequations}
\end{thm}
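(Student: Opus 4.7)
The approach is Alinhac's good unknown. I would introduce
\begin{equation}
\phi^{\sharp} := \tilde{\phi} - \nabs\phi \cdot \sigmap,
\end{equation}
with $\tilde{\phi}$ the Fréchet derivative of $\phi$ built in the proof of Proposition \ref{p:shape-derivability}. The key claim is that $\phi^{\sharp}$ coincides, after pushforward by $\sigma$, with the pointwise ``Eulerian'' derivative $d_{\rho}\Phi(Y)$ of the physical harmonic extension $\Phi = \phi\circ \sigma^{-1}$ at a fixed point $Y$. A short computation using $d_{\rho}(\sigma^{-1}) = -(d\sigma)^{-1}\sigmap$ shows that $\phi^{\sharp} \circ \sigma^{-1} = d_{\rho}\Phi$ pointwise where both sides are defined, hence $\phi^{\sharp}\circ\sigma^{-1}$ is harmonic on the common interior of $\Omega$ and therefore $\div(P(\sigma)\nabla\phi^{\sharp}) = 0$ on $\Omega_0$. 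Using furthermore that $\tilde{\phi}$ vanishes on $\Gamma_0$, that $\sigmap|_{\Gamma_0} = \rhop\, \norz$, and that $(d\sigma)^{-1}\norz|_{\Gamma_0} = \norz/(1+\rho)$ (from \eqref{e:sigmaInvForm}), one finds
\begin{equation}
\phi^{\sharp}|_{\Gamma_0} = -\rhop\, \partial_{\norz}\phi/(1+\rho) = -\rhop\, w.
\end{equation}
Consequently $\phi^{\sharp} = \phi[\rho,-\rhop w]$, the transported harmonic extension of $-\rhop w$.

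With this in hand, I would begin from the rewriting $\G[\rho]\psi = \nabs\phi\cdot\Nor|_{\Gamma_0}$ (obtained from the definition by noting $\det(d\sigma)|_{\Gamma_0} = (1+\rho)\det K$ together with $\,^{t}(d\sigma)^{-1}\norz|_{\Gamma_0} = \Nor/(1+\rho)$) and apply Leibniz to the three $\rho$-dependent factors. Substituting $\tilde{\phi} = \phi^{\sharp} + \nabs\phi\cdot\sigmap$ splits the differential as
\begin{equation}
d_{\rho}\G(\rhop)\psi = \nabs\phi^{\sharp}\cdot\Nor\big|_{\Gamma_0} + R(\rho,\rhop,\psi),
\end{equation}
where the first piece is exactly $\G[\rho](\phi^{\sharp}|_{\Gamma_0}) = -\G(\rhop w)$ by the previous step, and the remainder $R$ is fully explicit in $\phi$, $\rho$, and $\rhop$ (no $\tilde\phi$).

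To reorganize $R$ into $-\frac{1}{\det K}\div_{\Gamma_0}(\rhop \mathbf{V}) + \rhop\, a_0$, I would decompose $\nabs\phi$ on $\Gamma_0$ into its tangential part $\,^{t}K^{-1}\nabla_{\Gamma_0}\psi$ and its normal part $w\Nor$, and expand each occurrence of $\nabs$, $\Nor$, and $\sigmap$ using the block matricial formula for $d\sigma$ at $\Gamma_0$. Several terms inevitably produce a factor of $\partial_{\norz}^2\phi$ or of $\partial_{\norz}(\,^{t}K^{-1}\nabla_{\Gamma_0}\phi)$; these are eliminated by invoking the boundary identity $\Delta\Phi = 0$ on $\overline{\Omega}$ (valid up to $\Gamma$ by elliptic regularity since $\psi\in H^{3/2}$), rewritten in the reference frame as a relation expressing a second normal derivative of $\phi$ in terms of tangential derivatives plus curvature corrections. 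After this substitution, the purely tangential contributions collapse, via \eqref{e:divg-prod}, into $-\frac{1}{\det K}\div_{\Gamma_0}(\rhop\mathbf{V})$ with $\mathbf{V} = \det(K)K^{-1}\Pi_0\nabs\phi = \det(K)G(\nabla_{\Gamma_0}\psi - w\nabla_{\Gamma_0}\rho)$; the residual zeroth-order terms — produced by differentiating $K^{-1}$, $\det K$, $1+\rho$, and by the mean-curvature identity \eqref{e:mean-curv} — assemble into $\rhop\, a_0$ with the announced form.

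The main obstacle is this last rearrangement: nothing \emph{a priori} makes the combinatorics of $R$ organize into a tangential divergence, and one must recognize that the cross terms created by the Leibniz rule applied to $\nabs(\nabs\phi\cdot\sigmap)\cdot\Nor$ and by the derivative of $\Nor$ — once the boundary harmonicity has been used to cancel the $\partial_{\norz}^2\phi$ factors — recombine into $\div_{\Gamma_0}(\rhop\mathbf{V})$ plus a residue in which $\phi$ appears only through $\G\psi$. Verifying that this residue has the precise form $a_0(\rho,\psi) = -\bigl(\frac{H_0}{1+\rho}+\mathrm{tr}(K^{-1}d\norz)\bigr)\G\psi + \frac{1}{\det K}\mathbf{V}\cdot d\norz\,^{t}K^{-1}\nabla_{\Gamma_0}\rho$ is the most delicate bookkeeping, and is exactly where the curvature of $\Gamma_0$ enters, distinguishing this formula from the flat-strip case treated in \cite{Lannes}.
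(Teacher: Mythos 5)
Your proposal follows essentially the same route as the paper: your good unknown $\phi^{\sharp} = \tilde{\phi} - \nabs\phi\cdot\sigmap$ is exactly the paper's $\mathrm{d}_{\rho}^{\sigma}\phi(\rhop)$ (the paper encodes its harmonicity via a commutation lemma rather than stating $\div(P(\sigma)\nabla\phi^{\sharp})=0$ outright), its boundary value $-\rhop w$ produces the $-\G(\rhop w)$ term, and the remainder is reorganized just as in the paper by combining the Leibniz rule on $\nabs\phi\cdot\Nor$ with the tangential/normal decomposition of $\div(P(\sigma)\nabla\phi)=0$ restricted to $\Gamma_0$ (Lemma \ref{l:div-decomp-tang-norm}). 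The final bookkeeping you defer is carried out explicitly in the paper, but every structural idea matches.
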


\begin{rem}
By default, when it is ommitted, $\G\psi$ means $\G[\rho](\psi)$.
\end{rem}


\begin{proof}[Proof of Theorem \ref{t:shape-der-formula}]
The primal idea behind this computation is to introduce \textit{Alinhac's good unknown}.
For $f$ a function depending on the variable $Y=X+x\norz$ and on $\rho$, which we wish to differentiate with respect to $\rho$, we introduce
\begin{align*}
\mathrm{d}_{\rho}^{\sigma}f(\rhop)(Y)
    &:= \mathrm{d}_{\rho}\left(f\circ\sigma^{-1}\right)(\rhop)(\sigma(Y)) \\
    &= \mathrm{d}_{\rho}f(\rhop) + d_{Y}f(\mathrm{d}_{\rho}(\sigma^{-1})(r)\circ\sigma)  \\
    &= \mathrm{d}_{\rho}f(\rhop) - \tfrac{\rhop}{1+\rho}\partial_{\norz}f.
\end{align*}

The main point of the good unknown is Lemma \ref{l:commmutation-alinhac-transnabla}:

\begin{lem} \label{l:commmutation-alinhac-transnabla}
Let $f : C^1(W^{2,\infty}(\Gamma_0)\times \Omega_0, \R)$ be a function depending on the shape of the domain and the space variable, then one has the following commutation formula:
\begin{equation}
\mathrm{d}_{\rho}^{\sigma}\nabs f(\rhop) = \nabs\mathrm{d}_{\rho}^{\sigma}f(\rhop).
\end{equation}
\end{lem}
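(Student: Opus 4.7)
The plan is to unwind both sides via the defining relations of $\mathrm{d}_{\rho}^{\sigma}$ and $\nabs$, and reduce the claimed commutation to the trivial fact that, on the moving ambient side, the Euclidean gradient (which acts only in the spatial variable) commutes with differentiation in the parameter $\rho$.

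First I would introduce the family $F[\rho] := f[\rho]\circ\sigma^{-1}$ of functions on the moving domain $\Omega(\rho)\subset\R^{d+1}$. With this notation, the definition of the transported gradient reads $\nabs f = (\nabla F)\circ\sigma$, while the definition of the good unknown, read off the second line of the computation displayed just above the lemma, becomes $\mathrm{d}_{\rho}^{\sigma}f(\rhop) = (\mathrm{d}_{\rho}F(\rhop))\circ\sigma$. Both sides of the claimed identity can then be rewritten in terms of $F$:
\begin{align*}
\mathrm{d}_{\rho}^{\sigma}(\nabs f)(\rhop)
    &= \mathrm{d}_{\rho}\bigl((\nabs f)\circ\sigma^{-1}\bigr)(\rhop)\circ\sigma
     = \mathrm{d}_{\rho}(\nabla F)(\rhop)\circ\sigma, \\
\nabs\bigl(\mathrm{d}_{\rho}^{\sigma}f(\rhop)\bigr)
    &= \nabla\bigl((\mathrm{d}_{\rho}^{\sigma}f(\rhop))\circ\sigma^{-1}\bigr)\circ\sigma
     = \nabla\bigl(\mathrm{d}_{\rho}F(\rhop)\bigr)\circ\sigma.
\end{align*}
The lemma therefore reduces to the commutation $\mathrm{d}_{\rho}(\nabla F)(\rhop) = \nabla(\mathrm{d}_{\rho}F(\rhop))$ on the moving domain.

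The only real step is to justify this last commutation, and this is where the sole delicate point lies. Since $\nabla$ acts on the Euclidean spatial variable alone while $\mathrm{d}_{\rho}$ acts on the parameter $\rho$, Schwarz's theorem yields the desired identity provided the map $(\rho,Z) \mapsto F[\rho](Z) = f[\rho]\bigl(\sigma[\rho]^{-1}(Z)\bigr)$ is $C^1$ in both variables jointly. To make sense of $\mathrm{d}_{\rho}F(\rhop)(Z)$ at a fixed spatial point one has to observe that for any interior point $Z\in\Omega(\rho)$ and any $\rhop\in W^{2,\infty}(\Gamma_0)$, the point $Z$ still belongs to $\Omega(\rho+\varepsilon\rhop)$ for $\varepsilon$ small enough, because $\Omega(\rho')$ depends continuously on $\rho'$ thanks to the admissibility of $\sigma$ provided by Proposition \ref{p:adm-diff-constr}. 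Joint smoothness of $F$ in $(\rho,Z)$ is then immediate from the $W^{r,\infty}$-dependence of $\sigma^{-1}$ on $\rho$ (the explicit formula \eqref{e:sigmaInvForm}) together with the assumed $C^1$-regularity of $f$ in its $\rho$-variable. I expect this verification — essentially a routine check that the $\rho$-derivative is well defined pointwise on the moving domain — to be the main, but admittedly modest, obstacle; once granted, the lemma is a purely formal consequence of the interchange of partial derivatives in independent variables.
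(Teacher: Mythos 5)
Your proof is correct and follows essentially the same route as the paper: both introduce $F[\rho]:=f[\rho]\circ\sigma^{-1}$ on the moving domain, observe that $\nabs f=(\nabla F)\circ\sigma$ and $\mathrm{d}_{\rho}^{\sigma}f(\rhop)=(\mathrm{d}_{\rho}F(\rhop))\circ\sigma$, and reduce the claim to the interchange $\mathrm{d}_{\rho}\nabla F(\rhop)=\nabla\mathrm{d}_{\rho}F(\rhop)$. The only difference is that you spell out the Schwarz-type justification and the pointwise well-definedness of $\mathrm{d}_{\rho}F$ on the moving domain, which the paper leaves implicit.
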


\begin{proof}[Proof of Lemma \ref{l:commmutation-alinhac-transnabla}]
We are just saying that derivatives commutes in the moving space $\Omega$.
Let us introduce the function $F$ defined by 
$$ F(\rho, z) := f(\rho,\sigma^{-1}(z)). $$
The formula above corresponds to 
\begin{equation}
\mathrm{d}_{\rho}\nabla F(\rhop)(z) = \nabla\mathrm{d}_{\rho}F(\rhop)(z).
\end{equation}
\end{proof}

\begin{rem}
In this context, Alinhac's good unknown is just a change of coordinates.
This change of coordinates, as well as its utility in the context of hyperbolic system was historically introduced by Alinhac in \cite{Alinhac1} and later on in \cite{Alinhac2}.
The idea behind is that this derivatives are more regular than the classical ones, because some of the regularity falls on the diffeomorphism.
You can find a more in depth discussion in the case of hyperbolic free boundary problem in \cite{Iguchi-Lannes1}.

At low regularity, Alinhac's good unknown is more complex, because the product $\tfrac{\rhop}{1+\rho}\partial_{\norz}f$ on the second term is not well defined.
In this case, one uses instead the lower frequency term of the associated para-product decomposition.
See \cite{ParaLinDtN} 
for a more detailed computation using the aforementioned technics.
\end{rem}

We wish to decompose $\div(P(\sigma)\nabla\phi)$ in terms of tangential and normal derivatives.

\begin{lem}\label{l:div-decomp-tang-norm}
The \textit{transported laplacian} $\div(P(\sigma)\nabla\phi)$, once restricted to the boundary $\Gamma_0$, can be decomposed in terms of  tangential and  normal components as follows
\begin{equation} \label{e:decomp-lap-transp}
\div(P(\sigma)\nabla\phi) 
    = (1+\rho)\div_{\Gamma_0}\left(\det(K)K^{-1}\Pi_0\nabs\phi\right)  
     + \left(H_0+\partial_{\norz}\right)\left(\det(K) \Nor\cdot \nabs\phi\right),
\end{equation}
where $\Nor(X+x\norz(X)) := \Nor(X)$ is considered as constant with respect to the normal coordinate.
\end{lem}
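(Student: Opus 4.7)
The plan is a direct calculation using the block-matrix formulas already derived for $d\sigma$ and $(d\sigma)^{-1}$. The whole computation is carried out in the tubular neighbourhood $\tubp$ and only at the end restricted to $\Gamma_0$.

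First I would rewrite $W:=P(\sigma)\nabla\phi = |\det d\sigma|\,(d\sigma)^{-1}\nabs\phi$ in tangential/normal components. Using $|\det d\sigma|=\det(K)(1+\rho)$ and applying the block formula for $(d\sigma)^{-1}$ to the decomposition $\nabs\phi=\Pi_0\nabs\phi+(\norz\cdot\nabs\phi)\norz$, together with the fact that the Weingarten map is self-adjoint (so $\,^tK=K$), I would obtain in $\tubp$
\begin{equation*}
W = \underbrace{(1+\rho)\det(K)\,K^{-1}\Pi_0\nabs\phi}_{=:W_T}
 \;+\; \underbrace{\det(K)\bigl[\norz - (x+1)\,^tK^{-1}\nabla_{\Gamma_0}\rho\bigr]\cdot\nabs\phi}_{=:W_N}\,\norz.
\end{equation*}
At $x=0$ the normal component reads $W_N|_{\Gamma_0}=\det(K)\,\Nor\cdot\nabs\phi$, which already matches the scalar argument on the second term of the RHS of the lemma.

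Second, I would split $\div W$ using $\div\mathbf v = \div_{\Gamma_0}\mathbf v + d\mathbf v(\norz)\cdot\norz$. Because $\norz$ is extended as constant along normal lines, $\partial_{\norz}\norz=0$, so $d\mathbf v(\norz)\cdot\norz=\partial_{\norz}(\mathbf v\cdot\norz)$. Applied to $W=W_T+W_N\norz$, together with \eqref{e:mean-curv} giving $\div_{\Gamma_0}(f\norz)|_{\Gamma_0}=H_0 f$ and $\nabla_{\Gamma_0}W_N\cdot\norz=0$, this yields
\begin{equation*}
\div W\big|_{\Gamma_0} \;=\; \div_{\Gamma_0}W_T\big|_{\Gamma_0} \;+\; H_0\,W_N\big|_{\Gamma_0} \;+\; \partial_{\norz}W_N\big|_{\Gamma_0}.
\end{equation*}

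Third, I would extract the factor $(1+\rho)$ from $W_T$ by the product rule \eqref{e:divg-prod}:
\begin{equation*}
\div_{\Gamma_0}W_T \;=\; (1+\rho)\,\div_{\Gamma_0}\bigl(\det(K)K^{-1}\Pi_0\nabs\phi\bigr) \;+\; \nabla_{\Gamma_0}\rho\cdot\det(K)K^{-1}\Pi_0\nabs\phi.
\end{equation*}
The first term is exactly the first summand on the RHS of the lemma; the $H_0 W_N|_{\Gamma_0}$ term produces $H_0\,\det(K)\Nor\cdot\nabs\phi$.

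The main obstacle, to which I would devote the rest of the proof, is to show that the tangential correction $\nabla_{\Gamma_0}\rho\cdot\det(K)K^{-1}\Pi_0\nabs\phi$ combines with $\partial_{\norz}W_N|_{\Gamma_0}$ to reproduce $\partial_{\norz}(\det(K)\,\Nor\cdot\nabs\phi)|_{\Gamma_0}$, with $\Nor$ extended as constant in the normal coordinate. The subtlety is that the ``natural'' expression $W_N = \det(K)(\norz-(x+1)\,^tK^{-1}\nabla_{\Gamma_0}\rho)\cdot\nabs\phi$ is \emph{not} $\det(K)\Nor\cdot\nabs\phi$ away from $\Gamma_0$: the two agree at $x=0$ but their $x$-derivatives differ. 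Computing this difference by the chain rule, using $\partial_x K=(1+\rho)\weing$ and $\,^tK=K$, the mismatch in $\partial_{\norz}$ is exactly $-\nabla_{\Gamma_0}\rho\cdot\det(K)K^{-1}\Pi_0\nabs\phi$ (up to further curvature corrections that account for the difference between $\,^tK^{-1}\nabla_{\Gamma_0}\rho$ at $x\neq 0$ and at $x=0$). Collecting terms yields precisely the claimed decomposition.
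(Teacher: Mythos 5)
Your plan follows the same route as the paper's proof: the block formula for $(d\sigma)^{-1}$ to split $P(\sigma)\nabla\phi$ into tangential and normal parts, the splitting $\div \mathbf{v} = \div_{\Gamma_0}(\Pi_0\mathbf{v}) + (H_0+\partial_{\norz})(\mathbf{v}\cdot\norz)$, and the product rule to pull $(1+\rho)$ out of the tangential divergence; your first three steps are correct and coincide with what the paper does.

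The step you have not actually closed is the final cancellation, and as you have set it up it does not quite work. You compare $\partial_{\norz}W_N$ with $\partial_{\norz}\left(\det(K)\,\Nor\cdot\nabs\phi\right)$ where $\Nor$ is frozen at $x=0$; a direct computation using $\partial_x K=(1+\rho)\weing$ gives
\begin{equation*}
\partial_{\norz}W_N\big|_{\Gamma_0} - \partial_{\norz}\left(\det(K)\,\Nor\cdot\nabs\phi\right)\big|_{\Gamma_0}
= -\det(K)\,\nabla_{\Gamma_0}\rho\cdot K^{-1}\Pi_0\nabs\phi
+ (1+\rho)\det(K)\,^{t}K^{-1}\,^{t}\weing\,^{t}K^{-1}\nabla_{\Gamma_0}\rho\cdot\nabs\phi,
\end{equation*}
so the ``further curvature corrections'' you allow for are genuinely nonzero and cannot be waved away: ``exactly $-\det(K)\nabla_{\Gamma_0}\rho\cdot K^{-1}\Pi_0\nabs\phi$ up to corrections'' is not yet a proof. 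The paper's computation avoids this by never introducing the frozen extension: identically in $x$ in $\tubp$ one has
\begin{equation*}
W_N = \det(K)\,\Nor\cdot\nabs\phi - x\,\det(K)\,\nabla_{\Gamma_0}\rho\cdot K^{-1}\Pi_0\nabs\phi,
\end{equation*}
provided the $\,^{t}K^{-1}$ inside $\Nor=\norz-\,^{t}K^{-1}\nabla_{\Gamma_0}\rho$ is evaluated at the running point $X+x\norz(X)$; the second summand then contributes exactly $-\det(K)\nabla_{\Gamma_0}\rho\cdot K^{-1}\Pi_0\nabs\phi$ to $\partial_{\norz}$ at $x=0$, which cancels the product-rule leftover with no residue. (The lemma's parenthetical ``$\Nor$ is constant in $x$'' is itself ambiguous on this point, which is probably what led you astray.) You should either adopt that bookkeeping, or compute the extra curvature term explicitly and track where it is absorbed; otherwise step four asserts a cancellation that has not been verified.
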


\begin{rem}
In particular, for $\rho = 0$, we recover the classical decomposition of the laplacian along tangential and normal derivative using the curvature of $\Gamma_0$:
\begin{equation}
\Delta \phi 
    = \div_{\Gamma_0}(\nabla_{\Gamma_0} \phi)
     + (H_0+\partial_{\norz}) \partial_{\norz} \phi.
\end{equation}
This formula is true in all of $\tub$. 
\end{rem}

\begin{proof}[Proof of Lemma \ref{l:div-decomp-tang-norm}]
We decompose the divergence operator taken at the boundary $\Gamma_0$.
Let $\mathbf{v} \in C^1(\tub,\R^{d+1})$ and $X\in \Gamma_0$.
Since $\Pi_0 \mathbf{v}(X+x\norz(X))\cdot \norz(X)=0$ is constant with respect to $x$, one has 
\begin{equation*}
d_{X+x\norz(X)}\mathbf{v}(\norz(X))\cdot\norz(X) = 0.
\end{equation*}
In particular, we have 
\begin{equation} \label{e:div-tang}
\div(\Pi_0 \mathbf{v}) = \div_{\Gamma_0}(\Pi_0 \mathbf{v}).
\end{equation}
Moreover, we have
\begin{align*}
\div(\Pi_0^{\perp}\mathbf{v})
    &= \div\left((\mathbf{v}\cdot\norz)\norz\right) \\ 
    &= \nabla(\mathbf{v}\cdot\norz) \cdot \norz 
     + (\mathbf{v}\cdot\norz) \div(\norz).
\end{align*}
Then, with, the assumption that $\norz$ is constant alongside $x$, namely $\norz(X+x\norz(X)) = \norz(X)$, we get
\begin{equation}\label{e:div-norm}
\div(\Pi_0^{\perp}\mathbf{v})
    = (H_0 + \partial_{\norz})(\mathbf{v}\cdot \norz).
\end{equation}

Combining \eqref{e:div-tang} and \eqref{e:div-norm}, we get:
\begin{align} \notag
\div (\mathbf{v}) 
    &= \div (\Pi_0(\mathbf{v}) + \Pi_0^{\perp}(\mathbf{v})) \\ \label{e:div-tan-norm}
    &= \div_{\Gamma_0}(\Pi_0 \mathbf{v}) 
     + (H_0 + \partial_{\norz})(\mathbf{v}\cdot \norz).
\end{align}
We decompose $P(\sigma)\nabla\phi$ under $\R^{d+1} = T_X\Gamma_0 \oplus T_X^{\perp}\Gamma_0$ for $X+x\norz(X) \in \tubp$ close enough of $\Gamma_0$:
\begin{align*}
P(\sigma)\nabla\phi
    &= \det(\mathrm{d}\sigma) (\mathrm{d}\sigma)^{-1} \,^t(\mathrm{d}\sigma)^{-1} \nabla\phi \\
    &= \det(\mathrm{d}\sigma) (\mathrm{d}\sigma)^{-1} \, \nabs \phi \\
    &= \det(\mathrm{d}\sigma) (\mathrm{d}\sigma)^{-1}(\Pi_0\nabs\phi+\Pi_0^{\perp}\nabs\phi) \\
    &= \det(\mathrm{d}\sigma) K^{-1}\Pi_0\nabs\phi 
        + \det(K)\left(\norz\cdot\nabs\phi-(x+1)\nabla\rho\cdot K^{-1}\Pi_0\nabs\phi\right)\norz \\
    &= \det(\mathrm{d}\sigma) K^{-1}\Pi_0\nabs\phi  
        + \det(K)(\Nor\cdot \nabs\phi)\norz
        - x\det(K) (\nabla_{\Gamma_0}\rho\cdot K^{-1}\Pi_0\nabs\phi)\norz.
\end{align*}
We used the definition of $P(\sigma)$ for the first equality, the definition of $\nabs$ for the second.
We decomposed along $\Id = \Pi_0 + \Pi_0^{\perp}$ for the third.
The fourth is an application of the formula \eqref{e:FormulaDSigInv} for $(d\sigma)^{-1}$.
And the last equality is just a rewriting of term in a way to isolate what will give the Dirichlet-to-Neumann operator.

Then, we use formula \eqref{e:div-tan-norm} and get for $x=0$
\begin{align*} 
\div(P(\sigma)\nabla\phi) 
    =& \, \div_{\Gamma_0}\left((1+\rho)\det(K)K^{-1}\Pi_0\nabs\phi\right) \\ 
     &+ \left(H_0+\partial_{\norz}\right)( \det(K) \Nor\cdot \nabs\phi)
     - \det(K) (\nabla_{\Gamma_0}\rho\cdot K^{-1}\Pi_0\nabs\phi) \\  
    =& \, (1+\rho)\div_{\Gamma_0}\left(\det(K)K^{-1}\Pi_0\nabs\phi\right)  
     + \left(H_0+\partial_{\norz}\right)(\det(K) \Nor\cdot \nabs\phi).
\end{align*}
In the first equality, we inject the formula for $P(\sigma)\nabla \phi$ in the divergence, and make the simplification due to the presence of $x$ at $x=0$, remark that there is one term remaining due to the $\partial_{\norz}$ (i.e. partial derivative with respect to $x$).
In the second equality, we cancel the first and third terms to commute $1+\rho$ with $\div_{\Gamma_0}$. 
\end{proof}

Let us resume with the proof of Theorem \ref{t:shape-der-formula}.
Now that we have this decomposition and Alinhac's good unknown in mind let us compute the shape derivative of the Dirichlet to Neumann operator.
Expressing it as a product, we can separate the contribution of $\rhop$ to the boundary from the one to the potential function $\phi$:
\begin{equation} \label{e:DtN-shape-leibniz}
\mathrm{d}_{\rho}\G(\rhop)\psi 
    = \mathrm{d}_{\rho}\nabs\phi(\rhop)\cdot\Nor 
     + \nabs\phi\cdot \mathrm{d}_{\rho}\Nor(\rhop).
\end{equation}

Then each part can be simplified as follows
\begin{itemize}
\item 
To simplify the first term, we use Alinhac's good unknown
\begin{equation} \label{chap4-e:deb-calc}
\mathrm{d}_{\rho}\nabs\phi(\rhop) 
    = \mathrm{d}_{\rho}^{\sigma}\nabs\phi(\rhop) 
     + \tfrac{\rhop}{1+\rho}\partial_{\norz}\nabs\phi.
\end{equation}
Then, using the Lemma \ref{l:commmutation-alinhac-transnabla}, as well as the definition of the Dirichlet-to-Neumann operator, we get:
\begin{align*}
\mathrm{d}_{\rho}^{\sigma}\nabs\phi(\rhop) \cdot \Nor
    &= \nabs \mathrm{d}_{\rho}^{\sigma}\phi(\rhop) \cdot \Nor \\
    &= \G (\mathrm{d}_{\rho}^{\sigma}\phi(\rhop)).
\end{align*}
With the definition of $\mathrm{d}_{\rho}^{\sigma}$, and the fact that $\phi$ is fixed on $\Gamma_0$, one gets
\begin{equation}
\mathrm{d}_{\rho}^{\sigma}\phi(\rhop) 
    = - \tfrac{\rhop}{1+\rho} \partial_{\norz}\phi,
\end{equation}
which allows us to simplify the first term of \eqref{chap4-e:deb-calc}

We now focus on the second one, which we wish to transform into the last term of \eqref{e:decomp-lap-transp}.
We remark that 
\begin{equation}
\partial_{\norz}(\det(K)) 
    = \det(K) \mathrm{tr}(K^{-1}\partial_{\norz}K)
    = \det(K)(1+\rho) \mathrm{tr}(K^{-1}d\norz),
\end{equation}
and therefore
\begin{equation}
\partial_{\norz}(\det(K)\nabs\phi\cdot \Nor)
    = \det(K) \partial_{\norz}\nabs\phi\cdot \Nor
     + \det(K)(1+\rho) \mathrm{tr}(K^{-1}d\norz)\nabs\phi\cdot \Nor.
\end{equation}

Finally
\begin{equation} \label{e:div-dir-t-neu-part1}
\mathrm{d}_{\rho}\nabs\phi(\rhop) \cdot \Nor 
    = - \G\left(\tfrac{\rhop}{1+\rho} \partial_{\norz}\phi\right) 
     + \tfrac{\rhop}{\det(K)(1+\rho)}\partial_{\norz}\left( det(K)\nabs\phi\cdot \Nor\right)
     - \rhop \mathrm{tr}(K^{-1}d\norz)\G\psi.
\end{equation}

\item 
For the derivative of $\Nor$, we get
\begin{equation} \label{e:div-dir-t-neu-part2}
\mathrm{d}_{\rho}\Nor(\rhop) 
    = - \,^tK^{-1}\nabla_{\Gamma_0} \rhop 
     - \mathrm{d}_{\rho}(\,^tK^{-1})(\rhop)\, \nabla_{\Gamma_0}\rho.
\end{equation}

\begin{itemize}
\item 
To simplify the first term of \eqref{e:div-dir-t-neu-part2}, we use \eqref{e:divg-prod},
\begin{equation} \label{e:div-dir-t-neu-part3}
\det(K)\nabla_{\Gamma_0}\rhop \cdot K^{-1}\Pi_0\nabs\phi
    = \div_{\Gamma_0}(\rhop \det(K) K^{-1}\Pi_0\nabs\phi)
    - \rhop\, \div_{\Gamma_0}(\det(K) K^{-1}\Pi_0\nabs\phi).
\end{equation}
The second term of the right hand side of \eqref{e:div-dir-t-neu-part3} is already in $\tfrac{\rhop}{1+\rho}\div(P(\sigma)\nabla\phi)$, which will allow us to cancel it later on.

\item
Let us now simplify the second term of \eqref{e:div-dir-t-neu-part2}.
Using the definition \eqref{e:def-K} of $K$, one gets, for $x=0$:
\begin{equation}
d_{\rho}(K)(\rhop)_{|\Gamma_0} = \rhop \, d_X\norz.
\end{equation}
Then, we call the application $\theta:M\in \mathcal{GL}T_{X}\Gamma_0 \mapsto M^{-1}\in \mathcal{GL}T_{X}\Gamma_0$.
One gets 
\begin{equation}
d_{M}\theta\big(\dot{M}\big) = -M^{-1}\dot{M}M^{-1},
\end{equation}
and by applying the chain rule:
\begin{equation}\label{e:der-autre}
\mathrm{d}_{\rho}(\,^tK^{-1})(\rhop)\, \nabla_{\Gamma_0}\rho 
    =  -\rhop \,^tK^{-1}d\norz \,^tK^{-1}\nabla_{\Gamma_0}\rho.
\end{equation}
\end{itemize}
\end{itemize}

Combining \eqref{e:div-dir-t-neu-part1}, \eqref{e:div-dir-t-neu-part2}, \eqref{e:div-dir-t-neu-part3} and \eqref{e:der-autre}, one gets 
\begin{align*}
\mathrm{d}_{\rho}\G(\rhop)\psi 
    =&- \G\left(\tfrac{\rhop}{1+\rho} \partial_{\norz}\phi\right) 
     + \tfrac{\rhop}{\det(K)(1+\rho)}\partial_{\norz}\left( det(K)\nabs\phi\cdot \Nor\right)
     - \rhop \mathrm{tr}(K^{-1}d\norz)\G\psi.\\
    &- \tfrac{1}{\det(K)}\div_{\Gamma_0}(\rhop \det(K) K^{-1}\Pi_0\nabs\phi)
     + \tfrac{\rhop}{\det(K)} \div_{\Gamma_0}(\det(K) K^{-1}\Pi_0\nabs\phi) \\
    &+ \rhop \,^tK^{-1}d\norz \,^tK^{-1}\nabla_{\Gamma_0}\rho\cdot\nabs\phi \\
    =&- \G(\tfrac{\rhop}{1+\rho} \partial_{\norz}\phi) 
      - \tfrac{1}{\det(K)}\div_{\Gamma_0}(\rhop \det(K)K^{-1}\Pi_0\nabs\phi) \\
    &+ \rhop \,^tK^{-1}d\norz \,^tK^{-1}\nabla_{\Gamma_0}\rho\cdot\nabs\phi
      - H_0 \tfrac{\rhop}{1+\rho} \Nor\cdot\nabs\phi
      - \rhop \mathrm{tr}(K^{-1}d\norz)\G\psi\\
    &+ \tfrac{\rhop}{(1+\rho)\det(K)} \left(
           (1+\rho)\div_{\Gamma_0}\left(\det(K)K^{-1}\Pi_0\nabs\phi\right)  
          + \left(H_0+\partial_{\norz}\right)\left(\det(K)\Nor\cdot \nabs\phi\right)\right).
\end{align*}
Then, using \eqref{e:decomp-lap-transp} as well as the fact that $\div(P(\sigma)\nabla\phi)=0$, we get:
\begin{align} \notag
\mathrm{d}_{\rho}\G(\rhop)\psi  
    =&- \G(\tfrac{\rhop}{1+\rho} \partial_{\norz}\phi) 
      - \tfrac{1}{\det(K)}\div_{\Gamma_0}(\rhop \det(K)K^{-1}\Pi_0\nabs\phi) \\
     &+ \rhop \,^tK^{-1}d\norz \,^tK^{-1}\nabla_{\Gamma_0}\rho\cdot\nabs\phi
      - H_0 \tfrac{\rhop}{1+\rho} \Nor\cdot\nabs\phi
      - \rhop tr(K^{-1}L_X)\G\psi.
\end{align}
\end{proof}
%


\section{Linearisation of the equation} \label{s:Lin-Equa}


\subsection{Reminder on the biological model} \label{ss:remind-bio}

We are given a cell, inside an ambient liquid.
We denote $\Omega^{\mathrm{tot}}$ the reference domain for the total experience (in this Chapter it is considered fixed, but it could also vary over time).
We fix $\Omega^i_0$ the reference domain for the interior of the cell and $\Omega^i(t)$ the actual cell at time~$t$.
We denote respectively
\begin{equation}
\Omega^{e}_0:= \Omega^{\mathrm{tot}}\setminus \Omega^i_{0}
\quad \text{and} \quad
\Omega^e(t) := \Omega^{\mathrm{tot}}\setminus \Omega^i(t),
\end{equation}
the reference and actual domain for the ambiant fluid.
We denote by $\Gamma_0 := \partial\Omega^i_0$ the reference interface between the cell and the ambiant liquid and by $\Gamma(t) := \partial\Omega^i(t)$ the actual boundary of the cell.
We denote by $\Gamma^{\mathrm{ext}} := \partial\Omega^{\mathrm{tot}} = \partial\Omega^e_0 \setminus \Gamma_0$ the outer boundary of the  domain.
Let us also recall that $\norz^e(X)$, the outward unit normal vector to the outside domain $\Omega^e_0$ at a point $X\in \Gamma_0$, is opposite to the one to $\Omega^{in}_{0}$:
\begin{equation*}
\norz^e := -\norz.
\end{equation*}

\begin{figure} \centering  
\includegraphics[scale= 0.5]{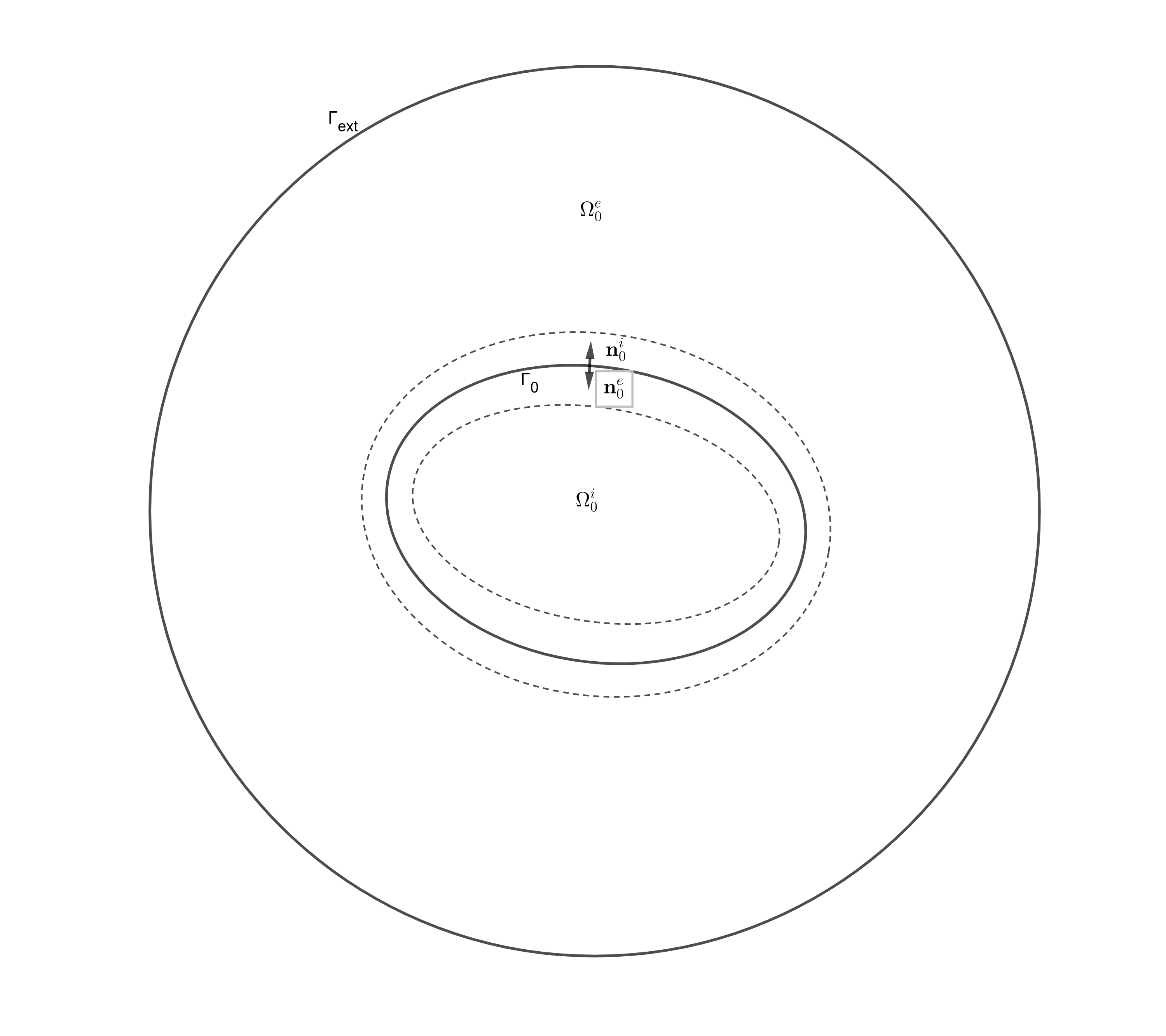}
 \caption{Interior and exterior domains} \label{fig2} 
 \end{figure}

We make the hypothesis that the deformation of the cell remains small over time.
By that, we mean $\Gamma$ is parametrisable over $\Gamma_0$ for all times considered here :
\begin{equation*}
\Gamma(t) = \{X+\rho(X,t)\norz(X) \,;\, X\in \Gamma_0\}.
\end{equation*}
The main unknown of our problem is the shape of the cell, or its parametrisation $\rho$.
With this parametrization, we recall that the unit vector normal to the surface $\Gamma$ is given by:
\begin{equation}
\nor(X+\rho(X,t)\norz(X)) := \tfrac{\Nor}{|\Nor|},
\end{equation}
where $\Nor := \Nor(\rho,X,t)$ is the vector defined in \eqref{e:unit-vect} that we used to define the Dirichlet-to-Neumann operator. 
We recall that $|\Nor|$ is given by:
\begin{equation}
|\Nor| =  \sqrt{1+\left|\,^tK^{-1}\nabla_{\Gamma_0}\rho\right|^2}.
\end{equation}

We recall that the main data of the problem are :
\begin{itemize}
\item the initial shape of the cell $\rho_0(X)$,
\item the concentration of (MT1-MMPs) at the boundary $\Psi$.
\end{itemize}

The phenomenae at stakes are the diffusion of the degraded ligant outside the cell,
\begin{subequations}
\begin{align}
\Delta \Phi^e &= 0, \quad \text{on } \Omega^e, \\
\partial_{\nor^e}\Phi^e=-\partial_{\nor}\Phi^e &= \Psi, \quad \text{on } \Gamma, \\
\Phi^e &= 0, \quad \text{on } \Gamma^{\mathrm{ext}};
\end{align}
\end{subequations}
and the signal inside the cell corresponding to aforementioned diffusion,
\begin{subequations}
\begin{align}
\Delta \Phi^i &= 0, \quad \text{on } \Omega^i, \\
\Phi^i_{|\Gamma(t)} &= \Phi^e_{|\Gamma(t)}, \quad \text{on } \Gamma.
\end{align}
\end{subequations}
At the end the cell moves due to the polymerisation of actin. 
We model this phenomenon by assuming  that a point $\alpha = X + \rho(X,t)\norz(X) \in \Gamma(t)$ on the boundary moves with speed $v$ given by
\begin{equation} \label{e:vitesse-signal}
v(\alpha, t) = \nabla \Phi^i[\rho(t),\Psi(t)] (\alpha). 
\end{equation}

\subsection{Derivation of the evolution equation for the boundary.} \label{ss:evol-equa}

We denote by $\sigma^i: \Omega_0^i \rightarrow \Omega^i$ and $\sigma^e: \Omega_0^e \rightarrow \Omega^e$ the two diffeomorphisms associated with $\Omega^i$, and $\Omega^e$ through \eqref{e:syst-diffeo-def}.
We define $\sigma : \Omega^{\mathrm{tot}}\rightarrow \Omega^{\mathrm{tot}}$, through 
\begin{equation}
\sigma_{|\Omega_0^i} = \sigma^i,
\quad \text{and} \quad
\sigma_{|\Omega_0^e} = \sigma^e.
\end{equation}
Remark that $\sigma$ doesn't have a jump at the boundary $\Gamma_0$ due to the fact that $\sigma^i$ and $\sigma^e$ have the same expression in a neighborhood of the boundary.

Let $\phi^i := \Phi^i\circ \sigma^i$ and $\phi^e := \Phi^e\circ \sigma^e$ and $\psi(t,X) := \Psi(X+\rho(X,t)\norz(X),t)$.
By definition of the Dirichlet-to-Neumann operator, we have:
\begin{equation} \label{e:psi-phie}
\G^e[-\rho](\phi^e_{|\Gamma_0}) = |\Nor|\psi,
\end{equation}
where $\G^e$ is the Dirichlet-to-Neumann operator on $\Omega^e$ with homogeneous boundary condition on $\Gamma^{\mathrm{ext}}$.
Remark that since $\norz^e = -\norz$, we have to take the parametrisation $-\rho$ instead of $\rho$ in $\G^e$ in order to describe the boundary $\Gamma$.

\begin{defn}
Let $\rho \in W^{2,\infty}(\Gamma_0)$.
For $\psi\in H^{\tfrac{1}{2}}(\Gamma_0)$,
we introduce $\phi^{e}[\rho,\psi]$ the solution of the Neumann elliptic problem:
\begin{subequations}
\begin{align}
\div(P(\sigma)\nabla \phi^e) &= 0      \quad \text{on } \Omega^e_0, \\
\phi^e &= 0                              \quad \text{on } \Gamma^{\mathrm{ext}}, \\
\tfrac{1}{\det(K)}P(\sigma)\nabla\phi^e\cdot (-\norz) &= \psi   \quad \text{on } \Gamma_0.
\end{align}
\end{subequations}
The Neumann-to-Dirichlet operator $\F^e$ is defined by:
\begin{equation}
\F^e[\rho](\psi) := \phi^e_{|\Gamma_0}\in H^{\tfrac{3}{2}}(\Gamma_0).
\end{equation}
\end{defn}

As for the Dirichlet-to-Neumann operator, $\F^e$ can be extended from functions in $H^{s}$ to function in $H^{s+1}$ (assuming that $\rho$ belongs to $W^{r+1,\infty}(\Gamma_0)$, with $r\geq s+1/2$ and $r\in\mathbb{N}$).
It can also be seen as the inverse of the Dirichlet-to-Neumann operator $\G^e$.
For $\psi \in H^{s+1}(\Gamma_0)$ and $\Psi \in H^{s}(\Gamma_0)$, one has:
\begin{equation}
\G^e[-\rho](\F^e[\rho](\Psi)) = \Psi,
\quad \text{ and } \quad
\F^e[\rho]\left(\G^e[-\rho](\psi)\right) = \psi.
\end{equation}
With Neumann-to-Dirichlet, we can express \eqref{e:psi-phie} as
\begin{equation} \label{e:eq-prot-proof1}
\phi^e_{|\Gamma_0} = \F^e[\rho]\left(|\Nor|\psi\right).
\end{equation}

\begin{rem}
Following the footstep of \cite{Cellule1}, we have the boundary condition
\begin{equation} \label{e:weird-cond}
\phi^e = 0                              \quad \text{on } \Gamma^{\mathrm{ext}},
\end{equation}
and in particular, our Neumann-to-Dirichlet operator is defined on all functions.
Classically for the solution of 
\begin{subequations}
\begin{align}
\Delta \Phi &= 0, \quad \text{on } \omega, \\
\partial_{\nor}\Phi &= \Psi, \quad \text{on } \partial\omega, 
\end{align}
\end{subequations}
one requires the condition 
\begin{equation}
\int_{\partial\omega}{\Psi} = 0,
\end{equation}
in order to satisfy Stoke's theorem and have existence of solution.
Due to \eqref{e:weird-cond}, this is not the case here.
\end{rem}

Let $\alpha_0 = X_0 + \rho_0(X_0)\norz(X_0) \in \Gamma(0)$ and $\alpha(t)$ be the trajectory described by \eqref{e:vitesse-signal}, which can be rewritten as:
\begin{equation} \label{e:vitesse-signal-2}
\tfrac{d\alpha}{dt} =\nabla \Phi^i (\alpha(t)).
\end{equation}
We decompose $\alpha(t)$ under the base $T\Gamma_0 \oplus T^{\perp}\Gamma_0$ of $\tub$:
\begin{equation}
\alpha(t) =: X(t) + \rho(t,X(t)) \norz(X(t)). 
\end{equation}
Then we derive $\alpha$ with respect to time. 
We denote by $\Xp(t) := \tfrac{dX}{dt}\in T_{X(t)}\Gamma_0$.
\begin{equation}
\tfrac{d\alpha}{dt}
    = \Xp 
     + \partial_t\rho \, \norz
     + (\nabla_{\Gamma_0}\rho \cdot \Xp) \, \norz
     + \rho\, d_X\norz (\Xp).
\end{equation}
This can be rewritten, using the matrix $K$ defined in \eqref{e:def-K}, as
\begin{equation}
\tfrac{d\alpha}{dt}
    = \partial_t\rho \, \norz
     + (\nabla_{\Gamma_0}\rho \cdot \Xp) \, \norz
     + K(\Xp).
\end{equation}
Now, we take the scalar product against $\Nor$ defined in \eqref{e:unit-vect}, in order to supress the tangential component:
\begin{equation*}
\nabla \Phi^i \cdot \Nor 
    = \partial_t \rho
     + \nabla_{\Gamma_0}\rho \cdot \Xp
     - K(\Xp) \cdot \,^tK^{-1}(\nabla_{\Gamma_0}\rho). 
\end{equation*}
The last two terms cancel out, we get
\begin{align} \notag
\partial_t\rho 
    &= \nabla^{\sigma} \phi^i \cdot \Nor \\ \label{e:eq-prot-proof2}
    &= \G^i[\rho](\phi^e_{|\Gamma_0}).
\end{align}

Combining \eqref{e:eq-prot-proof1} and \eqref{e:eq-prot-proof2}, we get the following proposition:

\begin{prop} \label{p:eq-prot}
Let $r\in \N$ and  $\rho \in U^{r+1}(\Gamma_0)$.
Let $0\leq s\leq r-1/2$ be a real number.
The operator $\A$ defined by 
\begin{equation}
\A[\rho] : \psi\in H^{s}(\Gamma_0) \mapsto \G^i[\rho](\F^e[\rho](\psi)) \in H^{s}(\Gamma_0),     
\end{equation}
is well defined and bounded.

Moreover, the equation evolution for the boundary of the cell reads as 
\begin{equation}\label{e:prot-equa}
\partial_t\rho - \A[\rho](|\Nor|\psi) = 0.
\end{equation}
\end{prop}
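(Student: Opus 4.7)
The statement has two components and, once the machinery of the preceding subsections is in place, is largely a bookkeeping exercise.

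The plan for the well-posedness of the operator $\A$ is to chain the continuity results already established for $\G$ and $\F^e$. From the proposition following the definition of the Dirichlet-to-Neumann operator, we know that for $\rho \in U^{r+1}(\Gamma_0)$ with $r \geq s'$, the operator $\G^i[\rho]$ extends to a bounded map $H^{s'+1/2}(\Gamma_0) \to H^{s'-1/2}(\Gamma_0)$. Since $\F^e[\rho]$ is constructed as the inverse of $\G^e[-\rho]$, a parallel statement for the Neumann-to-Dirichlet operator gives that $\F^e[\rho]$ maps $H^{s'-1/2}(\Gamma_0)$ to $H^{s'+1/2}(\Gamma_0)$ under the same hypothesis. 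I would then apply this with $s' = s+1/2$: the hypothesis $s \leq r - 1/2$ is exactly $s' \leq r$, so $\F^e[\rho]$ sends $\psi \in H^s(\Gamma_0)$ to $H^{s+1}(\Gamma_0)$, and $\G^i[\rho]$ sends $H^{s+1}(\Gamma_0)$ back to $H^s(\Gamma_0)$. Composition yields a bounded operator from $H^s(\Gamma_0)$ to itself, with operator norm controlled by the product of the two individual norms.

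For the evolution equation, the derivation has essentially been carried out in Paragraph~\ref{ss:evol-equa} and the work is merely to tie together the two identities \eqref{e:eq-prot-proof1} and \eqref{e:eq-prot-proof2}. Specifically, \eqref{e:eq-prot-proof1} records that the transported exterior potential on the reference boundary is $\phi^e_{|\Gamma_0} = \F^e[\rho](|\Nor|\psi)$, and this follows by inverting the Dirichlet-to-Neumann relation \eqref{e:psi-phie} using the boundary flux datum. The identity \eqref{e:eq-prot-proof2} records that the normal velocity of $\Gamma(t)$, after projection against $\Nor$ and the cancellation of the tangential components $K(\Xp)$ and $-K(\Xp)\cdot\,{}^tK^{-1}\nabla_{\Gamma_0}\rho$, equals $\G^i[\rho](\phi^i_{|\Gamma_0})$. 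Using the coupling condition $\Phi^i_{|\Gamma} = \Phi^e_{|\Gamma}$, which transports under $\sigma$ to $\phi^i_{|\Gamma_0} = \phi^e_{|\Gamma_0}$, and substituting the expression for $\phi^e_{|\Gamma_0}$, we obtain
\begin{equation*}
\partial_t \rho = \G^i[\rho]\bigl(\F^e[\rho](|\Nor|\psi)\bigr) = \A[\rho](|\Nor|\psi),
\end{equation*}
which is the desired equation.

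The only genuinely delicate point is to make sure the regularity index $s$ is compatible with all the intermediate mapping properties simultaneously, in particular that the loss of half a derivative in each of $\G^i$ and $\F^e$ is correctly accounted for and that the restriction $s \leq r - 1/2$ is in fact sharp for the composition to stay within Sobolev spaces on $\Gamma_0$. Everything else—namely the tangential/normal cancellation producing $\partial_t \rho = \nabs \phi^i \cdot \Nor$ on the boundary, and the matching of interior and exterior traces—has already been verified in the lead-up, so no additional computation is required.
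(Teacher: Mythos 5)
Your proposal is correct and follows essentially the same route as the paper: the boundedness of $\A[\rho]$ is obtained by composing the stated mapping properties of $\F^e[\rho]:H^s\to H^{s+1}$ and $\G^i[\rho]:H^{s+1}\to H^s$ (both requiring $r\geq s+1/2$, which is exactly the hypothesis $s\leq r-1/2$), and the evolution equation is obtained by combining \eqref{e:eq-prot-proof1} and \eqref{e:eq-prot-proof2} via the trace coupling $\phi^i_{|\Gamma_0}=\phi^e_{|\Gamma_0}$, which is precisely how the paper assembles the proposition from the derivation in Paragraph~\ref{ss:evol-equa}.
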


\begin{rem}
Instead of working with the flux of concentration  of enzyme $\Psi$ on $\Gamma$, we are working with the flux $\psi$ over $\Gamma_0$.
Mathematically, this makes sense, as one need to know $\Gamma$, which is our unknown in order to prescribe $\Psi$.
Therefore $\Psi$ is quite an awkward variable to work with, and we prefer the more stable $\psi$.
\end{rem}

\begin{rem}
The matrixes
$$ K^i_{\rho} = \Id + \rho d\norz^i = \Id + (-\rho) d\norz^e = K^e_{-\rho}, $$
are equal, so we write $K$ for one or the other.
\end{rem}

\subsection{Linearization of the equation \eqref{e:prot-equa} around $\rho = 0$.} \label{ss:lin-equa}

Let us give the derivative with respect to the shape of $\A$ and $\F^e$.
At the end, we apply this formula with $\rho = 0$ to get the linearised equation, but we include the general case for the sake of completness.

\begin{prop} \label{p:A-shape-der}
The shape derivative of $\A$ and $\F^e$ with respect to $\rho$ are given by 
\begin{equation} \label{e:der-shape-F}
d_{\rho}\F^e(\rhop)\psi
    := -\rhop w^e 
      - \F^e(\tfrac{1}{\det(K)}\div_{\Gamma_0}(\rhop \Ve))
      + a_1(\rho,\rhop,\psi),
\end{equation}
and 
\begin{equation} \label{e:der-shape-A}
d_{\rho}\A(\rhop)\psi  
    := - \G^i(\rhop \hw)
      - \tfrac{1}{\det(K)}\div(\rhop\tilde{V})
      - (\Id + \A)\left(\tfrac{1}{\det(K)}\div(\rhop \Ve)\right)
      - b_0(\rho,\rhop,\psi),
\end{equation}
where :
\begin{subequations}\label{e:syst-def-term-der-A}
\begin{align} \label{e:der-shape-def-we}
w^e          
    &  
     := \frac{\psi-G\nabla_{\Gamma_0}\rho\cdot\nabla_{\Gamma_0}\F^e\psi}{1+|\,^tK^{-1}\nabla_{\Gamma_0}\rho|^2}, \\ \label{e:der-shape-def-wi}
w^i          
    &  
     := \frac{\A\psi+G\nabla_{\Gamma_0}\rho\cdot\nabla_{\Gamma_0}\F^e\psi}{1+|\,^tK^{-1}\nabla_{\Gamma_0}\rho|^2}, \\ \label{e:der-shape-def-hw}
\hat{w}
    &  
     := \frac{(\Id+\A)\psi}{1+|\,^tK^{-1}\nabla_{\Gamma_0}\rho|^2}, \\ \label{e:der-shape-def-Ve}
\mathbf{V}^e 
    &  
     := \det(K)\, G(\nabla_{\Gamma_0}\F^e\psi+w^e\nabla_{\Gamma_0}\rho), \\ \label{e:der-shape-def-Vi}
\mathbf{V}^i 
    &  
     := \det(K)\, G(\nabla_{\Gamma_0}\F^e\psi-w^i\nabla_{\Gamma_0}\rho), \\ \label{e:der-shape-def-tV}
\mathbf{\tilde{V}}
    &  
     :=\det(K)\, \hat{w}\, G\nabla_{\Gamma_0}\rho, \\ \label{e:der-shape-def-a1}
a_1(\rho, \rhop, \psi)
    &:= \F^e\left( \rhop \left(\frac{ H_0}{1-\rho}+\mathrm{tr}(K^{-1}d\norz)\right)\psi
     - \frac{\rhop}{\det(K)}\mathbf{V}^{e}\cdot d\norz\,^{t}K^{-1}\nabla_{\Gamma_0}\rho\right), \\ \notag
b_0(\rho,\rhop,\psi) 
    &:= +\A \left(\rhop \left(\frac{ H_0}{1-\rho}+\mathrm{tr}(K^{-1}d\norz)\right)\psi\right)
     - \rhop \left(\frac{ H_0}{1+\rho}+\mathrm{tr}(K^{-1}d\norz)\right) \A(\psi) \\ \label{e:der-shape-def-b0}
    &- \A \left(\tfrac{\rhop}{\det(K)}\mathbf{V}^e\cdot d\norz\,^{t}K^{-1}\nabla_{\Gamma_0}\rho\right)
     + \tfrac{\rhop}{\det(K)}\mathbf{V}^i\cdot d\norz\,^{t}K^{-1}\nabla_{\Gamma_0}\rho.
\end{align}
\end{subequations}
\end{prop}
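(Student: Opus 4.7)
The plan is to deduce both formulas from Theorem \ref{t:shape-der-formula} by applications of the chain rule: first to $\F^e$ through its characterization as the inverse of $\G^e[-\rho]$, and then to $\A$ through its definition as the composition $\G^i \circ \F^e$.

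For the shape derivative of $\F^e$, I would start from the identity $\G^e[-\rho]\bigl(\F^e[\rho]\psi\bigr) = \psi$ established in Paragraph \ref{ss:evol-equa}, and differentiate it with respect to $\rho$ in the direction $\rhop$. The chain rule yields
\begin{equation*}
\bigl(\mathrm{d}_{\rho}\G^e[-\rho](\rhop)\bigr)\bigl(\F^e\psi\bigr) + \G^e[-\rho]\bigl(\mathrm{d}_{\rho}\F^e(\rhop)\psi\bigr) = 0,
\end{equation*}
which I would then invert by applying $\F^e = (\G^e[-\rho])^{-1}$ to both sides in order to solve for $\mathrm{d}_{\rho}\F^e(\rhop)\psi$. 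The quantity $\mathrm{d}_{\rho}\G^e[-\rho](\rhop)$ would be computed by applying Theorem \ref{t:shape-der-formula} to the exterior Dirichlet-to-Neumann operator, substituting $\rho \leadsto -\rho$, $\rhop \leadsto -\rhop$ and $\norz \leadsto \norz^e = -\norz$ (so that $d\norz \leadsto -d\norz$ and $H_0 \leadsto -H_0$), while the matrix $K^e_{-\rho} = K$ remains unchanged by the remark following Proposition \ref{p:eq-prot}. Expanding the three resulting terms and matching them against the definitions \eqref{e:der-shape-def-we}, \eqref{e:der-shape-def-Ve} and \eqref{e:der-shape-def-a1} should recover formula \eqref{e:der-shape-F}.

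For the shape derivative of $\A$, the Leibniz rule applied to $\A[\rho] = \G^i[\rho]\circ\F^e[\rho]$ gives
\begin{equation*}
\mathrm{d}_{\rho}\A(\rhop)\psi = \bigl(\mathrm{d}_{\rho}\G^i(\rhop)\bigr)\bigl(\F^e\psi\bigr) + \G^i\bigl(\mathrm{d}_{\rho}\F^e(\rhop)\psi\bigr).
\end{equation*}
For the first summand I would apply Theorem \ref{t:shape-der-formula} directly to $\G^i$ with Dirichlet data $\F^e\psi$; using the relation $\G^i(\F^e\psi) = \A\psi$, this produces the quantities $w^i$ of \eqref{e:der-shape-def-wi} and $\mathbf V^i$ of \eqref{e:der-shape-def-Vi}, together with a curvature term proportional to $\rhop\,a_0^i(\rho,\F^e\psi)$. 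For the second summand I would substitute the formula \eqref{e:der-shape-F} obtained in the previous step, distribute $\G^i$ through, and use $\G^i\circ\F^e = \A$: this is precisely how the composite factor $\Id + \A$ will emerge in \eqref{e:der-shape-A}. The final step is purely algebraic: the identity $w^e + w^i = \hat w$ from \eqref{e:der-shape-def-hw} merges the two leading Dirichlet-to-Neumann contributions, the identity $\mathbf V^e - \mathbf V^i = \det(K)\,\hat w\,G\nabla_{\Gamma_0}\rho = \tilde{\mathbf V}$ reshuffles the divergence terms into the combination appearing in \eqref{e:der-shape-A}, and the surviving zeroth-order contributions $\rhop\,a_0^i(\rho,\F^e\psi) + \G^i(a_1)$ collapse into the expression $-b_0$ of \eqref{e:der-shape-def-b0}.

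The computation itself is a direct application of Theorem \ref{t:shape-der-formula} together with the chain rule, so I do not expect any conceptual difficulty. The main obstacle is purely organizational: every quantity in sight depends on $\rho$ implicitly through $K$, $\Nor$, $\norz^e$, $H_0$ and the harmonic extensions $\phi^i,\phi^e$, and one must keep meticulous track of the cascade of sign changes induced by $\norz^e = -\norz$ in the exterior setting in order to recognize, in the final output, the precise groupings $w^e,\,w^i,\,\hat w,\,\mathbf V^e,\,\mathbf V^i,\,\tilde{\mathbf V},\,a_1,\,b_0$ displayed in \eqref{e:syst-def-term-der-A}.
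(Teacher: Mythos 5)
Your proposal follows essentially the same route as the paper: differentiate the identity $\G^e[-\rho]\circ\F^e[\rho]=\Id$ and invert to get \eqref{e:der-shape-F}, apply Theorem \ref{t:shape-der-formula} with the substitution $\rho\leadsto-\rho$ (tracking the sign flips in $\norz$, $d\norz$, $H_0$ while $K$ is unchanged), then use the Leibniz rule on $\A=\G^i\circ\F^e$ and merge terms via $w^e+w^i=\hw$ and $\Ve-\Vi=\tiV$. This matches the paper's proof step for step, including the algebraic regroupings at the end.
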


\begin{proof}
This Proposition is an application of Leibniz's rule for operators depending on a parameter.
Let us start from
\begin{equation}
\G^e[-\rho](\F^e[\rho](\psi))= \psi,
\end{equation}
and differentiate it with respect to $\rho$ in the direction $\rhop$.
We get that 
\begin{equation}\label{art4-e:leib-op}
-d_{-\rho}\G^e(\rhop)(\F^e[\rho](\psi)) 
    + \G^e[-\rho](d_{\rho}\F^e(\rhop)(\psi)) 
    = 0,
\end{equation}
Now apply $\F^e$ (which is linear) to the whole equation :
\begin{equation} 
d_{\rho}\F^e(\rhop)(\psi) 
    = \F^e[\rho]\left(d_{-\rho}\G^e(\rhop)\big(\F^e[\rho](\psi)\big)\right).
\end{equation}
Then, we apply Theorem \ref{t:shape-der-formula}, and replace term by term, which gives:
\begin{equation} 
\mathrm{d}_{-\rho}\G^e(\rhop)\Psi 
    = -\G^e[-\rho](\rhop w^e) 
    - \tfrac{1}{\det(K)}\div_{\Gamma_0}(\rhop \mathbf{V}^e)
    + \rhop\, a_0(\rho,\Psi),
\end{equation}
with
\begin{subequations}
\begin{align}
w^e          
    & = \frac{\G^e[-\rho]\Psi-G\nabla_{\Gamma_0}\rho\cdot\nabla_{\Gamma_0}\Psi}{1+|\,^tK^{-1}\nabla_{\Gamma_0}\rho|^2}, \\
\mathbf{V}^e 
    &  = \det(K)\, G(\nabla_{\Gamma_0}\Psi+w^e\nabla_{\Gamma_0}\rho), \\
a_0(\rho, \psi)
    &:= +\left(\frac{ H_0}{1-\rho}+\mathrm{tr}(K^{-1}d\norz)\right)\G^{e}[-\rho]\Psi
     - \frac{1}{\det(K)}\mathbf{V}^{e}\cdot d\norz\,^{t}K^{-1}\nabla_{\Gamma_0}\rho,
\end{align}
\end{subequations}
Then we plug in $\F^e[\rho](\psi)$ instead of $\psi$ and apply $\F^e$ and get \eqref{e:der-shape-F} with $w^e$, $\Ve$ and $a_1$ defined through \eqref{e:der-shape-def-we}, \eqref{e:der-shape-def-Ve} and \eqref{e:der-shape-def-a1}.

Similarly, by Leibniz's rule, we get 
\begin{equation}
d_{\rho}\A(\rhop)(\Psi) 
    = d_{\rho}\G^i(\rhop)\big(\F^e[\rho](\psi)\big)
     + \G^i[\rho]\big(d_{\rho}\F^e(\rhop)(\psi)\big).
\end{equation}
Which when we write it term by term using the definitions of \eqref{e:syst-def-term-der-A} gives:
\begin{align*}
d_{\rho}\A(\rhop)(\Psi) 
    =& - \G^i(\rhop w^i) 
       -\tfrac{1}{\det(K)}\div(\rhop \Vi) 
       + \rhop a_0(\rho,\F^e\psi) \\
     & - \G^i(\rhop w^e)
       - \G^i\F^e\left(\tfrac{1}{\det(k)}\div_{\Gamma_0}(\rhop \Ve)\right)
       + \G^i(a_1(\rho,\rhop,\psi)) \\
    =& - \G^i(\rhop (w^i+w^e))
       - \tfrac{1}{\det(K)}\div(\rhop (\Vi-\Ve)) 
       - \tfrac{1}{\det(K)}\div(\rhop \Ve) \\
     & - \A\left(\tfrac{1}{\det(k)}\div_{\Gamma_0}(\rhop \Ve)\right)
       + \rhop a_0(\rho,\F^e\psi) 
       + \G^i(a_1(\rho,\rhop,\psi)) \\
    =& - \G^i(\rhop \hw)
       - \tfrac{1}{\det(K)}\div(\rhop \tiV)
       - (\Id+\A)\left(\tfrac{1}{\det(k)}\div_{\Gamma_0}(\rhop \Ve)\right)
       + b_0(\rho,\rhop,\psi).
\end{align*}
\end{proof}

\begin{rem}
In \eqref{art4-e:leib-op}, we differentiate like a product and note like a composition, because the derivative is not on the variable over which we are doing a composition, but over $\rho$.
Thus, we need to differentiate something that is mainly a tensorial product.
\end{rem}

The first step in solving an equation of the form of \eqref{e:prot-equa} is to look at the linearised equation associated to it.
This means writing the parametrisation of the boundary $\rho$ under the form $\rho = \rho_0 +\varepsilon\rhop$ and look at the equation verified by $\rhop$ when $\varepsilon$ goes to $0$.
By definition of the shape derivative, this equation can be rewritten as follows
\begin{equation}
\partial_t \rhop - d_{\rho_0}\A(\rhop)(|\Nor|\psi) = \A[\rho_0](d_{\rho_0}(|N|)(\rhop) \, \psi).
\end{equation}
Without loss of generality, since the reference domain $\Gamma_0$ can be anything, we look at the case $\rho_0 = 0$.
We have the following Corollary of Proposition \ref{p:A-shape-der}:

\begin{cor} \label{c:prot-linearised}
The linearization of \eqref{e:prot-equa} around $\rho_0 =0$ can be written as 
\begin{equation} \label{e:prot-linearised}
\partial_t\rhop 
    + \A_1(\rhop,\psi)
    = 0.
\end{equation}
Here the operator $\A_1 = d_0 \A$ is given by
\begin{equation}
\A_1(\rhop,\psi) 
    = \G^i_0(\rhop\hw)
    + (\Id+\A_0)\div(\rhop \mathbf{V}^e)
    + b_0(\rhop,\psi),
\end{equation}
where $\G^i_0 := \G^i[0]$, $\F^e_0:= \F^e[0]$, $\A_0 := \A[0]$ and  $\tw$, $\mathbf{V}_e$ and $b_0$ are given by \eqref{e:syst-def-term-der-A}, namely:
\begin{subequations}
\begin{align}
\tilde{w}
    &  
     = (\Id+\A_0)\psi, \\
\mathbf{V}^e 
    &  
     = \nabla_{\Gamma_0}\F^e_0\psi, \\
b_0(\rhop,\psi)
    &= \A_0(2H_0\rhop \psi) - 2H_0\rhop \A_0(\psi).
\end{align}
\end{subequations}
\end{cor}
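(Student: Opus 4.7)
The plan is to specialise the general shape-derivative formula of Proposition \ref{p:A-shape-der} to $\rho_0 = 0$ and to account for the renormalisation factor $|\Nor|$ multiplying $\psi$ in \eqref{e:prot-equa}. Writing $\rho = \varepsilon\rhop$ and identifying the linear term in $\varepsilon$ in $\partial_t \rho - \A[\rho](|\Nor|\psi) = 0$, Leibniz's rule gives
\[
\partial_t \rhop = d_0\A(\rhop)(|\Nor|_0\,\psi) + \A_0\!\left(d_0|\Nor|(\rhop)\,\psi\right).
\]
Since $|\Nor|^2 = 1 + |{}^tK^{-1}\nabla_{\Gamma_0}\rho|^2$ and $\nabla_{\Gamma_0}\rho$ vanishes at $\rho = 0$, one immediately reads off both $|\Nor|_0 = 1$ and $d_0|\Nor|(\rhop) = 0$; the linearised equation thus collapses to $\partial_t\rhop - d_0\A(\rhop)(\psi) = 0$, which is precisely $\partial_t\rhop + \A_1(\rhop,\psi) = 0$ with $\A_1 := -d_0\A$.

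Next I would substitute $\rho = 0$ into formula \eqref{e:der-shape-A}. From \eqref{e:def-K} one gets $K|_{\rho=0,\,x=0} = \Id$, hence $G = \Id$ and $\det(K) = 1$; together with $\nabla_{\Gamma_0}\rho = 0$, the definitions \eqref{e:der-shape-def-hw}--\eqref{e:der-shape-def-tV} give $\tiV = 0$, $\hw = (\Id + \A_0)\psi$, and $\Ve = \nabla_{\Gamma_0}\F^e_0\psi$. After reversing the sign, the first three terms of $-d_0\A$ become exactly $\G^i_0(\rhop\hw) + (\Id + \A_0)\div(\rhop\Ve)$, as required.

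Finally, it remains to evaluate $b_0(0,\rhop,\psi)$. The scalars $\tfrac{H_0}{1\pm\rho} + \mathrm{tr}(K^{-1}d\norz)$ both collapse at $\rho = 0$ to $H_0 + \mathrm{tr}(\weing) = 2H_0$, using the identity $\mathrm{tr}(\weing) = H_0$ which follows from $H_0 = \div_{\Gamma_0}(\norz)$ together with \eqref{e:def-weingarten}. The two summands of \eqref{e:der-shape-def-b0} proportional to $\Ve \cdot d\norz \cdot {}^tK^{-1}\nabla_{\Gamma_0}\rho$ drop out since $\nabla_{\Gamma_0}\rho = 0$ at the reference configuration, leaving $b_0(0,\rhop,\psi) = \A_0(2H_0\rhop\psi) - 2H_0\rhop\A_0(\psi)$. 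The whole computation is a routine specialisation; the only care needed is in tracking the overall sign coming from $\A_1 = -d_0\A$ and in checking that the $|\Nor|$-contribution really vanishes at $\rho = 0$, neither of which presents a serious obstacle.
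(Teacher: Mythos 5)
Your proof is correct and follows essentially the same route as the paper: the paper's own proof consists precisely of the observation that $d_{0}(|\Nor|)(\rhop)=0$, leaving the term-by-term specialisation of Proposition \ref{p:A-shape-der} at $\rho=0$ (including the identity $H_0+\mathrm{tr}(\weing)=2H_0$, which the paper only records in a remark) implicit. Your write-up is in fact slightly more complete, and you correctly flag the sign convention $\A_1=-d_0\A$ needed to reconcile the displayed formula for $\A_1$ with the equation $\partial_t\rhop+\A_1(\rhop,\psi)=0$.
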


\begin{proof}
Since $|\Nor| = \sqrt{1+\left|\,^tK^{-1}\nabla_{\Gamma_0}\rho\right|^2}$, we have 
$$d_{\rho}(|\Nor|)(\rhop) = \tfrac{\,^tK^{-1}\nabla\rhop\cdot\,^tK^{-1}\nabla\rho}{\sqrt{1+\left|\,^tK^{-1}\nabla_{\Gamma_0}\rho\right|^2}}$$ 
and in particular for $\rho=0$, we get $d_{0}(|N|)(\rhop) = 0$ for all $\rhop$.
\end{proof}

\begin{rem}
The term $(H_0-(1+\rho)\mathrm{tr}(K^{-1}d\norz))$, which in a way was the difference between the curvature $\Gamma_0$ and $\Gamma$, is equal to zero when $\rho=0$.
\end{rem}


\section{Well posedness of the linearised equation in $H^1$} \label{s:wp}


\subsection{Regularising properties for $\Id-\A_0$} \label{ss:reg-prop}

When we look at $\A_1$, we have two terms that we need to control in order to study \eqref{e:prot-linearised}.
The first one is $\G^i_0(\rhop \hw)$: this term is parabolic, and therefore mostly useful and not detrimental in the energy estimates.
The second one is  $(\Id+ \A_0)\div(\rhop \Ve)$.
Normally divergence terms in energy estimates are treated through skew-symmetry properties.
Due to the anti-symmetric nature of $\div$, when doing energy estimates, those terms do vanish, up to lower order terms.
The presence of $\Id + \A_0$ make this harder to prove, as one has to bound the commutator between $\A_0$ and $\div$ to use the anti-symmetry.
Instead of doing that, we will prove here that $\Id-\A_0$ is a regularising operator, allowing us to cancel the part of this term that is less clearly skew-adjoint.
Although this technique is powerful, as it allows us to skip the commutation estimates, it is less easy to generalise.
When one tries to study the full equation \eqref{e:prot-equa} through quasi-linearisation, one need good estimates in $\rho$ for the solution of \eqref{e:prot-linearised}, estimates which seem harder to obtain in this way. \\

The following lemma gives a first order approximation of the Dirichlet-to-Neumann operator.
It is a direct consequence of \cite{TaylorII}, Chapter VII, paragraph 11 and 12.

\begin{lem}\label{l:DtN-approx}
Let $\omega$ be a bounded domain with a boundary $\partial\omega$ which is $C^{\infty}$.
We assume that the boundary consists ok two disjoin connected components $\Gamma_{\mathrm{int}}$ and $\Gamma_{\mathrm{ext}}$ (with potentially $\Gamma_{\mathrm{ext}} = \emptyset$).
Let $\G$ be the Dirichlet to Neumann operator on $\Gamma_{\mathrm{int}}$ with homogeneous boundary condition on $\Gamma_{\mathrm{ext}}$.
Let $\Delta_{\Gamma_{\mathrm{int}}}$ be the Laplace-Beltrami operator on $\Gamma_{\mathrm{int}}$. 
Then there exists an operator $R$ such that :
\begin{equation}
\G = \sqrt{-\Delta_{\Gamma_{\mathrm{int}}}} + R,
\end{equation}
which is of order $0$, in the sense that $R : H^s(\Gamma_{\mathrm{int}}) \rightarrow H^s(\Gamma_{\mathrm{int}})$ is bounded for all $s\geq 0$.
\end{lem}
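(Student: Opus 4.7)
The strategy is to invoke the pseudodifferential calculus of \cite{TaylorII}: exhibit $\G$ as a classical pseudodifferential operator of order one on $\Gamma_{\mathrm{int}}$ whose principal symbol coincides with that of $\sqrt{-\Delta_{\Gamma_{\mathrm{int}}}}$, so that their difference automatically has order zero.

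First, I would work in a collar neighbourhood of $\Gamma_{\mathrm{int}}$ parametrised by $(x',x_{d+1})$ with $x_{d+1}\in[0,\delta)$ the signed normal distance. There the Laplacian takes the form $\partial_{x_{d+1}}^2 + E(x)\partial_{x_{d+1}} + Q(x_{d+1};x',D_{x'})$, where $Q$ is tangentially elliptic of order two with principal symbol $|\xi'|_{g(x_{d+1})}^2$ coming from the induced metric on the $x_{d+1}$-level set. Following the construction in \cite{TaylorII}, one produces a microlocal factorisation
\[
\Delta \;\equiv\; (\partial_{x_{d+1}}-A^{-})\,(\partial_{x_{d+1}}-A^{+}) \pmod{\text{smoothing}},
\]
with $A^{\pm}$ pseudodifferential of order one in $x'$, depending smoothly on $x_{d+1}$, whose principal symbols at $x_{d+1}=0$ are $\pm|\xi'|_{g}$. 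Matching symbols order by order provides a recursive construction of the lower-order pieces of $A^\pm$.

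Next, I would decompose the harmonic extension $\Phi$ of the datum $\psi$ as $\Phi_0+\Phi_1$. Here $\Phi_0$ is a parametrix of the inward-going equation $(\partial_{x_{d+1}}-A^{+})\Phi_0=0$ with $\Phi_0|_{x_{d+1}=0}=\psi$, cut off to the collar; the corrector $\Phi_1$ absorbs the smoothing remainder of the factorisation and the term needed to enforce the homogeneous Dirichlet condition on $\Gamma_{\mathrm{ext}}$. Since $\Gamma_{\mathrm{ext}}$ is disjoint from $\Gamma_{\mathrm{int}}$, interior elliptic regularity makes $\Phi_1$ of class $C^{\infty}$ in a neighbourhood of $\Gamma_{\mathrm{int}}$, so its normal derivative contributes only a smoothing operator to $\G$. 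Restricting $\partial_{x_{d+1}}\Phi_0$ at $x_{d+1}=0$ yields $-A^{+}(0;x',D_{x'})\psi$ modulo smoothing, that is, $\G = -A^{+}|_{x_{d+1}=0} + S$ with $S$ of order $-\infty$.

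Finally, by Seeley's theorem (also exposed in \cite{TaylorII}), the operator $\sqrt{-\Delta_{\Gamma_{\mathrm{int}}}}$ defined via spectral calculus is a classical first-order pseudodifferential operator on $\Gamma_{\mathrm{int}}$ with principal symbol $|\xi'|_g$. Subtracting, $R:=\G-\sqrt{-\Delta_{\Gamma_{\mathrm{int}}}}$ has vanishing principal symbol, hence is pseudodifferential of order zero, and therefore maps $H^s(\Gamma_{\mathrm{int}})$ into itself continuously for every $s\geq 0$. The main technical obstacle is the microlocal factorisation step: it requires the symbolic calculus with parameter $x_{d+1}$ and a careful control of the smoothing remainders, which is precisely the content of Chapter VII, \S 11--12 of \cite{TaylorII}, so in practice the proof reduces to a clean invocation of that machinery.
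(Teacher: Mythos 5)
Your proposal is correct and takes essentially the same route as the paper, whose proof consists of citing the construction in Chapter VII, \S 11--12 of \cite{TaylorII}: there the Dirichlet-to-Neumann map is exhibited, via exactly the collar factorisation and symbol comparison you describe, as a classical first-order pseudodifferential operator with principal symbol $|\xi'|_g$, so that subtracting $\sqrt{-\Delta_{\Gamma_{\mathrm{int}}}}$ (a first-order classical operator by Seeley's theorem) leaves an operator of order zero. Your sketch is an unpacking of that reference, with the added (and welcome) observation that the homogeneous condition on the disjoint component $\Gamma_{\mathrm{ext}}$ only contributes a smoothing correction near $\Gamma_{\mathrm{int}}$.
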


Now using the fact that on both domains $\Omega^i$ and $\Omega^e$ the active part of the boundary for the Dirichlet to Neumann operator is the same, namely $\Gamma_0$,
and the fact that the Laplace-Beltrami operator of the boundary does not depend on the orientation of the interface, we have the following corollary:

\begin{cor} \label{c:reg-GepGi}
The operator $\mathcal{R} := \G^i_0 - \G^e_0$ is of order $0$, in the sense that $\mathcal{R}_0 : H^s(\Gamma_{0}) \rightarrow H^s(\Gamma_{0})$ is bounded for all $s\geq 0$.
\end{cor}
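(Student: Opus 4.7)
The plan is to reduce the statement directly to Lemma \ref{l:DtN-approx}, applied separately to the two domains $\Omega^i_0$ and $\Omega^e_0$. For the interior domain the boundary is just $\Gamma_0$, so $\Gamma_{\mathrm{ext}} = \emptyset$ in the notation of the lemma, and I obtain
\begin{equation*}
\G^i_0 = \sqrt{-\Delta_{\Gamma_0}} + R_i,
\end{equation*}
with $R_i : H^s(\Gamma_0) \to H^s(\Gamma_0)$ bounded for every $s \geq 0$. For the exterior domain, the boundary splits as the disjoint union $\Gamma_0 \sqcup \Gamma^{\mathrm{ext}}$, and the homogeneous Dirichlet condition on $\Gamma^{\mathrm{ext}}$ fits the lemma's framework with $\Gamma_{\mathrm{int}} = \Gamma_0$; applying it gives $\G^e_0 = \sqrt{-\Delta_{\Gamma_0}} + R_e$ in the same way. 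Subtracting, the principal parts cancel and $\mathcal{R} = R_i - R_e$ inherits the bounded-on-$H^s(\Gamma_0)$ property, as claimed.

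The conceptual content that makes the principal parts cancel is twofold. First, $\Delta_{\Gamma_0}$ is intrinsic to the Riemannian manifold $\Gamma_0$, depending only on its induced metric, so it is insensitive to the side from which one takes the harmonic extension and to the orientation of $\Gamma_0$ as a hypersurface of $\R^{d+1}$. Second, both $\G^i_0$ and $\G^e_0$ are positive self-adjoint operators—being Dirichlet-to-Neumann operators with respect to the respective outward normals $\norz$ on $\Omega^i_0$ and $-\norz$ on $\Omega^e_0$—so the \emph{positive} square root $\sqrt{-\Delta_{\Gamma_0}}$ is the correct leading term in both expansions, even though the two normals point in opposite directions.

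I do not anticipate a genuine obstacle; the corollary is essentially bookkeeping once Lemma \ref{l:DtN-approx} is in hand. The only point requiring a moment of care is to reconcile the paper's convention $\G^e[\rho] := \sqrt{1+|\nabla\rho|^2}\,\G[\Omega^e]$ with that of Lemma \ref{l:DtN-approx}: at $\rho=0$ the renormalization factor is trivial, so $\G^e_0$ coincides with the classical DtN on $\Omega^e_0$ taken with its own outward normal, and the two decompositions above are truly comparable on $\Gamma_0$.
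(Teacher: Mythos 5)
Your proposal is correct and follows essentially the same route as the paper: apply Lemma \ref{l:DtN-approx} to each of $\Omega^i_0$ and $\Omega^e_0$, note that the principal symbol $\sqrt{-\Delta_{\Gamma_0}}$ is intrinsic to $\Gamma_0$ and hence identical for both sides, and subtract. Your additional remarks on the sign of the normal and the triviality of the renormalization factor at $\rho=0$ are accurate and only make explicit what the paper leaves implicit.
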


In particular, when composing with $\F^e_0\circ\div$, which is also an operator of order $0$, we~obtain

\begin{cor} \label{c:reg-Id-A_0-div}
The operator $\mathcal{R}_1:=(\Id-\A_0) \div_{\Gamma_0}$ is of order $0$, in the sense that $\mathcal{R}_1 : H^s(\Gamma_{0}) \rightarrow H^s(\Gamma_{0})$ is bounded for all $s\geq 0$.
\end{cor}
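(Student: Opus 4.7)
The plan is to reduce the claim to Corollary \ref{c:reg-GepGi} via a straightforward algebraic factorization of $\Id - \A_0$. Evaluating the identity $\G^e[-\rho]\,\F^e[\rho] = \Id$ at $\rho = 0$ gives $\G^e_0 \F^e_0 = \Id$, and since $\A_0 = \G^i_0 \F^e_0$ by definition, I would obtain the key factorization
\[
\Id - \A_0 \;=\; (\G^e_0 - \G^i_0)\,\F^e_0 \;=\; -\,\mathcal{R}\,\F^e_0,
\]
so that
\[
\mathcal{R}_1 \;=\; (\Id - \A_0)\,\div_{\Gamma_0} \;=\; -\,\mathcal{R}\circ\F^e_0\circ\div_{\Gamma_0}.
\]

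Next, I would argue that $\F^e_0\circ\div_{\Gamma_0}$ is bounded from $H^s(\Gamma_0, T\Gamma_0)$ to $H^s(\Gamma_0)$ for every $s \geq 0$. This is the statement that $\F^e_0$ is a pseudodifferential operator of order $-1$: it is the inverse of the first-order elliptic operator $\G^e_0$, whose principal symbol is $\sqrt{-\Delta_{\Gamma_0}}$ by Lemma \ref{l:DtN-approx}. Composed with the first-order differential operator $\div_{\Gamma_0}$, the result is of order $0$ and maps $H^s$ tangent fields to $H^s$ scalars.

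Finally, by Corollary \ref{c:reg-GepGi}, $\mathcal{R}$ is bounded $H^s \to H^s$ for every $s \geq 0$. Composing $\mathcal{R}$ with the order-zero operator $\F^e_0\circ\div_{\Gamma_0}$ yields the required bound on $\mathcal{R}_1$.

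The only genuine difficulty is the bookkeeping around the orientation conventions: one must use the sign flip $\G^e[-\rho]\,\F^e[\rho] = \Id$ (which trivializes at $\rho = 0$) and rely on the fact that the Dirichlet condition on $\Gamma^{\mathrm{ext}}$ removes the zero-mean compatibility obstruction familiar from the pure Neumann problem, so that $\F^e_0$ is genuinely defined on all of $H^{s-1}(\Gamma_0)$ without restriction. Beyond these conventions, the argument is essentially a composition of mapping orders.
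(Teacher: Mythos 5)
Your argument is correct and is precisely the one the paper intends: the text preceding the corollary says exactly that one composes $\mathcal{R}$ (of order $0$ by Corollary \ref{c:reg-GepGi}) with $\F^e_0\circ\div_{\Gamma_0}$ (of order $0$ since $\F^e_0$ is of order $-1$), which presupposes your factorization $\Id-\A_0=(\G^e_0-\G^i_0)\F^e_0=-\mathcal{R}\,\F^e_0$ via $\G^e_0\F^e_0=\Id$. Your write-up simply makes explicit the steps the paper leaves implicit.
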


In particular, this corollary means that we can rewrite $\A_1$ as follows:
\begin{equation} \label{e:A1-decomp}
\A_1(\rhop,\psi) 
    = \G^i_0(\rhop\hw)
    + 2\, \div(\rhop \mathbf{V}^e)
    + \tilde{b_0}(\rhop,\psi),
\end{equation}
where $\Vert \tilde{b_0}(\rhop,\psi) \Vert_{H^s} \leq C_s \Vert\rhop\Vert_{H^s}$.

\subsection{Gärding type $L^2$ estimates} \label{ss:L2-est}

In this paragraph, we provide an $L^2$-estimate associated with \eqref{e:prot-linearised}.
This estimate will be used to recover the $H^1$-uniqueness later on.

\begin{prop} \label{p:energy-estimates}
Let $\psi\in  L^{\infty}([0,T], W^{2,\infty}(\Gamma_0))$ be such that the function $\hw := (\Id+\A_0)\psi$ is bounded from below by a  positive constant $\hw\geq \alpha>0$. 
For $\rhop\in H^{1}(\Gamma_0)$,
we have the inequality
\begin{equation}
\langle \A_1(\rhop,\psi), \rhop \rangle
    \geq c\alpha \Vert \rhop\Vert_{H^{1/2}}^2 
     - C\Vert\rhop\Vert_{L^2}^2, 
\end{equation}
for some constant $C>0$ depending on $\Vert \psi\Vert_{W^{2,\infty}}$, $\alpha$ and $\Omega_0$ and some constant $c>0$ depending on $\Omega_0$.
\end{prop}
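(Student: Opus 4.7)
The plan is to exploit the decomposition \eqref{e:A1-decomp} of $\A_1$ and estimate its three contributions separately. Only the first, $\G^i_0(\rhop\,\hw)$, can produce the coercive $H^{1/2}$-gain, since the other two will turn out to cost only $\|\rhop\|_{L^2}^2$.

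I would first dispatch the easier terms. For the divergence term $\langle 2\div_{\Gamma_0}(\rhop\,\Ve), \rhop\rangle$, integrating by parts on the closed manifold $\Gamma_0$ via \eqref{e:divg-prod} and Stokes turns it into $\int_{\Gamma_0} (\div_{\Gamma_0}\Ve)\,\rhop^2$, which is bounded by $\|\div_{\Gamma_0}\Ve\|_{L^\infty}\|\rhop\|_{L^2}^2$. The $L^\infty$-bound on $\div_{\Gamma_0}\Ve = \Delta_{\Gamma_0}\F^e_0\psi$ follows from $\psi \in W^{2,\infty}$ together with elliptic regularity for $\F^e_0$. For $\tilde{b}_0$, the stated control $\|\tilde{b}_0(\rhop,\psi)\|_{L^2} \leq C\|\rhop\|_{L^2}$ immediately gives $|\langle \tilde{b}_0, \rhop\rangle| \leq C'\|\rhop\|_{L^2}^2$.

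The heart of the proof is the term $\langle \G^i_0(\hw\,\rhop), \rhop\rangle$. Using Lemma \ref{l:DtN-approx}, I write $\G^i_0 = A + R$ with $A := \sqrt{-\Delta_{\Gamma_0}}$ self-adjoint nonnegative on $L^2(\Gamma_0)$ and $R$ bounded on $L^2$; the $R$-term costs only $O(\|\rhop\|_{L^2}^2)$. For the principal part, I use the self-adjointness of $A$ to write $\langle A(\hw\,\rhop), \rhop\rangle = \langle A^{1/2}(\hw\,\rhop), A^{1/2}\rhop\rangle$, then commute multiplication past $A^{1/2}$: $A^{1/2}(\hw\,\rhop) = \hw\, A^{1/2}\rhop + [A^{1/2},\hw]\rhop$. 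The positivity $\hw \geq \alpha$ gives $\int_{\Gamma_0}\hw\, |A^{1/2}\rhop|^2 \geq \alpha\|A^{1/2}\rhop\|_{L^2}^2$, while the commutator $[A^{1/2},\hw]$ is bounded on $L^2$ with norm controlled by $\|\psi\|_{W^{2,\infty}}$ by a standard Calderón-type commutator estimate. Cauchy-Schwarz and Young's inequality then absorb this into $\tfrac{\alpha}{2}\|A^{1/2}\rhop\|_{L^2}^2 + C_{\alpha,\psi}\|\rhop\|_{L^2}^2$. The spectral equivalence $\|A^{1/2}\rhop\|_{L^2}^2 \geq c\|\rhop\|_{H^{1/2}}^2 - C\|\rhop\|_{L^2}^2$ on the compact manifold $\Gamma_0$ (with $c, C$ depending only on $\Omega_0$) then yields the claimed inequality.

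The main technical obstacle I anticipate is ensuring $\hw$ has sufficient regularity for the commutator estimate to go through at this threshold. Here $\psi \in W^{2,\infty}(\Gamma_0)$ combined with Corollary \ref{c:reg-GepGi} (and the identity $\G^e_0\F^e_0 = \Id$, which shows that $\A_0 - \Id = (\G^i_0 - \G^e_0)\F^e_0$ is smoothing) implies that $\hw = (\Id + \A_0)\psi$ inherits the full $W^{2,\infty}$-regularity of $\psi$, which is more than enough for the Calderón estimate on $[A^{1/2},\hw]$.
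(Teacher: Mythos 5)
Your proposal is correct, and for two of the three terms in the decomposition \eqref{e:A1-decomp} — the transport term $2\div_{\Gamma_0}(\rhop\,\Ve)$, handled by anti-symmetry, and the order-zero remainder $\tilde{b_0}$ — it coincides with the paper's argument. For the coercive term $\langle \G^i_0(\rhop\hw),\rhop\rangle$, however, you take a genuinely different route. The paper works in the bulk: using the symmetry of $\G^i_0$ and Green's formula it writes $\langle \rhop\hw,\G^i_0(\rhop)\rangle=\int_{\Omega_0}\nabla\bigl(e(\hw)\phi_0(\rhop)\bigr)\cdot\nabla\phi_0(\rhop)$ for an extension $e(\hw)\geq\alpha$ of $\hw$ to $\Omega_0$, gets the sign from $\int_{\Omega_0}e(\hw)|\nabla\phi_0(\rhop)|^2\geq\alpha\Vert\nabla\phi_0(\rhop)\Vert_{L^2}^2$ together with the trace theorem, and absorbs the single first-order cross term by Young's inequality. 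You instead stay on the boundary: Lemma \ref{l:DtN-approx} replaces $\G^i_0$ by $\sqrt{-\Delta_{\Gamma_0}}$ up to an $L^2$-bounded remainder, and coercivity follows from self-adjointness, the Calder\'on commutator bound for $[(-\Delta_{\Gamma_0})^{1/4},\hw]$, and the spectral characterization of $H^{1/2}(\Gamma_0)$. Both arguments are valid and yield the same dependence of the constants. The paper's variational route has two advantages: it needs neither the pseudodifferential approximation of $\G^i_0$ (which the paper only invokes to tame $\Id-\A_0$) nor a fractional-order commutator theorem, and the objects it introduces — the extension $e(\hw)$ and the bulk integration by parts — are exactly what is pushed further in the $H^1$ estimate of Proposition \ref{p:H1-estimates}, where a purely boundary commutator argument would be more delicate. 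Your route buys brevity and keeps everything on $\Gamma_0$; its only fragile point is the regularity of $\hw$: Corollary \ref{c:reg-GepGi} gives gains on the Sobolev scale, not in $W^{2,\infty}$, so the claim that $\hw$ inherits the full $W^{2,\infty}$ regularity of $\psi$ is not quite what you get. What you do get is $\hw-2\psi=(\A_0-\Id)\psi\in H^3(\Gamma_0)$, hence $\hw$ Lipschitz in the relevant dimensions, and Lipschitz regularity is all the Calder\'on estimate requires — so the argument closes, but you should state it that way.
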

\begin{proof}[Proof of Proposition \ref{p:energy-estimates}]
We decompose $\A_1$ following \eqref{e:A1-decomp}, we have three terms to control here:
\begin{equation} \label{e:L2-est-test-funct}
\langle \A_1(\rhop,\psi),  \rhop \rangle
    = \langle \G^i_0(\rhop\hw),\rhop \rangle
    + 2\langle \div(\rhop \Ve), \rhop \rangle
    + \langle \tilde{b_0}(\rhop,\psi), \rhop \rangle.
\end{equation}

\begin{itemize}
\item 
For the first term of \eqref{e:L2-est-test-funct}, we start by using the symmetry of the Dirichlet-to-Neumann operator:
\begin{equation}
\langle \G^i_0(\rhop\hw),\rhop \rangle
    = \langle \rhop\hw,\G^i_0(\rhop) \rangle.
\end{equation}
We fix an extension $e(\hw)$ of $\hw$ to $W^{2,\infty}(\Omega_0)$ that also respect $e(\hw)\geq \alpha$.
To construct $e(\hw)$, we introduce a $C^{\infty}$ cutoff function $\delta$, with $0\leq \delta \leq 1$,  $\delta_{|\Gamma_0} = 1$, and $\delta_{|\Omega_0\setminus\tub} = 0$ and set:
\begin{equation}
e(\hw)(X+x\norz(X)) 
    = \delta(X+x\norz(X)) (\hw(X) - \alpha) 
     + \alpha.
\end{equation}
Then, one can see that $e(\hw)\phi_0(\rhop)$ is an extension of $\hw\rhop$ and therefore, by stokes theorem, we have:
\begin{align} \notag
\langle \rhop\hw,\G^i_0(\rhop) \rangle
    &= \int_{\Omega_0}{\nabla(e(\hw)\phi_0(\rhop)) \cdot \nabla\phi_0(\rhop)} \\
    &= \int_{\Omega_0}{\phi_0(\rhop) \nabla(e(\hw)) \cdot \nabla\phi_0(\rhop)} 
     + \int_{\Omega_0}{e(\hw) |\nabla\phi_0(\rhop)|^2}.
\end{align}
Then, we bound the first term using $2ab \leq \varepsilon a^2 + \tfrac{1}{\varepsilon} b^2$ by
\begin{equation*}
\left|\int_{\Omega_0}{\phi_0(\rhop) \nabla(e(\hw)) \cdot \nabla\phi_0(\rhop)} \right|
    \leq \tfrac{2}{\alpha}\Vert e(\hw)\Vert_{W^{1,\infty}} \Vert \phi_0(\rhop)\Vert_{L^2}^2 
     + \tfrac{\alpha}{2} \Vert \nabla\phi_0(\rhop)\Vert_{L^2}^2.
\end{equation*}
And we bound the second by below using the hypothesis on $e(\hw)$:
\begin{equation}
\int_{\Omega_0}{e(\hw) |\nabla\phi_0(\rhop)|^2} 
    \geq \alpha \Vert \nabla\phi_0(\rhop)\Vert_{L^2}^2.
\end{equation}
We recall that the operator $\phi_0 : L^2(\Gamma_0) \rightarrow H^{1/2}(\Omega_0)$ is continuous (see \cite{TaylorI}, Chapter $V$, Proposition $1.8$ for example) and in particular 
\begin{equation}
\Vert \phi_0(\rhop)\Vert_{L^2} \leq \Vert \rhop\Vert_{L^2}.
\end{equation}
We also recall that due to trace theorem, the $H^1$ norm of $\phi_0(\rhop)$ control the $H^{1/2}$ norm of its trace
\begin{equation}
\Vert \nabla\phi_0(\rhop)\Vert_{L^2}^2
    = \Vert \phi_0(\rhop)\Vert_{H^1}^2 
     - \Vert \phi_0(\rhop)\Vert_{L^2}^2
    \geq c\Vert \rhop\Vert_{H^{1/2}}^2
     - \Vert\rhop \Vert_{L^2}^2
\end{equation}
Therefore :
\begin{equation} \label{e:L2-est-coerc-term}
\langle \rhop\hw,\G^i_0(\rhop) \rangle
    \geq c\alpha \Vert\rhop\Vert_{H^{1/2}}^2
     - C\Vert \rhop\Vert_{L^2}^2
\end{equation}

\item 
For the second term of \eqref{e:L2-est-test-funct} we use the anti-symmetry to bound it:
\begin{align*}
\int_{\Gamma_0}{\rhop\, \div(\rhop\Ve)}
   &= - \int_{\Gamma_0}{\rhop\Ve \cdot \nabla \rhop} \\
   &= - \int_{\Gamma_0}{\rhop \left(\div(\rhop\Ve)-\rhop\, \div(\Ve)\right)},
\end{align*}
and therefore :
\begin{equation}
\int_{\Gamma_0}{\rhop\, \div(\rhop\Ve)}
    = \tfrac{1}{2} \int_{\Gamma_0}{\rhop^2\div(\Ve)}.
\end{equation}
To conclude, we have 
\begin{equation} \label{e:L2-est-anti-sym}
\left|\int_{\Gamma_0}{\rhop\, \div(\rhop\Ve)}\right|
    \leq \Vert \Ve\Vert_{W^{1,\infty}} \Vert\rhop\Vert_{L^2}^2.
\end{equation}
\item 
The last term is of order $0$:
\begin{equation} \label{e:L2-est-last-term}
\left|  \langle \tilde{b_0}(\rhop,\psi), \rhop \rangle \right|
    \leq C \Vert\rhop\Vert_{L^2}^2.
\end{equation}
\end{itemize}
By using \eqref{e:L2-est-coerc-term}, \eqref{e:L2-est-anti-sym}, \eqref{e:L2-est-last-term} into \eqref{e:L2-est-test-funct}, we get 
\begin{equation}
\langle \A_1(\rhop,\psi), \rhop \rangle
    \geq c\alpha \Vert \rhop\Vert_{H^{1/2}}^2 
     - C\Vert\rhop\Vert_{L^2}^2, 
\end{equation}
as wanted.
\end{proof}

\subsection{Gärding type $\dot{H}^1$ estimates} \label{ss:H1-est}

In this paragraph, we provide an $H^1$-estimate associated with \eqref{e:prot-linearised}.
This estimate will be used to prove the existence of $H^1$-solution to \eqref{e:prot-linearised}.
The standard approach to obtain $H^1$ estimates is to start by integrating against the laplacian and try to rearrange the derivative nicely.
As seen in the case of the $L^2$-estimate, an easy way to control the term involving $G^i$ is to go from $\Gamma_0$ to $\Omega_0$ using Stokes formula and make the manipulation in the full domain instead of the boundary.
We wish to do the same here.
To that end, we construct an extension of the boundary laplacian in the tubular neighbourhood $\tubp$.

Without loss of generality (we can restrict ourself to an even smaller neighbourhood), we assume that $\tubp$ is of the form:
\begin{equation}
\tubp = \{ X+x\norz(X); X\in\Gamma_0, x\in[-\varepsilon_1,\varepsilon_1]\}.
\end{equation}
We fix $\delta_1 : [-\varepsilon_1,0] \rightarrow [0,1]$ a $C^{\infty}$ cutoff function that is equal to zero in a neighborhood of $-\varepsilon_1$ and $1$ in a neighbourhood of $0$.
We denote by $\Sigma$ the diffeomorphism that links $\tubp$ with its normal representation:
\begin{equation}
\Sigma : (X,x) \in \Gamma_0\times [-\varepsilon_1,\varepsilon_1] \mapsto X+x\norz(X) \in \tubp,
\end{equation}
and by $J_{X,x}$ its jacobian determinant:
\begin{equation}
J_{X,x} := \det(\Id+xL_X).
\end{equation}
Armed with this notation, we define the extensions of the tangential gradient and divergence operator as follows:

\begin{defn}
Let $f \in C^1(\tub)$ and $v\in C^1(\tub,\R^{d+1})$. 
We define $\nabla_{\Gamma_0}^{\dagger}f$ and $\div_{\Gamma_0}^{\dagger}$ as 
\begin{equation}
\nabla_{\Gamma_0}^{\dagger}f(X+x\norz(X)) 
    := \nabla_{\Gamma_0} f_x(X),
\end{equation}
and 
\begin{equation}
\div_{\Gamma_0}^{\dagger}(v)(X+x\norz(X))
   := J_{X,x} \div_{\Gamma_0}(J_{\cdot,x}^{-1} v_x) (X),
\end{equation}
where 
\begin{equation*}
f_x(X) := f(X+ x\norz(X))
\quad \text{ and } \quad
v_x(X) := \Pi_0(v(X+x\norz(X))),
\end{equation*}
where $\Pi_0$ is the projection on $T_X\Gamma_0$ defined in \eqref{art4-e:Pi0_def}.

We denote by $\Delta_{\Gamma_0}^{\dagger} := \div_{\Gamma_0}^{\dagger} \circ \nabla_{\Gamma_0}^{\dagger}$ the extension of the boundary laplacian corresponding to the one we took for the gradient and divergence operators.
\end{defn}

\begin{rem}
For $f\in C^1(\Omega_0,\R)$, and $\chi \in C^{\infty}(\Omega_0,\R)$ equal to $0$ outside of $\tub$, we can see $\chi f$ as a function over $\tub$ and therefore define its extended gradient $\nabla_{\Gamma_0}^{\dagger}(\chi f)$.
Similarly, we can define the divergence of $\chi \mathbf{v}$ for $\mathbf{v}$ any vector field over $\Omega_0$. 
\end{rem}

One of the usefull properties of the extension we took for the gradient and divergence is that they are adjoints one of the other.

\begin{lem} \label{l:ipp-dagger}
Let $f \in H^1(\tub)$ and $v\in H^1(\tub,\R^{d+1})$. 
One has 
\begin{equation}
\int_{\Omega_0}{f \, \div_{\Gamma_0}^{\dagger}(v) }
    = - \int_{\Omega_0}{\nabla_{\Gamma_0}^{\dagger}f \cdot v}.
\end{equation}
\end{lem}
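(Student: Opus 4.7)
The plan is to transport both sides to iterated integrals over $\Gamma_0 \times [-\varepsilon_1, 0]$ via the parametrisation $\Sigma:(X,x)\mapsto X+x\norz(X)$, and then to apply Stokes' theorem on the closed manifold $\Gamma_0$ for each fixed $x$. A standard cut-off/density argument first reduces the statement to smooth $v$ supported away from the lateral boundary of $\tubp$, so that no boundary terms arise when integrating by parts in the $X$ variable; the integrals over $\Omega_0$ then reduce to integrals over $\tubp\cap\Omega_0$, since both $\nabla_{\Gamma_0}^\dagger f$ and $\div_{\Gamma_0}^\dagger v$ vanish outside $\tubp$.

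Under $\Sigma$, the Euclidean volume form pulls back as $\mathrm{d}V = J_{X,x}\,\mathrm{d}S(X)\,\mathrm{d}x$. The right-hand side becomes
\begin{equation*}
-\int_{-\varepsilon_1}^{0}\!\!\int_{\Gamma_0} \nabla_{\Gamma_0}f_x(X)\cdot v_x(X)\, J_{X,x}\,\mathrm{d}S(X)\,\mathrm{d}x,
\end{equation*}
using that only the tangential part $v_x$ of $v$ pairs nontrivially with the tangential gradient $\nabla_{\Gamma_0}f_x$ (the normal component drops out). The left-hand side, after absorbing the Jacobian from the volume form, involves the integrand $f_x(X)\, J_{X,x}\,\div_{\Gamma_0}\bigl(J_{\cdot,x}^{-1}v_x\bigr)(X)\, J_{X,x}$; the factors of $J_{X,x}$ built into the definition of $\div_{\Gamma_0}^\dagger$ are designed precisely so that, together with the Jacobian from the volume form, they pair with the inner $J_{\cdot,x}^{-1}$ in such a way that one application of Stokes' theorem on the closed manifold $\Gamma_0$ cancels the product-rule corrections and leaves exactly $\nabla_{\Gamma_0}f_x\cdot v_x\cdot J_{X,x}$.

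The proof is therefore essentially a formal adjoint computation: the operators $\nabla_{\Gamma_0}^\dagger$ and $\div_{\Gamma_0}^\dagger$ have been engineered to be formal adjoints (up to a sign) with respect to the Euclidean inner product on $\tub$, and the lemma amounts to verifying this duality slice by slice. The main—and essentially only—obstacle is the careful bookkeeping of the weights $J_{X,x}$ through the product rule on each level set $\{x=\text{const}\}$; once that telescoping is carried out, Fubini reassembles the two iterated integrals into the claimed identity.
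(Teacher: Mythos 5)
Your proof follows exactly the paper's argument: restrict the integrals to $\tubp\cap\Omega_0$, change variables by $\Sigma$ so that the volume form becomes $J_{X,x}\,\mathrm{d}X\,\mathrm{d}x$, use Fubini, and apply Stokes' theorem on the closed manifold $\Gamma_0$ slice by slice in $x$ (your preliminary cut-off near the lateral boundary is unnecessary, since no integration by parts in $x$ occurs and $\Gamma_0$ has no boundary). One caveat: with the definition taken literally, $\div_{\Gamma_0}^{\dagger}(v)=J_{X,x}\,\div_{\Gamma_0}(J_{\cdot,x}^{-1}v_x)$, the weights do \emph{not} telescope --- Stokes leaves an extra term $-\int f_x\,\nabla_{\Gamma_0}(J_{X,x}^{2})\cdot J_{X,x}^{-1}v_x$ --- and the identity actually requires the convention $J_{X,x}^{-1}\div_{\Gamma_0}(J_{\cdot,x}v_x)$, which is the one the paper's own computation silently uses; so the step you assert as ``designed precisely so that'' should be carried out explicitly against the intended convention rather than taken on faith.
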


\begin{proof}
We use the change of variable $\Sigma$, as well as Stokes formula on $\Gamma_0$:
\begin{align*}
\int_{\Omega_0}{f \, \div_{\Gamma_0}^{\dagger}(v) }
    &= \int_{\tubp\cap \Omega_0}{f \, \div_{\Gamma_0}^{\dagger}(v) } \\
    &= \int_{-\varepsilon_1}^{0}{\int_{\Gamma_0}{\left(f \, \div_{\Gamma_0}^{\dagger}(v)\right)(X+x\norz(X)) J_{X,x} \, \mathrm{d}X} \, \mathrm{d}x} \\
    &= \int_{-\varepsilon_1}^{0}{\int_{\Gamma_0}{f_x(X) \, \div_{\Gamma_0}( J_{\cdot,x}\Pi_0 v_x)(X) \, \mathrm{d}X} \, \mathrm{d}x} \\
    &= \int_{-\varepsilon_1}^{0}{\int_{\Gamma_0}{f_x(X) \, \div_{\Gamma_0}( J_{\cdot,x}\Pi_0 v_x)(X) \, \mathrm{d}X} \, \mathrm{d}x} \\
    &= -\int_{-\varepsilon_1}^{0}{\int_{\Gamma_0}{\nabla_{\Gamma_0}f_x (X) \cdot \Pi_0 v_x(X)  J_{X,x} \,\mathrm{d}X} \, \mathrm{d}x} \\
    &= -\int_{-\varepsilon_1}^{0}{\int_{\Gamma_0}{\left(\nabla_{\Gamma_0}^{\dagger}f \cdot v \right)(X+x\norz(X))  J_{X,x} \, \mathrm{d}X} \, \mathrm{d}x} \\
    &= -\int_{\tubp\cap \Omega_0}{\nabla_{\Gamma_0}^{\dagger}f \cdot v } \\
    &= -\int_{\Omega_0}{\nabla_{\Gamma_0}^{\dagger}f \cdot v }.
\end{align*}
\end{proof}

One other useful property of the tangential gradient is that $\nabla_{\Gamma_0}^{\dagger}\nabla \phi_0(\rhop)$ bounds $\rhop$ nicely.

\begin{lem}\label{l:coerc-nab-dagger-nab}
Let $\chi\in C^{\infty}(\Omega_0)$ be a cutoff function equal to $0$ outside $\tub$ and equal to $1$ on $\tubp$.
Let $\rhop \in H^{3/2}(\Gamma_0)$.
There exists a constant $c>0$, independent of $\rhop$ such that
\begin{equation}
\Vert\nabla_{\Gamma_0}^{\dagger}\nabla \left(\chi\phi_0(\rhop)\right) \Vert_{L^2(\Omega_0)}^2
    + \Vert\rhop\Vert_{L^2(\Gamma_0)}^2 
    \geq c\Vert \rhop\Vert_{H^{3/2}(\Gamma_0)}^2.
\end{equation}
\end{lem}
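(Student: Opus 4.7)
Set $f := \phi_0(\rhop)$, which is harmonic on $\Omega_0$ with trace $\rhop$ on $\Gamma_0$. Since the harmonic extension operator $\phi_0$ is an isomorphism from $H^{3/2}(\Gamma_0)$ onto the space of $H^2(\Omega_0)$-harmonic functions (with the trace as continuous inverse), one has $\|\rhop\|_{H^{3/2}(\Gamma_0)}^2 \lesssim \|f\|_{H^2(\Omega_0)}^2$. It therefore suffices to bound $\|f\|_{H^2(\Omega_0)}^2$ by the right-hand side of the claimed inequality. Since $f$ is harmonic, interior elliptic regularity gives $\|f\|_{H^2(\Omega_0 \setminus \tubp)}^2 \lesssim \|f\|_{L^2(\Omega_0)}^2 \lesssim \|\rhop\|_{L^2(\Gamma_0)}^2$ (the last step using continuity of $\phi_0 : L^2(\Gamma_0) \to H^{1/2}(\Omega_0)$). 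The real work is therefore to control $\|f\|_{H^2(\tubp)}^2$.

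On $\tubp$ we have $\chi \equiv 1$, hence $\nabla(\chi f) = \nabla f$ and $\nabla_{\Gamma_0}^{\dagger}\nabla(\chi f) = \nabla_{\Gamma_0}^{\dagger}\nabla f$ there. The operator $\nabla_{\Gamma_0}^{\dagger}$ takes tangential derivatives of the components of $\nabla f$, so $\|\nabla_{\Gamma_0}^{\dagger}\nabla f\|_{L^2(\tubp)}^2$ controls, in $L^2(\tubp)$, all tangent-tangent and tangent-normal second derivatives of $f$ (modulo lower-order Weingarten terms produced when comparing $\nabla_{\Gamma_0}^{\dagger}$ to the ambient tangential projection $\nabla_{\Gamma_0}$). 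For the remaining normal-normal derivative $\partial_{\norz}^2 f$, I would invoke the decomposition
\begin{equation*}
\Delta f = \div_{\Gamma_0}(\nabla_{\Gamma_0} f) + (H_0 + \partial_{\norz})\partial_{\norz} f
\end{equation*}
from the remark following Lemma \ref{l:div-decomp-tang-norm}, which is valid throughout $\tub$. Harmonicity then yields
\begin{equation*}
\partial_{\norz}^2 f = - \div_{\Gamma_0}(\nabla_{\Gamma_0}f) - H_0\,\partial_{\norz}f \quad \text{in } \tubp,
\end{equation*}
whose right-hand side is bounded in $L^2(\tubp)$ by $\|\nabla_{\Gamma_0}^{\dagger}\nabla f\|_{L^2(\tubp)} + \|\nabla f\|_{L^2(\tubp)}$. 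Combining these estimates gives
\begin{equation*}
\|f\|_{H^2(\tubp)}^2 \lesssim \|\nabla_{\Gamma_0}^{\dagger}\nabla(\chi f)\|_{L^2(\Omega_0)}^2 + \|f\|_{H^1(\Omega_0)}^2.
\end{equation*}

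It remains to absorb the first-order term. By continuity of the harmonic extension, $\|f\|_{H^1(\Omega_0)}^2 \lesssim \|\rhop\|_{H^{1/2}(\Gamma_0)}^2$, and by Sobolev interpolation between $L^2$ and $H^{3/2}$,
\begin{equation*}
\|\rhop\|_{H^{1/2}}^2 \leq C\|\rhop\|_{L^2}^{4/3}\|\rhop\|_{H^{3/2}}^{2/3} \leq \epsilon\|\rhop\|_{H^{3/2}}^2 + C_{\epsilon}\|\rhop\|_{L^2}^2
\end{equation*}
for any $\epsilon>0$ (via Young with exponents $3$ and $3/2$). Taking $\epsilon$ small enough allows absorption of the $H^{3/2}$ contribution into the left-hand side, concluding the proof. \textbf{Main obstacle.} The delicate point is Step where we pass from harmonicity to $\partial_{\norz}^2 f$: one must track that $\div_{\Gamma_0}(\nabla_{\Gamma_0}f)$ and $\|\nabla_{\Gamma_0}^{\dagger}\nabla f\|_{L^2(\tubp)}$ measure the same tangential second derivatives up to lower-order curvature remainders (encoded in the Jacobian $J_{X,x}$ and the Weingarten map $\weing$). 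These remainders are harmless, but verifying this equivalence carefully is the technical heart of the argument.
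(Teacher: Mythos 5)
Your proposal is correct and follows essentially the same route as the paper: interior elliptic regularity away from the boundary, harmonicity combined with the tangential/normal decomposition of the Laplacian to recover $\partial_{\norz}^2\phi_0(\rhop)$ from the tangential second derivatives, and the trace theorem to pass back to $\Vert\rhop\Vert_{H^{3/2}}$. The only difference is that you make explicit the absorption of the intermediate $H^{1/2}$-norm via interpolation and Young's inequality, a step the paper's proof leaves implicit.
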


\begin{proof}
The proof uses three ideas.
The first one is that, since $\phi_0(\rhop)$ is harmonic, it is smooth on any compact included in $\Omega_0$ (see \cite{Mclean}, Theorem 4.16 for example).
In particular, there exists a constant $C$, independent of $\rhop$, such that:
\begin{equation}
\Vert \chi\phi_0(\rhop)\Vert_{H^2(\tub\setminus \tubp)}
    \leq C\Vert\rhop\Vert_{L^2(\Gamma_0)}.
\end{equation}
The second idea is that since $\phi_0(\rhop)$ is harmonic, the normal derivative plus the tangential ones gives $0$.
More precisely, on $\tubp$, $\chi=1$, and one has:
\begin{equation}
\div_{\Gamma_0}(\nabla_{\Gamma_0} \phi_0(\rhop))
     + (H_0+\partial_{\norz}) \partial_{\norz} \phi_0(\rhop)
    = 0,
\end{equation}
which gives
\begin{equation}
\partial_{\norz}^2 \phi_0
    = - H_0 \partial_{\norz}\phi_0(\rhop) 
      - \Delta_{\Gamma_0}^{\dagger} \phi_0(\rhop)
      + J_{X,x} \nabla_{\Gamma_0} J_{\cdot,x} \cdot \nabla_{\Gamma_0}^{\dagger} \phi_0(\rhop).
\end{equation}
In particular, since 
$$\Vert \nabla^2\phi_0(\rhop) \Vert_{L^2(\tubp)} 
    \leq \Vert \nabla_{\Gamma_0}^{\dagger}\nabla\phi_0(\rhop)\Vert_{L^2(\tubp)} 
     + \Vert \partial_{\norz}^2\phi_0(\rhop)\Vert_{L^2(\tubp)},$$
we get that for $C \geq \Vert H_0\Vert_{L^{\infty}}$:
\begin{equation}
\Vert \nabla^2\phi_0(\rhop) \Vert_{L^2(\tubp)} 
    \leq 2\Vert \nabla_{\Gamma_0}^{\perp}\nabla\phi_0(\rhop)\Vert_{L^2(\tubp)} 
     + C\Vert \phi_0(\rhop)\Vert_{H^1(\tubp)}.
\end{equation}
And the third idea is to conclude using the trace theorem:
\begin{equation}
\Vert\rhop\Vert_{H^{3/2}} 
    \leq C\Vert\phi_0(\rhop)\Vert_{H^2}.
\end{equation}
\end{proof}

Armed with this two Lemmata, we can now work on the $H^1$ estimates.

\begin{prop} \label{p:H1-estimates}
Let $\psi\in  W^{3,\infty}([0,T]\times \Gamma_0))$ be such that the function $\hw := (\Id+\A_0)\psi$ is bounded by below by a  positive constant $\hw\geq \alpha>0$. 
Let $\rhop\in C^{\infty}(\Gamma_0)$.
Then we have the inequality
\begin{equation}
\langle \A_1(\rhop,\psi), -\Delta_{\Gamma_0} \rhop \rangle
    \geq c\alpha \Vert \rhop\Vert_{\dot{H}^{3/2}}^2 - C\Vert\rhop\Vert_{H^1}^2, 
\end{equation}
for some constant $C>0$ depending on $\Vert \psi\Vert_{W^{3,\infty}}$ and $\Omega_0$ and some constant $c>0$ depending on $\Omega_0$.
\end{prop}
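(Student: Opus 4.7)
The plan is to mirror the $L^2$ estimate of Proposition~\ref{p:energy-estimates}, pushed up one derivative via the extended tangential operators of Section~\ref{ss:H1-est}. Starting from the decomposition \eqref{e:A1-decomp}, I would pair each summand of $\A_1(\rhop,\psi)$ with $-\Delta_{\Gamma_0}\rhop$ and treat the three resulting terms separately: the parabolic term $\langle \G^i_0(\rhop\hw),-\Delta_{\Gamma_0}\rhop\rangle$ that should provide the coercivity; the divergence term $2\langle \div(\rhop\Ve),-\Delta_{\Gamma_0}\rhop\rangle$ that should vanish modulo $\Vert\rhop\Vert_{H^1}^2$ by skew-symmetry; and the order-zero remainder involving $\tilde b_0$.

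The main contribution comes from $\langle \G^i_0(\rhop\hw),-\Delta_{\Gamma_0}\rhop\rangle$. Following the $L^2$ proof, I would extend $\hw$ to some $e(\hw)\in W^{2,\infty}(\Omega_0)$ with $e(\hw)\geq\alpha$, fix the cutoff $\chi$ of Lemma~\ref{l:coerc-nab-dagger-nab}, first integrate by parts on $\Gamma_0$ to turn $-\Delta_{\Gamma_0}\rhop$ into $\nabla_{\Gamma_0}\rhop$ paired with $\nabla_{\Gamma_0}\G^i_0(\rhop\hw)$, and then lift the boundary integral to $\Omega_0$ via Lemma~\ref{l:ipp-dagger} and the extended operator $\nabla_{\Gamma_0}^{\dagger}$. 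As in the proof of Lemma~\ref{l:coerc-nab-dagger-nab}, the harmonicity of $\phi_0(\rhop)$ lets me trade $\partial_{\norz}^2\phi_0(\rhop)$ for tangential derivatives. The dominant term should then read
\[
\int_{\Omega_0} e(\hw)\,\bigl|\nabla_{\Gamma_0}^{\dagger}\nabla(\chi\phi_0(\rhop))\bigr|^2
    \;\geq\; \alpha\,\Vert\nabla_{\Gamma_0}^{\dagger}\nabla(\chi\phi_0(\rhop))\Vert_{L^2(\Omega_0)}^2,
\]
which by Lemma~\ref{l:coerc-nab-dagger-nab} is bounded below by $c\alpha\Vert\rhop\Vert_{H^{3/2}}^2-C\Vert\rhop\Vert_{L^2}^2$. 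All cross terms produced by distributing derivatives onto $e(\hw)$, onto $\chi$, or through the curvature of $\Gamma_0$ carry at most one derivative of $\rhop$ and should absorb into $C\Vert\rhop\Vert_{H^1}^2$, using $\psi\in W^{3,\infty}$ to control two derivatives of $\hw$.

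For the divergence term, I would write $-\Delta_{\Gamma_0}=-\div_{\Gamma_0}\nabla_{\Gamma_0}$ and integrate by parts once to obtain $2\int_{\Gamma_0}\nabla_{\Gamma_0}\div(\rhop\Ve)\cdot\nabla_{\Gamma_0}\rhop$. The commutator of $\nabla_{\Gamma_0}$ and $\div$ on the curved surface produces only curvature contributions of order at most one in $\rhop$, bounded by $C\Vert\rhop\Vert_{H^1}^2$. The leading piece $\int_{\Gamma_0}\div(\nabla_{\Gamma_0}\rhop\otimes\Ve)\cdot\nabla_{\Gamma_0}\rhop$, after one further integration by parts and symmetrization in $\nabla_{\Gamma_0}\rhop$, reduces to $\tfrac{1}{2}\int_{\Gamma_0}|\nabla_{\Gamma_0}\rhop|^2\div(\Ve)$, again of size $\Vert\Ve\Vert_{W^{1,\infty}}\Vert\rhop\Vert_{H^1}^2$. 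For the order-zero remainder $\tilde{b}_0$, a single integration by parts on $\Gamma_0$ combined with the $H^1$-boundedness of $\tilde{b}_0$ (which rests on Corollary~\ref{c:reg-Id-A_0-div} and the hypothesis $\psi\in W^{3,\infty}$) gives $\langle\tilde{b}_0,-\Delta_{\Gamma_0}\rhop\rangle\leq C\Vert\rhop\Vert_{H^1}^2$.

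The main obstacle will be the bookkeeping in the parabolic step: reaching the coercive form above requires combining the $\Gamma_0$-integration by parts, the lift of Lemma~\ref{l:ipp-dagger}, the harmonicity of $\phi_0(\rhop)$, and the trace identity $(\chi\phi_0(\rhop))_{|\Gamma_0}=\rhop$ in the right order so that every remainder genuinely lives at the $\Vert\rhop\Vert_{H^1}^2$ level rather than at $\Vert\rhop\Vert_{H^{3/2}}^2$. This tightness is precisely why the hypothesis on $\psi$ is strengthened from $W^{2,\infty}$ in Proposition~\ref{p:energy-estimates} to $W^{3,\infty}$ here: in the worst remainders two derivatives must land on $\hw$ before they land on $\rhop$.
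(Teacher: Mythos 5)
Your proposal is correct and follows essentially the same route as the paper: the decomposition \eqref{e:A1-decomp}, the lift of the coercive term to $\Omega_0$ via the extended tangential operators and Lemma \ref{l:ipp-dagger}, the lower bound through $e(\hw)\geq\alpha$ and Lemma \ref{l:coerc-nab-dagger-nab}, and the skew-symmetry plus order-zero arguments for the remaining two terms. The only point you gloss over, which the paper treats explicitly, is that $\phi_0(\rhop\hw)\neq e(\hw)\phi_0(\rhop)$, so the difference $\tilde{\phi}:=\phi_0(\rhop\hw)-e(\hw)\phi_0(\rhop)$ must be controlled separately by elliptic regularity; this is covered by your blanket claim about cross terms.
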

\begin{proof}[Proof of Proposition \ref{p:H1-estimates}]
Similarly to the case of $L^2$-estimate, we have three terms to control here:
\begin{equation} \label{e:H1-est-test-funct}
\langle \A_1(\rhop,\psi), -\Delta_{\Gamma_0} \rhop \rangle
    = \langle \G^i_0(\rhop\hw), -\Delta_{\Gamma_0}\rhop \rangle
    + 2\langle \div(\rhop \Ve), -\Delta_{\Gamma_0}\rhop \rangle
    + \langle \tilde{b_0}(\rhop,\psi), -\Delta_{\Gamma_0}\rhop \rangle.
\end{equation}
\begin{itemize}
\item For the first term of \eqref{e:H1-est-test-funct}, the first step is once again to go onto the full domain.
Let us fix a cutoff function $\chi \in C^{\infty}(\Omega_0)$ which is equal to zero outside of $\tub$ and equal to $1$ on $\tubp$.
We extend $\Delta_{\Gamma_0}\rhop$ onto the full domain through $\Delta_{\Gamma_0}^{\dagger}(\chi^2\phi_0(\rhop))$ and get
\begin{align} \notag
\langle \G^i(\rhop\hw), \Delta_{\Gamma_0}\rhop \rangle
    &= \int_{\Gamma_0}{\Delta_{\Gamma_0}\rhop \, \G^i_0(\rhop\hw)} \\ \notag
    &= \int_{\Omega_0}{\nabla\phi_0(\rhop\hw)\cdot \nabla \Delta_{\Gamma_0}^{\dagger}(\chi^2\phi_0(\rhop))} \\ \label{e:H1-proof-eq1}
    &= \int_{\Omega_0}{\nabla\phi_0(\rhop\hw)\cdot  \Delta_{\Gamma_0}^{\dagger} \nabla(\chi^2\phi_0(\rhop))}
     + \int_{\Omega_0}{\nabla\phi_0(\rhop\hw)\cdot [\nabla, \Delta_{\Gamma_0}^{\dagger}] (\chi^2\phi_0(\rhop))}
\end{align}
One can immediately remark, 
that 
\begin{equation}
\Vert [\nabla, \Delta_{\Gamma_0}^{\dagger}] (\chi^2\phi_0(\rhop))\Vert_{L^2} 
    \leq C\Vert\nabla_{\Gamma_0}^{\perp}\nabla (\chi\phi_0(\rhop)) \Vert_{L^2},
\end{equation}
for some constant $C$ independent of $\phi_0(\rhop)$.
Using this, the last term of \eqref{e:H1-proof-eq1} can be bounded as follows
\begin{equation} \label{e:H1-proof-coerc-term1-bound}
\left|\int_{\Omega_0}{\nabla\phi_0(\rhop\hw)\cdot [\nabla, \Delta_{\Gamma_0}^{\dagger}] (\chi^2\phi_0(\rhop))} \right|
    \leq \tfrac{4C}{\alpha}\Vert\rhop\Vert_{H^{1/2}}^2
     + \tfrac{\alpha}{4} \Vert\nabla_{\Gamma_0}^{\perp}\nabla (\chi\phi_0(\rhop)) \Vert_{L^2}^2.
\end{equation}

Then, we integrate by part, using Lemma \ref{l:ipp-dagger}, to distribute the derivatives:
\begin{equation}
\int_{\Omega_0}{\nabla\phi_0(\rhop\hw)\cdot  \Delta_{\Gamma_0}^{\dagger} \nabla(\chi^2\phi_0(\rhop))}
    = -\int_{\Omega_0}{\nabla_{\Gamma_0}^{\dagger}\nabla\phi_0(\rhop\hw)\cdot  \nabla_{\Gamma_0}^{\dagger} (\chi^2\nabla\phi_0(\rhop))}.
\end{equation}
We use once again the extension $e(\hw)$ of $\hw$ that we introduced in the proof of Lemma \ref{p:energy-estimates}.
We write $\nabla\phi_0(\rhop\hw)$ as a sum of commutators plus $e(\hw)\nabla\phi_0(\rhop)$, term that will be easier to bound by below later on.
This time, since $\psi$ is more regular, $e(\hw)$ lies in $W^{3,\infty}(\Omega_0)$, we recall that it also verifies $e(\hw)\geq \alpha$.
Then we write:
\begin{equation} \label{e:H1-est-commut-decomp}
\nabla\phi(\rhop\hw) 
   = e(\hw)\nabla_{\Gamma_0}^{\dagger}\nabla\phi_0(\rhop) 
    + [\nabla_{\Gamma_0}^{\dagger}\nabla,e(\hw)]\phi_0(\rhop)
    + \nabla_{\Gamma_0}^{\dagger}\nabla\tilde{\phi}(\rhop,\hw),
\end{equation}
where $\tilde{\phi}$ is defined through:
\begin{equation}
\tilde{\phi}(\rhop,\hw) 
    := \phi_0(\rhop\hw)
     - e(\hw)\phi_0(\rhop). 
\end{equation}
We have now three terms to control
\begin{itemize}
\item The first one, involving $e(\hw)\nabla_{\Gamma_0}^{\dagger}\nabla\phi_0(\rhop)$ is actually positive, up to a commutator 
\begin{equation*}
\int_{\Omega_0}{e(\hw)\nabla_{\Gamma_0}^{\dagger}\nabla\phi_0(\rhop) \cdot \nabla_{\Gamma_0}^{\dagger}\nabla(\chi^2\phi_0(\rhop))}
    = \int_{\Omega_0}{e(\hw)\nabla_{\Gamma_0}^{\dagger}\nabla(\chi\phi_0(\rhop)) \cdot \nabla_{\Gamma_0}^{\dagger}\nabla(\chi\phi_0(\rhop))}
    + R(\chi,\phi_0(\rho),e(\hw)),
\end{equation*}
with 
\begin{equation}
|R(\chi,\phi_0(\rho),e(\hw))|
    \leq C \Vert e(\hw)\Vert_{L^{\infty}} \Vert \phi_0(\rhop)\Vert_{H^1} \Vert \nabla_{\Gamma_0}^{\dagger}\nabla(\chi \phi_0(\rhop))\Vert_{L^2},
\end{equation}
for some constant $C>0$ independant of $\phi_0(\rhop)$ and $e(\hw)$.
We bound it by below, using Lemma \ref{l:coerc-nab-dagger-nab}, as well as the assumption we made on $e(\hw)$:
\begin{align} \notag
\int_{\Omega_0}{e(\hw)\nabla_{\Gamma_0}^{\dagger}\nabla(\chi\phi_0(\rhop)) \cdot \nabla_{\Gamma_0}^{\dagger}\nabla(\chi\phi_0(\rhop))}
    &= \int_{\Omega_0}{e(\hw)\left|\nabla_{\Gamma_0}^{\dagger}\nabla\phi_0(\rhop)\right|^2} \\ \notag
    &\geq  \alpha \int_{\Omega_0}{\left|\nabla_{\Gamma_0}^{\dagger}\nabla\phi_0(\rhop)\right|^2} \\ \label{e:H1-proof-coerc-term2-bound}
    &\geq c\alpha \Vert\rhop\Vert_{H^{3/2}}^2 - C \Vert\rhop\Vert_{L^2}^2
\end{align}
\item The second one, involving 
$[\nabla_{\Gamma_0}^{\dagger}\nabla,e(\hw)]\phi_0(\rhop)$, which is a commutator, can be bounded accordingly.
One has 
\begin{equation}
\Vert [\nabla_{\Gamma_0}^{\dagger}\nabla,e(\hw)]\phi_0(\rhop)\Vert_{L^2}
    \leq \Vert \phi_0(\rhop)\Vert_{H^1},
\end{equation}
and therefore
\begin{equation} \label{e:H1-proof-coerc-term3-bound}
\left|\int_{\Omega_0}{[\nabla_{\Gamma_0}^{\dagger}\nabla,e(\hw)]\phi_0(\rhop)\cdot \nabla_{\Gamma_0}^{\dagger}\nabla\phi_0(\rhop)} \right|
    \leq \tfrac{4C}{\alpha}\Vert\rhop\Vert_{H^{1/2}}^2
     + \tfrac{\alpha}{4} \Vert\nabla_{\Gamma_0}^{\perp}\nabla (\chi^2\phi_0(\rhop)) \Vert_{L^2}^2.
\end{equation}
\item The third one, involving $\nabla_{\Gamma_0}^{\dagger}\nabla\tilde{\phi}(\rhop,\hw)$ is also a commutator term in a way.
To control it we notice that $\tilde{\phi}$ is solution of the following equation: 
\begin{align*}
\Delta\tilde{\phi}(\rhop,\hw)
    &= -\Delta (e(\hw)\phi_0(\rhop)) \\
    &= -2\nabla\phi_0(\rhop)\cdot\nabla e(\hw)
     - \phi_0(\rhop) \Delta e(\hw).
\end{align*}
Moreover $\tilde{\phi}(\rhop,\hw)_{|\Gamma_0}=0$, 
hence by elliptic regularity, one gets:
\begin{equation}
\Vert \tilde{\phi}(\rhop,\hw)\Vert_{H^2}
    \leq C_0\Vert e(\hw)\Vert_{W^{2,\infty}} \Vert \phi_0(\rhop)\Vert_{H^1},
\end{equation}
and therefore
\begin{equation} \label{e:H1-proof-coerc-term4-bound}
\left|\int_{\Omega_0}{\nabla_{\Gamma_0}^{\dagger}\nabla \tilde{\phi}(\rhop,\hw)\cdot \nabla_{\Gamma_0}^{\dagger}\nabla\phi_0(\rhop)} \right|
    \leq \tfrac{4C}{\alpha}\Vert\rhop\Vert_{H^{1/2}}^2
     + \tfrac{\alpha}{4} \Vert\nabla_{\Gamma_0}^{\perp}\nabla (\chi^2\phi_0(\rhop)) \Vert_{L^2}^2.
\end{equation}
\end{itemize}

By combining \eqref{e:H1-proof-coerc-term1-bound}, \eqref{e:H1-proof-coerc-term2-bound}, \eqref{e:H1-proof-coerc-term3-bound} and \eqref{e:H1-proof-coerc-term4-bound}, we get 
\begin{equation} \label{e:H1-est-coerc-term}
\langle \G^i_0(\rhop \hw), -\Delta_{\Gamma_0}\rhop\rangle
    \geq \tfrac{\alpha}{4} \Vert\nabla_{\Gamma_0}^{\perp}\nabla \phi_0(\rhop) \Vert_{L^2}^2
     - C\Vert \rhop\Vert_{H^1}^2.
\end{equation}

\item For the second term of \eqref{e:H1-est-test-funct}, we use the fact that 
\begin{equation*}
\nabla_{\Gamma_0} \left(\frac{1}{2}|\nabla_{\Gamma_0} \rhop|^2\right)
    = \Delta_{\Gamma_0}\rhop \, \nabla_{\Gamma_0}\rhop,
\end{equation*}
to put all the derivatives on $\Ve$.
Namely:
\begin{align*}
\langle \div(\rhop \Ve), \Delta_{\Gamma_0}\rhop \rangle
    &= \int_{\Gamma_0}{\Ve\cdot\nabla_{\Gamma_0}\rhop\, \Delta_{\Gamma_0}\rhop}
     + \int_{\Gamma_0}{\div_{\Gamma_0}(\Ve)\rhop\Delta_{\Gamma_0}\rhop} \\ 
    &= \int_{\Gamma_0}{\Ve\cdot \nabla_{\Gamma_0}\left(\tfrac{|\nabla_{\Gamma_0}\rhop|^2}{2}\right)}
     - \int_{\Gamma_0}{\nabla_{\Gamma_0}\left(\div_{\Gamma_0}(\Ve)\rhop\right)\cdot \nabla_{\Gamma_0}\rhop} \\
    &= - \frac{1}{2} \int_{\Gamma_0}{\div_{\Gamma_0}(\Ve) |\nabla_{\Gamma_0}\rhop|^2}
     - \int_{\Gamma_0}{\nabla_{\Gamma_0}\left(\div_{\Gamma_0}(\Ve)\rhop\right)\cdot \nabla_{\Gamma_0}\rhop}.
\end{align*}
At the end, we get
\begin{equation} \label{e:H1-est-anti-sym-term}
\left| \langle \div(\rhop \Ve), \Delta_{\Gamma_0}\rhop \rangle \right|
    \leq 2 \Vert\Ve\Vert_{W^{2,\infty}} \Vert\rhop\Vert_{H^1}^2.
\end{equation}
\item For the third term of \eqref{e:H1-est-test-funct}, we integrate by part on $\Gamma_0$, we have:
\begin{equation}
\langle \tilde{b_0}(\rhop,\psi), \Delta_{\Gamma_0}\rhop \rangle
     = \langle \nabla_{\Gamma_0}\tilde{b_0}(\rhop,\psi), \nabla_{\Gamma_0}\rhop \rangle.
\end{equation}
Since $\tilde{b_0}$ is of order $0$ with respect to $\rhop$, 
\begin{equation*}
\Vert \tilde{b_0}(\rhop,\psi)\Vert_{H^1} \leq \Vert \rhop\Vert_{H^1}.
\end{equation*}
Therefore
\begin{equation} \label{e:H1-est-bounde-term}
\left|\langle \tilde{b_0}(\rhop,\psi), \Delta_{\Gamma_0}\rhop \rangle\right|
     \leq C \Vert \rhop\Vert_{H^1}^2.
\end{equation}
\end{itemize}
By combining \eqref{e:H1-est-coerc-term}, \eqref{e:H1-est-anti-sym-term} and \eqref{e:H1-est-bounde-term} into \eqref{e:H1-est-test-funct}, and with the help of Lemma \ref{l:coerc-nab-dagger-nab}, we get the wanted inequality.
\end{proof}

\subsection{Leray scheme and well-posedness} \label{ss:wp}

\begin{thm}
Let $\psi\in  L^{\infty}([0,+\infty),W^{3,\infty}(\Gamma_0))$ be such that the function $\hw := (\Id+\A_0)\psi$ is bounded by below by a  positive constant $\hw\geq \alpha>0$.
Let $\rho_1\in H^1(\Gamma_0)$.
Then there exists an unique solution $\rhop$ to \eqref{e:prot-linearised} with initial value $\rhop(0,\cdot) = \rho_1$.
Moreover, this solution verifies, for all $t>0$, the following estimate
\begin{equation} \label{e:energ}
\Vert \rhop(t,\cdot)\Vert_{H^1}^2 
   \leq \exp(Ct)\Vert \rho_1\Vert_{H^1}^2.
\end{equation}
\end{thm}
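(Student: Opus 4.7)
The plan is to construct solutions by a Leray-type vanishing-viscosity regularization, derive uniform a priori estimates from the coercivity inequalities of Propositions~\ref{p:energy-estimates} and~\ref{p:H1-estimates}, and pass to the limit. The bound \eqref{e:energ} and uniqueness will both follow from these same inequalities via Gr\"onwall's lemma.

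I would first introduce, for small $\varepsilon>0$, the regularized problem
\begin{equation*}
\partial_t \rhop_{\varepsilon} + \A_1(\rhop_{\varepsilon},\psi) - \varepsilon \Delta_{\Gamma_0}\rhop_{\varepsilon} = 0,
\qquad \rhop_{\varepsilon}(0,\cdot) = \rho_1.
\end{equation*}
The added Laplacian is elliptic of order $2$ while $\A_1$ is of order $1$, so the regularized operator is parabolic and admits a unique smooth solution on $[0,T]$ for any $T>0$. This can be obtained either by a Galerkin scheme on the eigenbasis of $-\Delta_{\Gamma_0}$, or via the semigroup generated by $\varepsilon\Delta_{\Gamma_0}$ with $\A_1$ treated as a lower-order perturbation in a Duhamel formulation.

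The next step is to test the regularized equation successively against $\rhop_{\varepsilon}$ and against $-\Delta_{\Gamma_0}\rhop_{\varepsilon}$. Proposition~\ref{p:energy-estimates} handles the first duality and gives
\begin{equation*}
\tfrac{1}{2}\tfrac{d}{dt}\Vert \rhop_{\varepsilon}\Vert_{L^2}^2 + c\alpha\Vert\rhop_{\varepsilon}\Vert_{H^{1/2}}^2 + \varepsilon\Vert\nabla_{\Gamma_0}\rhop_{\varepsilon}\Vert_{L^2}^2 \leq C\Vert\rhop_{\varepsilon}\Vert_{L^2}^2,
\end{equation*}
while Proposition~\ref{p:H1-estimates} yields the analogous inequality in $H^1$, with coercive contribution $c\alpha\Vert\rhop_{\varepsilon}\Vert_{\dot{H}^{3/2}}^2$. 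Summing the two and applying Gr\"onwall's lemma produces the uniform estimate \eqref{e:energ} on $\rhop_{\varepsilon}$, together with a uniform bound in $L^2([0,T],H^{3/2}(\Gamma_0))$. Reading off $\partial_t\rhop_{\varepsilon}$ from the equation then provides a uniform bound in $L^2([0,T],H^{-1/2}(\Gamma_0))$.

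For the passage to the limit, Aubin--Lions compactness extracts a subsequence $\rhop_{\varepsilon_n}\to \rhop$ converging strongly in $L^2([0,T],H^1(\Gamma_0))$ and weakly-$\ast$ in $L^{\infty}([0,T],H^1(\Gamma_0))$. Since $\A_1$ is linear and continuous from $H^{3/2}$ to $H^{1/2}$ and $\varepsilon_n\Delta_{\Gamma_0}\rhop_{\varepsilon_n}\to 0$ in $L^2([0,T],H^{-1/2})$, the limit $\rhop$ solves \eqref{e:prot-linearised} with initial datum $\rho_1$ and inherits \eqref{e:energ} by lower semi-continuity of the norms. Uniqueness is then immediate: applying Proposition~\ref{p:energy-estimates} to the difference of two solutions sharing the same initial datum yields a closed $L^2$-differential inequality with zero initial value, and Gr\"onwall forces this difference to vanish.

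The main obstacle I foresee is ensuring that Proposition~\ref{p:H1-estimates}, stated for $\rhop\in C^{\infty}(\Gamma_0)$, may legitimately be applied to the smooth regularized solutions $\rhop_{\varepsilon}$ and that the resulting estimate survives the passage to the limit. This should follow by density together with the continuity in $\rhop$ of the various bilinear forms appearing in the proof of that proposition, once one checks that none of the constants $C$ degenerates as $\varepsilon\to 0$ (which is visible from the statements, since $\varepsilon$ does not enter the right-hand sides of Propositions~\ref{p:energy-estimates} and~\ref{p:H1-estimates}). A secondary point requiring care is verifying that the regularization does not spoil the antisymmetric structure of $\div(\rhop\Ve)$ that was exploited in the $H^1$ estimate, but this is automatic because $-\varepsilon\Delta_{\Gamma_0}$ is itself self-adjoint and non-negative.
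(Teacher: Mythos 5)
Your proposal is correct in substance and reaches the result through the same two pillars as the paper -- the G\r{a}rding inequalities of Propositions~\ref{p:energy-estimates} and~\ref{p:H1-estimates} combined with Gr\"onwall, and uniqueness via the $L^2$ inequality applied to the difference of two solutions -- but the approximation scheme is genuinely different. The paper uses a Galerkin scheme: it projects \eqref{e:prot-linearised} onto $E_n=\mathrm{Vect}\{v_1,\dots,v_n\}$ where the $v_k$ are eigenfunctions of $\Delta_{\Gamma_0}$, so that existence of the approximants reduces to a linear ODE in $\R^n$, the approximants are automatically $C^{\infty}$ (so Proposition~\ref{p:H1-estimates} applies verbatim), and the crucial identity $\langle P_n\A_1(P_nr_n,\psi),\Delta_{\Gamma_0}r_n\rangle=\langle\A_1(r_n,\psi),\Delta_{\Gamma_0}r_n\rangle$ holds because $P_n$ commutes with $\Delta_{\Gamma_0}$; the limit is then taken by Banach--Alaoglu alone, weak convergence being enough for a linear equation. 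Your vanishing-viscosity route trades these conveniences for two extra tasks that you correctly identify: you must first establish well-posedness of the regularized parabolic problem (which itself requires a semigroup or Galerkin argument, so the gain in simplicity is partly illusory), and you must extend Proposition~\ref{p:H1-estimates} from $C^{\infty}$ to the actual regularity of $\rhop_{\varepsilon}$ -- note that with $\psi$ only in $W^{3,\infty}$ the coefficients of $\A_1$ have finite regularity, so $\rhop_{\varepsilon}$ will not be smooth; what saves you is that $\rhop_{\varepsilon}\in L^2_tH^2$ and both sides of the $H^1$ inequality are continuous on $H^{3/2}(\Gamma_0)$, so the density argument you sketch does close. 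The Aubin--Lions step is more than you need (weak convergence suffices for the linear passage to the limit, as in the paper), but it is not wrong. In exchange, your scheme avoids entirely the projected operator $P_n\A_1P_n$ and the (slightly terse in the paper) justification that it converges to $\A_1$, and the observation that $-\varepsilon\Delta_{\Gamma_0}$ is self-adjoint and nonnegative under both test functions is exactly the right reason the regularization is harmless in both estimates.
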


\begin{rem}
Having a sign condition, similar to $\hw\geq \alpha>0$, is quite common in fluid dynamics.
For example the Rayleigh-Taylor criterion state that the gravity has to be downward for the water-waves equation to be well-posed (see \cite{Lannes}).
The same criterion also applies in the case of the Muskat equation (see \cite{Cordoba-Muskat-WPIP}), which is very close to ours.
Also, the authors in \cite{Cellule1} had a similar condition for well-posedness.
\end{rem}

\begin{proof}
We introduce $(v_n)_n$ an Hilbertian basis of the space $L^2(\Gamma_0)$, with the additional hypothesis that the $v_n$'s are the eigenvectors of the laplacian $\Delta_{\Gamma_0}$ (in particular, each $v_n$ is $C^{\infty}$).
We denote by $P_n$ the projection on $E_n:= \mathrm{Vect}\{v_1,\dots,v_n\}$
\begin{equation}
P_n(f) := \sum_{k=1}^{n}{\langle f,v_k\rangle v_k}.
\end{equation}
Once this is done, we study the projection of \eqref{e:prot-linearised} on $E_n$
\begin{equation} \label{e:prot-lin-approx-n}
\partial_t r_n + P_n \A_1(P_n r_n,\psi) = 0.
\end{equation}

\begin{lem}\label{l:approx-sol-exists}
There exists a unique solution $r_n$ of \eqref{e:prot-lin-approx-n} with initial value $P_n(\rho_1)$ and this solution verifies the inequality 
\begin{equation} \label{e:approx-energ}
\forall t>0, \quad
\Vert r_n(t,\cdot)\Vert_{H^1}^2 
   + c\alpha \int_{0}^{t}{\Vert r_n(s,\cdot)\Vert_{\dot{H}^{3/2}}^2 \exp(C(t-s)) \mathrm{d}s}
   \leq \exp(Ct)\Vert \rho_1\Vert_{H^1}^2.
\end{equation}
\end{lem}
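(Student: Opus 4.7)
The plan splits into two independent parts: the existence and uniqueness of $r_n$, then the energy estimate \eqref{e:approx-energ}.

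For the first part, since $(v_k)_{k\leq n}$ is a finite basis of $E_n$, the equation \eqref{e:prot-lin-approx-n} reduces to a linear ODE on $\R^n$. Expanding $r_n(t) = \sum_{k=1}^{n} c_k(t) v_k$ and projecting \eqref{e:prot-lin-approx-n} onto each $v_k$, one obtains a linear system $\dot{c}(t) + M(t) c(t) = 0$ with entries $M_{kj}(t) := \langle \mathcal{A}_1(v_j,\psi(t,\cdot)), v_k\rangle$. Since $\psi \in L^\infty([0,+\infty),W^{3,\infty}(\Gamma_0))$ and each $v_j$ is smooth, the coefficient matrix $M$ is bounded in $t$, so the linear Cauchy--Lipschitz theorem yields a unique global solution $r_n$ with initial datum $P_n \rho_1$.

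For the energy estimate, the key structural ingredient is that $P_n$ is self-adjoint on $L^2(\Gamma_0)$ and commutes with $\Delta_{\Gamma_0}$ on $E_n$, precisely because the $v_k$ are eigenvectors of $\Delta_{\Gamma_0}$. Combined with $P_n r_n = r_n$ and $-\Delta_{\Gamma_0} r_n \in E_n$, this allows us to strip $P_n$ from $P_n \mathcal{A}_1(r_n,\psi)$ when testing against $r_n$ or against $-\Delta_{\Gamma_0} r_n$. Since $r_n(t,\cdot)$ is a finite linear combination of the $v_k$, hence $C^{\infty}(\Gamma_0)$, Propositions \ref{p:energy-estimates} and \ref{p:H1-estimates} both apply and, once added together, produce the Gärding-type differential inequality
\begin{equation*}
\tfrac{1}{2}\tfrac{d}{dt} \Vert r_n\Vert_{H^1}^2 + c\alpha\, \Vert r_n\Vert_{\dot{H}^{3/2}}^2 \leq C\, \Vert r_n\Vert_{H^1}^2,
\end{equation*}
with constants $c$ and $C$ independent of $n$ (they depend only on $\Omega_0$, $\alpha$ and $\Vert \psi\Vert_{L^\infty W^{3,\infty}}$).

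A Gronwall-type manipulation closes the estimate: multiplying the previous inequality by $\exp(-2Ct)$ and integrating from $0$ to $t$ yields
\begin{equation*}
\Vert r_n(t,\cdot)\Vert_{H^1}^2 + 2c\alpha \int_{0}^{t} \exp(2C(t-s))\, \Vert r_n(s,\cdot)\Vert_{\dot{H}^{3/2}}^2 \, \mathrm{d}s \leq \exp(2Ct)\, \Vert P_n \rho_1\Vert_{H^1}^2 \leq \exp(2Ct)\, \Vert \rho_1\Vert_{H^1}^2,
\end{equation*}
which is exactly \eqref{e:approx-energ} after absorbing a factor $2$ into the constants $c$ and $C$. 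The main point that requires care is the commutation $P_n \Delta_{\Gamma_0} = \Delta_{\Gamma_0} P_n$: without the specific choice of a Laplace-eigenvector basis, testing against $-\Delta_{\Gamma_0} r_n$ would generate a commutator $[\mathcal{A}_1(\cdot,\psi), P_n]$ that I do not see how to control uniformly in $n$, so this choice of basis is essential rather than cosmetic.
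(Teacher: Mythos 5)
Your proposal is correct and follows essentially the same route as the paper: reduction to a linear ODE with bounded coefficients for existence and uniqueness, then stripping $P_n$ using self-adjointness and the commutation with $\Delta_{\Gamma_0}$ (afforded by the eigenvector basis) so that Propositions \ref{p:energy-estimates} and \ref{p:H1-estimates} apply to the smooth function $r_n$, followed by Gronwall. Your remark that the Laplace-eigenvector basis is essential rather than cosmetic is exactly the point the paper relies on, and your explicit Gronwall computation is a welcome addition to the paper's terser treatment.
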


\begin{proof}[Proof of Lemma \ref{l:approx-sol-exists}]
We write 
\begin{equation}
r_n(t,x) := \sum_{k=1}^{n}{a_{n,k}(t) v_k(x)},
\end{equation}
where the coefficient $a_{n,k}$ are still to be computed.

The equation \eqref{e:prot-lin-approx-n} can then be written in terms of the coefficients $(a_{n,k})_k$ as:
\begin{equation}
\forall k,\in [\![1,n ]\!], \quad
a_{n,k}'(t) = - \sum_{j=1}^{n}{\langle \A_1(v_j,\psi(t,\cdot)), v_k \rangle \, a_{n,j}(t)}.
\end{equation}
This is a linear ODE in $\R^n$, with bounded coefficients and therefore admits a unique global solution.

We have
\begin{equation}
\langle P_n \A_1(P_n r_n,\psi), r_n \rangle = \langle \A_1(r_n,\psi),r_n\rangle,
\end{equation}
and moreover, utilising the fact that the $P_n$ commutes with $\Delta_{\Gamma_0}$ (due to $(v_n)_n$ being a basis of eigenvalues of $\Delta_{\Gamma_0}$), we have also:
\begin{equation}
\langle P_n \A_1(P_n r_n,\psi), \Delta_{\Gamma_0} r_n \rangle 
    = \langle \A_1(r_n,\psi),\Delta_{\Gamma_0}r_n\rangle.
\end{equation}
Now since the function $r_n$ is $C^{\infty}_x$, we can apply Propositions \ref{p:energy-estimates} and \ref{p:H1-estimates} to get:
\begin{equation}
\tfrac{1}{2}\tfrac{d}{dt}\left(\Vert\rhop\Vert_{H^1}^2\right)
    + c\alpha \Vert\rhop\Vert_{H^{3/2}}^2
    \leq C\Vert\rhop\Vert_{H^1}^2.
\end{equation}
Then, by Gronwall's inequality, we get \eqref{e:approx-energ}.
\end{proof}

Let us fix $T>0$.
Now, since $\Vert r_n\Vert_{L^{2}([0,T], \dot{H}^{3/2}(\Gamma_0)}$ is bounded, by Banach-Alaoglu theorem, the sequence $(r_n)_n$ does converge weekly to a function $\rhop$ in $L^2([0,T], H^{3/2}(\Gamma_0))$ up to extraction.
And since $\A_1$ is linear, with its adjoint $\A_1$ well defined in $H^1$, with value in $L^2$, the function $\rhop$ is a solution of \eqref{e:prot-linearised} and it verifies \eqref{e:energ}.

Now, let us assume that there exists a second solution $r$ to \eqref{e:prot-linearised}.
Then since both $\rhop$ and $r$ lie in $L^{2}_tH^1_x$, we can apply Proposition \ref{p:energy-estimates} to $\rhop - r$ and get
\begin{equation}
\tfrac{1}{2}\tfrac{d}{dt}\left(\Vert r-\rhop\Vert_{L2}^2\right)
    + c\alpha \Vert r-\rhop\Vert_{H^{1/2}}^2
    \leq C\Vert r-\rhop\Vert_{L^2}^2.
\end{equation}
But both solutions have the same initial datum $\rho_1$
Therefore $r=\rhop$ and the solution $\rhop$ that we constructed is the only one.
\end{proof}

\begin{rem}
It would seem natural to try to prove the well-posedness in $L^2$ instead of $H^1$. 
The fact that we have natural $L^2$ estimate is a good hint that this property is also true.
However, to use the estimate of Proposition \ref{p:energy-estimates}, one need $\rhop$ to be $H^1$, which is not obvious at first glance.

To conclude with the kind of $L^2$ estimate that we have, for a linear equation, one can (and most of the time, it is what people do) prove that "every weak solution is a limit of regular solutions". 
Then due to the regular solution verifying the estimates, at the limit, the weak solution also does, which as we have seen above, prove uniqueness.

Now, we need to find a sequence of solution $r_{\varepsilon}$ that converges toward our weak solution $\rhop$ and here it gets tricky.
For most equations, a good candidate for such a sequence is to take a Fourier mollified sequence $r_{\varepsilon} := J_{\varepsilon}\rhop$.
But in this case, one would need to prove that the mollifier $J_{\varepsilon}$ converges nicely with $\G^i_0$ at low regularity, which we did not succeed in doing.

 Since the equation involves the Dirichlet-to-Neumann operator, one could also take $\Pi_n$ the projection onto the first $n$ eigenvectors of this operator, but then we are left with a commutator between this projection and $\div(\cdot V^e)$.

A maybe more suitable approach would be to say that we have not only a good estimate for Gronwall but also dissipation through $\Vert \rhop\Vert_{H^{1/2}}^2$.
So one could try to use the theory for accretive operator and regularise $\A_1$ using $(\lambda\Id +\A_1)^{-1}$.
We refer to \cite{Pazy}, Chapter 5, Paragraph 6 for the reader eager to use such an argument.
However, in our case, the regularisation does depend on $t$ and one need to control $\partial_t\A_1(\lambda\Id +\A_1)^{-1}$.
But the regularisation grants us only $1/2$ derivative, whereas the operator itself cost $1$ derivative, so one cannot conclude this method as well.
\end{rem}


\section{Perspectives}


The next obvious step is to treat the full equation \eqref{e:prot-equa} and not just the linearised equation around $0$.
We believe this can be done through a quasilinearisation method.
For example, one could write the equation for $\rhop = \partial_t\rho$, obtained by derivating \eqref{e:prot-equa} with respect to time
\begin{equation} \label{e:scheme-fin}
\partial_t \rhop 
    - d_{\rho}\A(\rhop)(|\Nor|\psi) 
    - \A[\rho]( \partial_t |\Nor|  \, \psi)
    = \A[\rho](|\Nor| \partial_t\psi).
\end{equation}
This equation, for $\rho$ fixed, is more or less the linearised equation \eqref{e:prot-linearised}.
With good enough estimates, one could attempt to construct a solution through the scheme :
\begin{itemize}
\item $\rho^0=0$,
\item $\rhop^n$ is the solution of \eqref{e:scheme-fin} for $\rho = \rho^{n-1}$,
\item $\rho^n = \rho_0 + \int_{0}^{t}{\rhop^n(s,\cdot) ds}$.
\end{itemize}
The difficulty with such a scheme is to ensure that, at each point in the algorithm we have good estimates on $\rhop^n$ and $\rho^n$.
However, our analysis of the equation is not powerful enough here.
We treated the linearized equation around zero, and one could treat similarly the equation linearised around any smooth function.
However, since $\rhop^n$ is only guaranteed to be $H^1$, $\rho^n$ is of this regularity as well.
So there is still much to do in order to treat the well-posedness of~\eqref{e:prot-equa}.

\ \par \ 

\paragraph{\textbf{Acknowledgements.}}
The author is partially supported by 
the Agence Nationale de la Recherche,
Project SINGFLOWS, 
ANR-18-CE40-0027-01.
The author warmly thank David Lannes for the careful reading and advises,
and Clair Poignard for the interesting discussions on the subject.

\end{document}